\newcommand{\N}{\mathbb{N}}
\newcommand{\R}{\mathbb{R}}
\newcommand{\C}{\mathbb{C}}
\newcommand{\K}{\mathbb{K}}
\renewcommand{\P}{\mathbb{P}}
\newcommand{\E}{\mathbb{E}}
\newcommand{\PP}{\mathbb{P}}
\newcommand{\ce}{\coloneqq}
\newcommand{\ec}{\eqqcolon}
\newcommand{\1}{\mathbf{1}}
\newcommand{\calB}{\mathcal{B}}
\newcommand{\calF}{\mathcal{F}}
\newcommand{\calL}{\mathcal{L}}
\newcommand{\calP}{\mathcal{P}}
\newcommand{\filtrF}{\mathscr{F}}
\newcommand{\rmd}{\mathrm{d}}
\newcommand{\hra}{\hookrightarrow}
\DeclareMathOperator{\TextRe}{Re}
\newcommand{\seq}{\subseteq}
\newcommand{\ve}{\varepsilon}
\newcommand{\vp}{\varphi}
\newcommand{\ee}{\mathrm{e}}
\newcommand{\from}{\colon}
\newcommand{\n}{\|}
\newcommand{\one}{{\bf 1}}
\renewcommand{\ss}{^\star}
\newcommand{\nn}{|\!|\!|}
\newcommand{\Om}{\Omega}
\newcommand{\om}{\omega}
\newcommand{\F}{\mathcal{F}}
\newcommand{\gHX}{{\gamma(H,X)}}
\newcommand{\CF}{C_{F}}
\newcommand{\CG}{C_{G}}
\newcommand{\ds}{\,\mathrm{d}s}
\newcommand{\dWHs}{\,\mathrm{d}W_H(s)}
\newcommand{\LF}{L_{F}}
\newcommand{\LG}{L_{G}}
\newcommand{\LFm}{L_{F,m}}
\newcommand{\LGm}{L_{G,m}}
\newcommand{\iu}{\mathrm{i}} 
\newcommand{\gHY}{{\gamma(H,Y)}}
\newcommand{\li}{\leftidx} 
\newcommand{\mU}{\leftidx{_m}{U}} 
\newcommand{\muo}{\li{_m}u_0}
\newcommand{\mF}{\li{_m}F}
\newcommand{\mG}{\li{_m}G}
\newtheorem{Satz}{Satz}[section]
\newtheorem{definition}[Satz]{Definition} 
\newtheorem{theorem}[Satz]{Theorem}
\newtheorem{lemma}[Satz]{Lemma}	
\newtheorem{proposition}[Satz]{Proposition}
\newtheorem{corollary}[Satz]{Corollary}
\newtheorem{remark}[Satz]{Remark}
\newtheorem{assumption}[Satz]{Assumption}
\numberwithin{equation}{section}
\begin{document}

\title[Temporal approximation of SPDEs with irregular nonlinearities]{Temporal approximation of stochastic evolution equations with irregular nonlinearities}
\author{Katharina Klioba}\address{
Hamburg University of Technology,
Institute of Mathematics,  D-21073 Hamburg, Germany}
\email{Katharina.Klioba@tuhh.de}

\author{Mark Veraar}
\address{Delft Institute of Applied Mathematics\\
Delft University of Technology \\ P.O. Box 5031\\ 2600 GA Delft\\The
Netherlands} \email{M.C.Veraar@tudelft.nl}

\date{\today}

\thanks{The second author is supported by the VICI subsidy VI.C.212.027 of the Netherlands Organisation for Scientific Research (NWO)}

\keywords{time discretisation schemes, pathwise uniform convergence, low regularity, SPDEs, stochastic convolutions}

\subjclass[2020]{Primary: 65C30; Secondary: 47D06, 60H15, 65J08, 65M12, 47B01}

\begin{abstract}
In this paper, we prove convergence for contractive time discretisation schemes for semi-linear stochastic evolution equations with irregular Lipschitz nonlinearities, initial values, and additive or multiplicative Gaussian noise on $2$-smooth Banach spaces $X$. The leading operator $A$ is assumed to generate a strongly continuous semigroup $S$ on $X$, and the focus is on non-parabolic problems. The main result concerns convergence of the {\em uniform strong error}
\[{\rm E}_{k}^{\infty} \coloneqq \Big(\E \sup_{j\in \{0, \ldots, N_k\}} \|U(t_j) - U^j\|_X^p\Big)^{1/p} \to 0\quad (k \to 0),\]
where $p \in [2,\infty)$, $U$ is the mild solution, $U^j$ is obtained from a time discretisation scheme, $k$ is the step size, and $N_k = T/k$ for final time $T>0$. This generalises previous results to a larger class of admissible nonlinearities and noise, as well as rough initial data from the Hilbert space case to more general spaces. We present a proof based on a regularisation argument. Within this scope, we extend previous quantified convergence results for more regular nonlinearity and noise from Hilbert to $2$-smooth Banach spaces. The uniform strong error cannot be estimated in terms of the simpler {\em pointwise strong error}
\[{\rm E}_k \coloneqq \bigg(\sup_{j\in \{0,\ldots,N_k\}}\E \|U(t_j) - U^{j}\|_X^p\bigg)^{1/p},\]
which most of the existing literature is concerned with.
Our results are illustrated for a variant of the Schrödinger equation, for which previous convergence results were not applicable. 
\end{abstract}

\maketitle

\section{Introduction}

This paper is concerned with the temporal discretisation of nonlinear stochastic PDEs driven by multiplicative Gaussian noise. We aim at proving convergence of time discretisation schemes for such equations, which can abstractly be written as a stochastic evolution equation of the form
\begin{align}
\label{eq:stEvolEqnIntro}
    \Bigg\{\begin{split} \rmd U &=(A U + F(U))\,\rmd t  + G(U) \,\rmd W_H~~~\text{ on } [0,T],\\ U(0) &= u_0 \in L^p(\Omega;X).
\end{split}
\end{align}
Here, $A$ generates a $C_0$-semigroup $(S(t))_{t \ge 0}$ on a $2$-smooth Banach space $X$, $W_H$ is a cylindrical Brownian motion, $T>0$, $p \in [2,\infty)$, and $u_0$ is the initial data. Besides global Lipschitz continuity, no further regularity assumptions are imposed on the nonlinearity $F$ and noise $G$.

Now, our goal is to show pathwise uniform convergence of contractive time discretisation schemes for such irregular nonlinearities and rough initial data, focusing on the hyperbolic setting. It has been extensively studied in recent years (see \cite{AC18, ACLW16, BLM21, berg2020exponential, BC23, CaiCohenWang2023FullWave, CCHS20, CL22, CLS13, CQS16, cox2019weak, Cui21, CuiHong, HM19, HHS22, JdNJW21, KliobaVeraar23, KLP20, KLL12, KLL13, KLS10, WGT14} and references therein). When passing to the parabolic setting (i.e., $(S(t))_{t\geq 0}$ being an analytic semigroup), regularisation phenomena allow for different proof techniques, resulting in much stronger convergence results. For details on the parabolic case, we refer to \cite{ACQ20, BL13, BeJe19, BHRR, CoxNee13, GyMi09, JK09, JK11, KamBlo, KLL15, KLL18, Kruse, LPS14} and references therein.  

\subsection{The setting}

The error usually considered in the above-mentioned literature on the hyperbolic case (as well as in the parabolic case) is the {\em pointwise strong error}
\begin{align}\label{eq:pointwiseerrorIntro}
    \sup_{j\in \{0,\ldots, N_k\}}\E\|U(t_j) - U^j\|^p,
\end{align}
where $U$ denotes the mild solution to \eqref{eq:stEvolEqnIntro} and $(U^j)_{j=0}^{N_k}$ an approximation thereof. The latter is recursively determined by $U^0 = u_0$,
\begin{equation}
\label{eq:UjschemeIntro}
    U^{j} = R_k U^{j-1} + k R_k F(U^{j-1})+ R_k G(U^{j-1}) \Delta W^{j}, \ \ j=1, \ldots, N_k,
\end{equation}
for some time discretisation scheme $R_k \in \calL(X)$ that is an approximation of the semigroup $S$ at time $k>0$. Here, $N_k = T/k$ is the number of time grid points, $k = t_j-t_{j-1}$ is the uniform step size, $t_j=jk$, and $\Delta W^{j} = W_H(t_j) - W_H(t_{j-1})$. 

It is a natural question to find convergence rates for
the {\em uniform strong error}
\begin{align}\label{eq:uniformerrorIntro}
    \E \sup_{j\in \{0, \ldots, N_k\}} \|U(t_j) - U^j\|^p,
\end{align}
since it describes moments of the maximal error in time rather than the maximum in time of moments of the error. It can, however, not be controlled by the simpler pointwise strong error \eqref{eq:pointwiseerrorIntro} without a loss in the rate. Indeed, if the pointwise strong error converges at rate $\alpha>0$, i.e., \eqref{eq:pointwiseerrorIntro} is bounded by $CN^{-\alpha p}$ for some $C>0$, then
\begin{align}
\label{eq:easyEstPointwiseStrong}
    \E \sup_{j \in \{1,\ldots,N\}} \|U(t_j)-U^j\|^p &\leq \E \sum_{j=1}^N \|U(t_j)-U^j\|^p = \sum_{j=1}^N \E  \|U(t_j)-U^j\|^p \nonumber\\
    &\leq N \sup_{j \in \{1,\ldots,N\}}  \E  \|U(t_j)-U^j\|^p \leq CN^{1-\alpha p}.
\end{align}
Taking $p$-th roots, convergence at rate $\alpha-\frac{1}{p}$ is obtained.
Since we have arbitrarily slow rates $\alpha \in (0,\frac{1}{2}]$ and are also interested in the case $p=2$, this estimate is not strong enough for our purposes. Still, convergence of the whole path can be expected, as numerical simulations suggest \cite{AC18, cohenMaxwell,  Wang15}.
Estimates where the supremum is inside the expectation are usually referred to as {\em maximal estimates}, and ample literature is available on this for general stochastic processes \cite{Talagrand}.

In \cite{KliobaVeraar23}, convergence rates for \eqref{eq:uniformerrorIntro} were obtained for general contractive time discretisation schemes under structural assumptions on the nonlinearity $F$ and the noise $G$ in \eqref{eq:stEvolEqnIntro}. Namely, it was assumed that they preserve the spatial regularity of the argument in the sense of the mapping properties $F:Y \to Y$, $G: Y \to \calL_2(H,Y)$ for a proper subspace $Y \hra X$ with additional smoothness, where $\calL_2(H,Y)$ denotes the space of Hilbert-Schmidt operators. Moreover, $F$ and $G$ were assumed to be of linear growth on $Y$ and the initial data to be of the same additional regularity. Naturally, the question arises whether these assumptions can be relaxed. Clearly, we cannot expect to preserve the convergence rate because this already fails in the linear deterministic case. However, it is an open question whether qualitatively, pathwise uniform convergence holds under weaker assumptions on $F$ and $G$ as well as for rough initial data from $L^p(\Omega;X)$. 

The main goal of our paper is to prove that this question can be answered positively, merely assuming global Lipschitz continuity of $F$ and $G$ on the full space $X$. In addition, we show that under the respective conditions, both convergence results with and without rate extend to the more general setting of $2$-smooth Banach spaces (cf. Subsection \ref{subsec:2smoothBanach}) rather than Hilbert spaces.

\subsection{Main result}

Before we can state our main result, we require an additional definition. Let $X$ and $Y$ be $2$-smooth Banach spaces such that $Y \hra X$. For $\alpha\in (0,1]$ we say that a time discretisation scheme $R:[0,T] \to \calL(X), k \mapsto R_k$ {\em approximates $S$ to order $\alpha$ on $Y$} if there is a constant $C_{\alpha} \ge 0$ such that for all $x \in Y$, $k>0$, and $j\in \{0,\ldots,N_k\}$
\begin{equation*}
    \|(S(t_j)-R_k^j)x\|_X \le C_\alpha k^\alpha\|x\|_Y.
\end{equation*}
Such a time discretisation scheme is called {\em contractive} if $\|R_k\|_{\calL(X)} \le 1$.
We denote by $\gamma(H,X)$ the space of $\gamma$-radonifying operators from a Hilbert space $H$ to $X$ (cf. Subsection \ref{subsec:radonifying}), which coincides with the space of Hilbert-Schmidt operators if $X$ is Hilbert. Our main theorem is as follows. 

\begin{theorem}
\label{thm:convergenceIntro}
    Let $X$ be a $2$-smooth Banach space. Suppose that $A$ generates a $C_0$-contraction semigroup $(S(t))_{t \ge 0}$ on both $X$ and $D(A)$. Let $(R_k)_{k>0}$ be a contractive time discretisation scheme on both $X$ and $D(A)$ that approximates $S$ to some order $\alpha \in (0,1]$ on $D(A)$. Suppose that $F\from X \to X$ and $G\from  X \to \gamma(H,X)$ are Lipschitz continuous. Let $T>0$, $p\in [2,\infty)$, and $u_0 \in L^p(\Omega;X)$. Denote by $U$ the mild solution to \eqref{eq:stEvolEqnIntro} on $[0,T]$. Let $k \in (0,T/2]$ and $(U^j)_{j=0,\ldots,N_k}$ be given by \eqref{eq:UjschemeIntro}. Define the piecewise constant extension $\tilde{U}\from [0,T] \to L^p(\Omega;X)$ by $\tilde{U}(t) \ce U^j$ for $t \in [t_j,t_{j+1})$, $0 \le j \le N_k-1$, and $\tilde{U}(T) \ce U^{N_k}$. Then
    \begin{align*}
        \lim_{k \to 0} \bigg\| \sup_{t \in [0,T]} \|U(t)-\tilde{U}(t)\|_{X}\bigg\|_{L^p(\Omega)} =0.
    \end{align*}
\end{theorem}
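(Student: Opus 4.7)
The plan is a regularisation-and-limit argument. The strategy is to approximate $(u_0, F, G)$ by sequences $(u_0^n, F_n, G_n)$ taking values in $D(A)$ and $\gamma(H, D(A))$, solve the regularised equation and discretise it, invoke the quantified pathwise-uniform convergence theorem (extended to $2$-smooth Banach spaces, as advertised in the abstract) for the regularised problem, and then pass to the limit by a diagonal/$\epsilon$-argument using Lipschitz stability of both the continuous and the discrete problems with respect to the data.

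For the regularisation step I would use Steklov-type semigroup averages. Writing $J_n x \ce n \int_0^{1/n} S(s) x \,\rmd s$, the identity $A J_n x = n(S(1/n) x - x)$ shows $J_n \from X \to D(A)$, while strong continuity of $S$ gives $J_n x \to x$ in $X$ for every $x \in X$. Setting $u_0^n \ce J_n u_0$, $F_n \ce J_n \circ F$, and $G_n \ce J_n \circ G$, contractivity of $S$ preserves the Lipschitz constants of $F, G$ as maps into $X$ and $\gamma(H,X)$; furthermore, the identity above shows $F_n, G_n$ are Lipschitz as maps $D(A) \to D(A)$ and $D(A) \to \gamma(H,D(A))$, with a (large, $n$-dependent) constant. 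This is precisely the structural hypothesis required to apply the quantified theorem on $Y = D(A)$, yielding for each fixed $n$
\[
\bigg\|\sup_{j \in \{0,\ldots,N_k\}} \|U_n(t_j)-U_n^j\|_X\bigg\|_{L^p(\Omega)} \le C(n)\, k^{\alpha}\quad (k \to 0),
\]
where $U_n$ and $(U_n^j)$ are the mild solution and discretisation of the problem driven by $(u_0^n, F_n, G_n)$.

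Splitting the quantity of interest by the triangle inequality,
\[
\bigg\|\sup_{t \in [0,T]}\|U(t)-\tilde U(t)\|_X\bigg\|_{L^p(\Omega)} \le A_n + B_n(k) + C_n(k),
\]
with $A_n$ controlling $\sup_t \|U(t)-U_n(t)\|_X$, $B_n(k)$ the regularised discretisation error above, and $C_n(k)$ controlling $\sup_j \|U_n^j - U^j\|_X$, reduces the proof to showing $A_n \to 0$ as $n \to \infty$ and $C_n(k) \to 0$ as $n \to \infty$ uniformly in $k$. The continuous bound $A_n$ is standard: apply the $2$-smooth Banach space form of the Burkholder--Davis--Gundy inequality via $\gamma$-radonifying operators, use Lipschitz continuity of $F, G$ and contractivity of $S$, and close by Gr\"onwall. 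The discrete bound $C_n(k)$ is of the same flavour using a discrete BDG/Doob inequality and contractivity of $R_k$, with constants independent of $k$; the forcing is $\sup_j \|F_n(U^{j-1}) - F(U^{j-1})\|_{L^p(\Omega;X)}$ and its $G$-counterpart, which tend to zero by pointwise convergence $J_n \to I$, common linear growth of $(F_n)$ and $F$, uniform $L^p$-moment bounds on $(U^j)_{j,k}$, and dominated convergence. A diagonal choice---first $n$ large, then $k$ small---completes the argument.

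The main obstacle I expect is the uniform-in-$k$ control of $C_n(k)$: the $D(A)$-Lipschitz constant of $F_n$ blows up with $n$, so none of the constants in the discrete stability estimate may depend on the $D(A)$-data. This forces the stability analysis to be carried out entirely in the $X$-norm using only the common $X$-Lipschitz bounds, with the $2$-smooth geometry of $X$ and contractivity of $R_k$ doing the heavy lifting via $\gamma$-radonifying BDG. A secondary technical point is transferring the quantified result itself from the Hilbert to the $2$-smooth Banach setting, which requires systematically replacing Hilbert--Schmidt estimates on the noise by their $\gamma(H,X)$-counterparts throughout.
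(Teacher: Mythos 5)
Your overall architecture --- regularise $(u_0,F,G)$ into $D(A)$-valued data, apply the quantified convergence theorem to the regularised problem for each fixed $n$, and close with a triangle inequality and a diagonal choice of $n$ and $k$ --- is the same as the paper's, which uses the resolvent regularisation $mR(m,A)$ in place of your Steklov averages $J_n$; this difference is immaterial, since both are contractions mapping into $D(A)$ and converging strongly to the identity. The terms $A_n$ and $B_n(k)$ are handled essentially as you describe (for $A_n$ one also needs the compactness observation below to control $\sup_t\|(J_n-I)F(U(t))\|$, but there it is available).

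There is, however, a genuine gap in your treatment of $C_n(k)$, precisely at the point you flag as the main obstacle. You propose to bound $\sup_j\|U_n^j-U^j\|$ by a discrete stability estimate whose forcing is $\sup_{j}\|F_n(U^{j-1})-F(U^{j-1})\|_{L^p(\Omega;X)}$, and to send this to zero uniformly in $k$ ``by pointwise convergence $J_n\to I$, \dots uniform $L^p$-moment bounds on $(U^j)_{j,k}$, and dominated convergence.'' This does not work: $J_n\to I$ only in the strong operator topology, so $(J_n-I)y\to 0$ uniformly only for $y$ in a compact set. Uniform moment bounds on the discrete iterates give no compactness of the $k$-dependent family $\{F(U^{j-1}):0\le j\le N_k,\ k>0\}$, so no domination argument yields smallness of the forcing uniformly in $k$; without that uniformity the diagonal argument collapses. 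The paper circumvents exactly this by the four-way splitting
\[
F(U^i)-F_n(U_n^i)=\bigl[F(U^i)-F(U(t_i))\bigr]+\bigl[F(U(t_i))-F_n(U(t_i))\bigr]+\bigl[F_n(U(t_i))-F_n(U_n(t_i))\bigr]+\bigl[F_n(U_n(t_i))-F_n(U_n^i)\bigr],
\]
so that the difference $F-F_n$ is only ever evaluated along the continuous solution $U(t_i)$, whose range is a.s.\ compact by path continuity (Lemma \ref{lem:uniformConvSOT}); the price is that the first bracket is $\CF$ times the \emph{full} error $\|U(t_i)-U^i\|$, which re-enters the estimate. Consequently $C_n(k)$ cannot be shown to vanish on its own: one instead obtains an inequality for the full error in terms of itself plus small terms, closed by the discrete Gronwall lemma (Lemma \ref{lem:KruseGronwall}) together with the maximal inequality for discrete convolutions (Lemma \ref{lem:PinelisBound}) for the stochastic part. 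With this modification your argument becomes the paper's proof.
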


Theorem \ref{thm:convergenceIntro} follows from Theorem \ref{thm:convergence}. The pathwise uniform convergence rates in the structured setting with additional regularity, which are required in the proof of the main result, can be found in Theorem \ref{thm:convergenceRate}. 

Among the schemes covered by Theorem \ref{thm:convergenceIntro} and Theorem \ref{thm:convergenceRate} are
\begin{itemize}
\item exponential Euler (EE): $R_k = S(k)$;
\item implicit Euler (IE): $R_k = (1-kA)^{-1}$;
\item Crank-Nicolson (CN): $R_k = (2+kA)(2-kA)^{-1}$.
\end{itemize}
Contractivity of the scheme $R$ on $X$ and $D(A)$ is an immediate consequence of the contractivity of the semigroup $(S(t))_{t \ge 0}$ in the cases (EE) and (IE). For (CN) and other commonly used schemes, an argument based on functional calculus yields the desired contractivity. Applications to the Schr\"odinger equation are contained in the main paper (see Section \ref{sec:example}), showing pathwise uniform convergence for general contractive schemes in a setting in which previous results on (EE) \cite[Thm.~4.3]{AC18} or other contractive schemes \cite[Thm.~6.13]{KliobaVeraar23} are not applicable.

Naturally, in addition to the temporal discretisation investigated here, a space discretisation is needed for a numerical solution of \eqref{eq:stEvolEqnIntro}. In this paper, the focus lies on the temporal discretisation in the global Lipschitz setting, since a detailed understanding thereof is a quintessential step towards the treatment of locally Lipschitz nonlinearities. Our result should be seen as a first step, and we plan to continue our work on uniform strong errors in a locally Lipschitz setting in the near future.

\subsection{Method of proof}

Previous results on pathwise uniform convergence are only applicable if nonlinearity and noise preserve additional regularity, are of linear growth on the space with additional regularity, and the initial data are pathwise more regular as well. To circumvent the problem that this is not the case in our setting, we regularise the nonlinearity, the noise, and the initial values occurring in \eqref{eq:stEvolEqnIntro} according to 
\begin{equation*}
    \mF \ce mR(m,A)F,\quad \mG \ce mR(m,A)G,\quad \muo \ce mR(m,A)u_0
\end{equation*}
for some regularisation parameter $m \in \N$, where $R(m,A) \ce (m-A)^{-1}$ denotes the resolvent. By construction, $\mF$ maps to $D(A)$, $\mG$ maps to $\gamma(H,D(A))$, and $\muo \in L^p(\Omega;D(A))$, giving the desired additional regularity in structure with the more regular space being $D(A)$. Hence, this enables us to apply existing convergence rate results for the regularised discretisation given by
\begin{equation}
\label{eq:regDiscretisationIntro}
    \mU^j \ce R_k \mU^{j-1}+ k R_k \mF(\mU^{j-1}) + R_k \mG(\mU^{j-1})\Delta W_j,\quad \mU^0 \ce \muo,
\end{equation}
for $1 \le j \le N_k$. They approximate the mild solution $\mU$ of the regularised evolution equation
\begin{equation}
\label{eq:regStEvolEqnIntro}
    \mU = (A\mU + \mF(\mU))\,\rmd t + \mG(\mU)\,\rmd W_H(t),\quad\mU(0)=\muo \in X.
\end{equation}

Since the equations considered are in $2$-smooth Banach spaces, the results from \cite{KliobaVeraar23} have to be generalised beyond the Hilbert space setting. While most of the extension is straightforward, stability of the scheme can no longer be obtained by a dilation argument. Instead, the key ingredient of the proof is a novel maximal inequality for discrete convolutions based on a martingale argument. Lemma \ref{lem:PinelisBound} illustrates how martingale (difference) sequences are used in this argument, resulting in stability as stated in Proposition \ref{prop:stab} and, ultimately, in pathwise uniform convergence rates, see Theorem \ref{thm:convergenceRate}. It yields convergence of the pathwise uniform discretisation error \eqref{eq:uniformerrorIntro} of the regularised problem as $k \to 0$. Note that this convergence is not uniform in the regularisation parameter $m \in \N$.

The main task consists in proving convergence of the regularisation error both for the mild solutions to \eqref{eq:regStEvolEqnIntro} and for the discretisations \eqref{eq:regDiscretisationIntro} as $m \to \infty$ uniformly in the number of time steps $N_k$. For the continuous regularisation error, this is achieved by a combination of a maximal inequality for stochastic convolutions, continuity of paths of the nonlinearities evaluated at the mild solution, and a classical continuous Gronwall argument.

An analogous straightforward estimate of the discrete regularisation error fails. Instead, the maximal inequality for discrete convolutions already used in the stability proof proves helpful. In addition, a clever splitting of the error is required so that it can be rewritten in terms of the continuous regularisation error, the discretisation error of the regularised problem, as well as the full error, i.e., the discretisation error of the original problem \eqref{eq:stEvolEqnIntro}. The  regularisation parameter $m \in \N$ can then be fixed large enough such that the first error becomes small, and we already showed uniform convergence of the second as $k \to 0$. In the end, we thus derive an estimate for the full error in terms of itself, and we apply a standard discrete Gronwall argument, finally resulting in the desired convergence statement.

\subsection{Overview}

\begin{itemize}
\item Section \ref{sec:prelim} contains the preliminaries for the rest of the paper.
\item In Section \ref{sec:2smoothRates} we extend previous results on pathwise uniform convergence rates from Hilbert to $2$-smooth Banach spaces. In particular, we present a novel stability proof.
\item In Section \ref{sec:convergence} we state and prove the main result on pathwise uniform convergence of time discretisation schemes in the case of irregular Lipschitz nonlinearity and noise, which leads to Theorem \ref{thm:convergenceIntro}.
\item Section \ref{sec:example} illustrates the results for the nonlinear stochastic Schrödinger equation.
\end{itemize}

\subsubsection*{Acknowledgements}

The first author wishes to thank the DAAD for the financial support to visit TU Delft for one semester in 2022 and the colleagues in Delft for their hospitality.
The authors also thank Jan van Neerven, Christian Seifert for helpful discussion and comments. Furthermore, the authors would like to thank the referee for careful reading and pointing out the estimate \eqref{eq:easyEstPointwiseStrong}.

\section{Preliminaries}
\label{sec:prelim}

Throughout the paper, we consider the final time $T>0$ to be fixed and denote the Borel $\sigma$-algebra of a Banach space $X$ by $\calB(X)$. We use the notation $f(x) \lesssim g(x)$ to denote that there is a constant $C \ge 0$ such that for all $x$ in the respective set, $f(x) \le C g(x)$. Furthermore, we fix a probability space $(\Omega, \filtrF, \PP)$ and a filtration $(\filtrF_t)_{t \in [0,T]}$ on this probability space. Unless otherwise stated, all random variables and stochastic processes considered are defined on $(\Omega, \filtrF, \PP)$. The progressive $\sigma$-algebra on $(\Omega, \filtrF, \PP)$ is denoted by $\calP$.

\subsection{$2$-smooth Banach spaces}
\label{subsec:2smoothBanach}

In this paper, we will work in $2$-smooth Banach spaces, a generalisation of Hilbert spaces, which is characterised by a parallelogram inequality instead of a parallelogram identity as is the case for Hilbert spaces.

\begin{definition}
	Let $D \ge 1$. A {\em $(2,D)$-smooth Banach space} is a Banach space $X$ for which for all $x,y \in X$,
	\[ \|x+y\|^2+\|x-y\|^2 \le 2 \|x\|^2+2D^2 \|y\|^2.\]
	We call a Banach space {\em $2$-smooth} if it is $(2,D)$-smooth for some $D \ge 1$.
\end{definition}
In the realm of stochastic analysis, this class of spaces plays an important role. As a consequence of the parallelogram identity, it includes all Hilbert spaces with $D=1$. Furthermore, the spaces $L^p(\mu)$ are contained in this class for $2 \le p <\infty$ with $D=\sqrt{p-1}$ \cite[Proposition~2.1]{Pin}. The following simple fact will be used throughout the paper: If $X$ is $(2,D)$-smooth and $A$ is a closed linear operator, then $D(A)$ equipped with the graph norm $(\|x\|^2+\|Ax\|^2)^{1/2}$ is again $(2,D)$-smooth.

\subsection{$\gamma$-radonifying operators}
\label{subsec:radonifying}

To give sense to stochastic integrals in Banach spaces $X$ that are not Hilbert, the space of $\gamma$-radonifying operators $\gamma(H,X)$ is required, where $H$ denotes a Hilbert space. It is obtained as the closure of a subset of the space of $\gamma$-summing operators. In this subsection only, let $(\gamma_n)_{n \in \N}$ denote a Gaussian sequence, i.e., a sequence of standard Gaussian random variables.

\begin{definition}
    We call a linear operator $R\from H \to X$ {\em $\gamma$-summing} if
    \begin{equation}
    \label{eq:defgammaSumming}
        \|R\|_{\gamma^\infty(H,X)} \ce \sup \Big(\E\Big\|\sum_{n=1}^N \gamma_nRh_n\Big\|_X^2\Big)^{1/2} <\infty,
    \end{equation}
    with the supremum being taken over all finite orthonormal systems $\{h_1,\ldots,h_N\}$ in $H$ with $N \in \N$. The space of all operators for which \eqref{eq:defgammaSumming} holds is denoted by $\gamma^\infty(H,X)$. 
\end{definition}
The thus obtained $(\gamma^\infty(H,X),\|\cdot\|_{\gamma^\infty(H,X)})$ is a normed space, which is contained in the space of bounded linear operators $\calL(H,X)$. The inclusion follows immediately from considering orthonormal systems $\{h\}$ consisting of a single element. Furthermore, $\gamma^\infty(H,X)$ is a Banach space. 

The space of $\gamma$-radonifying operators from $H$ to $X$ is now obtained as the closure of finite rank operators in the space of $\gamma$-summing operators.

\begin{definition}
    Let $N \in \N$, $h_n \in H$, and $x_n \in X$ for $1 \le n \le N$ and define the operator $h_n \otimes x_n \in \calL(H,X)$ by $h \mapsto (h_n \otimes x_n)h \ce \langle h_n,h\rangle_H x_n$. Operators of the form $R=\sum_{n=1}^N h_n \otimes x_n$ are called {\em finite rank operators}, and the space of all such operators is denoted by $\mathrm{F}\mathrm{R}(H,X)$. We define the space $\gamma(H,X)$ of all {\em $\gamma$-radonifying operators} as the closure of $\mathrm{F}\mathrm{R}(H,X)$ in $\gamma^\infty(H,X)$. 
\end{definition}

Trivially, $\mathrm{F}\mathrm{R}(H,X) \seq \gamma^\infty(H,X)$. As a closed subspace of $\gamma^\infty(H,X)$, $\gamma(H,X)$ is a Banach space with the norm $\|\cdot\|_{\gHX} \ce \|\cdot\|_{\gamma^\infty(H,X)}$. For a finite rank operator $R=\sum_{n=1}^N h_n \otimes x_n \in \gHX$ with orthonormal $\{h_1,\ldots,h_N\} \seq H$, the norm \eqref{eq:defgammaSumming} simplifies to
\[ \|R\|_{\gHX}^2 = \E\bigg\|\sum_{n=1}^N \gamma_n x_n\Big\|^2.\]
In case $X$ is a Hilbert space, the norm of $R$ further simplifies to $\sum_{n=1}^N \|x_n\|^2$. Hence, by taking the completion, we see that $\gHX$ coincides with the space $\calL_2(H,X)$ of Hilbert-Schmidt operators for Hilbert spaces $X$ and $H$. An example for a $\gamma$-radonifying operator in a non-Hilbert space is given by the indefinite integration operator $I_T\from L^2(0,T) \to C[0,T]$ defined by $f \mapsto [t \mapsto \int_0^t f(s)\ds]$ for $t \in [0,T]$.

A property of $\gHX$ frequently used in the following is the (left) ideal property.

\begin{proposition}[\cite{AnalysisBanachSpacesII}, Theorem 9.1.10]
\label{prop:leftIdeal}
    Let $R \in \gamma^\infty(H,X)$. Let $\tilde{H}$ be another Hilbert space and $\tilde{X}$ another Banach space. Then for all $L_1 \in \calL(\tilde{H},H)$ and $L_2\in \calL(X, \tilde{X})$, we have $L_2RL_1 \in \gamma^\infty(\tilde{H},\tilde{X})$ and for all $1 \le p <\infty$,
    \[ \|L_2RL_1\|_{\gamma^\infty(\tilde{H},\tilde{X})} \le \|L_2\|_{\calL(X, \tilde{X})} \|R\|_{\gamma^\infty(H,X)} \|L_1\|_{\calL(\tilde{H},H)}.\]
    If, moreover, $R \in \gHX$, then $L_2RL_1 \in \gamma(\tilde{H},\tilde{X})$.
\end{proposition}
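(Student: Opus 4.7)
The norm bound is obtained in two steps, handling $L_2$ on the left and $L_1$ on the right separately, and the $\gamma$-radonifying statement then follows by a density argument. For the left factor, the argument is immediate: for any orthonormal system $\{h_1,\ldots,h_N\} \subseteq H$, pulling the bounded operator $L_2$ out of the norm yields
\[ \E\Big\|\sum_{n=1}^N \gamma_n L_2 R h_n\Big\|_{\tilde X}^2 \le \|L_2\|_{\calL(X,\tilde X)}^2\,\E\Big\|\sum_{n=1}^N \gamma_n R h_n\Big\|_{X}^2 \le \|L_2\|_{\calL(X,\tilde X)}^2\,\|R\|_{\gamma^\infty(H,X)}^2, \]
and taking the supremum over ONS gives $\|L_2 R\|_{\gamma^\infty(H,\tilde X)} \le \|L_2\|_{\calL(X,\tilde X)}\,\|R\|_{\gamma^\infty(H,X)}$.

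The substantive step is the right factor, since $L_1$ does not in general map orthonormal systems in $\tilde H$ to orthonormal systems in $H$. Fix an ONS $\{\tilde h_1,\ldots,\tilde h_N\} \subseteq \tilde H$ and consider the centred $H$-valued Gaussian sum $\xi \ce \sum_{n=1}^N \gamma_n L_1 \tilde h_n$. Its covariance operator is $Q = \sum_{n=1}^N (L_1 \tilde h_n) \otimes (L_1 \tilde h_n) = L_1 P L_1^*$, where $P$ is the orthogonal projection of $\tilde H$ onto $\mathrm{span}\{\tilde h_1,\ldots,\tilde h_N\}$, so $\|Q\|_{\calL(H)} \le \|L_1\|_{\calL(\tilde H,H)}^2$. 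A finite-dimensional spectral decomposition of $Q$ produces an orthonormal set $\{h_k'\}_{k=1}^{N'} \subseteq H$ and scalars $\sigma_k \in [0,\|L_1\|_{\calL(\tilde H,H)}]$ such that $\xi$ equals $\sum_{k=1}^{N'} \sigma_k \gamma_k' h_k'$ in distribution, where $(\gamma_k')$ is a fresh standard Gaussian sequence. Applying $R$ and invoking the Gaussian contraction principle (if $|a_k| \le 1$ then $\E\|\sum_k a_k \gamma_k' x_k\|^2 \le \E\|\sum_k \gamma_k' x_k\|^2$, proved by conditional Jensen after writing each $\gamma_k'$ as a convex combination of two independent standard Gaussians) yields
\[ \E\Big\|\sum_{n=1}^N \gamma_n R L_1 \tilde h_n\Big\|_X^2 = \E\|R\xi\|_X^2 \le \|L_1\|_{\calL(\tilde H,H)}^2\,\E\Big\|\sum_{k=1}^{N'} \gamma_k' R h_k'\Big\|_X^2 \le \|L_1\|_{\calL(\tilde H,H)}^2\,\|R\|_{\gamma^\infty(H,X)}^2. \]
Taking the supremum over ONS in $\tilde H$ and combining with the first bound yields the claimed inequality for $L_2 R L_1$.

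For the $\gamma$-radonifying statement, if $R \in \gHX$, then by definition there exist finite rank operators $R^{(n)} \in \mathrm{F}\mathrm{R}(H,X)$ with $R^{(n)} \to R$ in $\gamma^\infty$-norm. Each composition $L_2 R^{(n)} L_1$ is again of finite rank (its range is contained in $L_2(\im R^{(n)})$), and the norm bound applied to $R - R^{(n)}$ forces $L_2 R^{(n)} L_1 \to L_2 R L_1$ in $\gamma^\infty(\tilde H, \tilde X)$. Hence $L_2 R L_1$ lies in the closure of the finite-rank operators and so belongs to $\gamma(\tilde H, \tilde X)$. The main obstacle throughout the argument is the right factor: there is no direct substitution into the defining supremum for $\|R\|_{\gamma^\infty}$, and Gaussian rotational invariance (via the covariance decomposition of $\xi$) is what ultimately unlocks the required bound.
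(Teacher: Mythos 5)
The paper offers no proof of this proposition; it is imported verbatim from \cite{AnalysisBanachSpacesII}, Theorem 9.1.10. Your argument is correct and is essentially the standard proof of the ideal property: the left factor is immediate by pulling $L_2$ out of the norm, the right factor is handled by identifying the covariance operator $L_1PL_1^*$ of $\sum_n \gamma_n L_1\tilde h_n$, diagonalising it to replace $L_1\tilde h_n$ by an orthonormal system with coefficients $\sigma_k\le\|L_1\|$, and applying the Gaussian contraction principle, after which the radonifying case follows by the density argument you give. The only cosmetic nit is that $a\gamma'+\sqrt{1-a^2}\gamma''$ is not literally a convex combination of Gaussians, but the conditional-Jensen mechanism you invoke (namely $a\gamma'=\E[\gamma\mid\gamma']$) is exactly the right one.
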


For details on the aforementioned and further properties of $\gamma$-radoniyfing operators, we refer the reader to \cite[Section~9.1]{AnalysisBanachSpacesII}.

\subsection{Stochastic integration}

Let $H$ denote a separable Hilbert and $X$ a $2$-smooth Banach space. For a $\gamma$-radonifying operator $R\in \gHX$ and a sequence $\gamma = (\gamma_n)_{n\in \N}$ of centered i.i.d.\ normally distributed random variables, we write 
\begin{equation}\label{eq:convradonW}
R \gamma \ce \sum_{n\in\N} \gamma_n R h_n
\end{equation}
for an orthonormal basis $\{h_n\}_{n \in \N}$ of $H$. The convergence is in $L^p(\Omega;X)$ for $p<\infty$ and almost surely, independently of the choice of orthonormal basis (see \cite[Corollary 6.4.12]{AnalysisBanachSpacesII}). While $R\gamma$ depends on the choice of the orthonormal basis, its distribution does not.
The stochastic integrals appearing, for instance, in the mild solution formula \eqref{eq:stEvolEqnIntro} are stochastic integrals of operator-valued integrands. Hence, the integrator is an $H$-cylindrical Brownian motion rather than a (standard) Brownian motion. 

An \emph{$H$-cylindrical Brownian motion} is a mapping $W_H\from L^2(0,T;H) \to L^2(\Omega)$ such that
\begin{enumerate}[label=(\roman*)]
    \item $W_H h$ is Gaussian for all $h \in L^2(0,T;H)$,
    \item $\E(W_H h_1 \cdot W_H h_2) = \langle h_1,h_2 \rangle_{L^2(0,T;H)}$ for all $h_1, h_2 \in L^2(0,T;H)$,
\end{enumerate}
where we include a complex conjugate on $W_H h_2$ in case we want to use a complex $H$-cylindrical Brownian motion.
As a shorthand notation, we let $W_H(t)h \ce W_H(\1_{(0,t)}\otimes h)$ for $h \in H$ and $t \in [0,T]$. An $H$-cylindrical Brownian motion can be regarded as the infinite-dimensional analogue of a Brownian motion, in the sense that $(W_H(t)h)_{t \in [0,T]}$ is a (standard) Brownian motion for each fixed $h \in H$ (of norm $\|h\|_H=1$). Real-valued Brownian motions are recovered in the case $H=\R$.

A notion closely related to $H$-cylindrical Brownian motions are so-called $Q$-Wiener processes. An $H$-valued stochastic process $(W(t))_{t \geq 0}$ is called a \emph{$Q$-Wiener process} for an operator $Q \in \calL(H)$ if $W(0)=0$, $W$ has continuous trajectories and independent increments, and $W(t)-W(s)$ is normally distributed with parameters $0$ and $(t-s)Q$ for $t\geq s \geq 0$. One can show that $W$ is a $Q$-Wiener process if and only if there exists an $H$-cylindrical Brownian motion $W_H$ such that $Q^{1/2}W_H\ce\sum_{n\geq 1} Q^{1/2} h_n W_H(t) h_n = W(t)$ for an orthonormal basis $(h_n)_{n \ge 1}$ of $H$ (cf. \eqref{eq:convradonW} and note that the series is independent of the choice of $(h_n)_{n \in \N}$). 

Stochastic integrals with respect to $H$-cylindrical Brownian motions or $Q$-Wiener processes can be defined in the sense of Itô integrals. Further properties of $H$-cylindrical Brownian motions, $Q$-Wiener processes, and details on the Itô integral in Hilbert spaces can be found in \cite{DaPratoZabczyk14}. An overview of stochastic integration in Banach spaces is contained in \cite{vanNeervenVeraarWeisSurvey}.

To estimate Itô integrals w.r.t. such $H$-cylindrical Brownian motions, the Burkholder-Davis-Gundy inequalities are particularly helpful. In the case $X$ is a Hilbert space, they imply that
\begin{equation}\label{eq:BDG}
\bigg(\E \sup_{t \in [0,T]} \bigg\| \int_0^t g_s \dWHs \bigg\|_X^p\bigg)^{1/p} \leq B_p \|g\|_{L^p(\Omega;L^2(0,T;\gHX))}
\end{equation}
for some constant $B_p>0$ for all $g \in L^p(\Omega;L^2(0,T;\gHX))$. In general $(2,D)$-smooth Banach spaces, \eqref{eq:BDG} holds with $B_p=10D\sqrt{p}$. This follows as a special case of the following maximal inequality for stochastic convolutions from \cite{JanMarkMainPaper} based on earlier works on the contractive case in Hilbert spaces \cite{HausSei}. 
We recall that a $C_0$-semigroup $(S(t))_{t \ge 0}$ is said to be \textit{quasi-contractive} if for some constant $\lambda\ge 0$, $\|S(t)\| \le e^{\lambda t}$ for all $t \ge 0$. 

\begin{theorem}[\cite{JanMarkMainPaper}, Theorem 4.1]
\label{thm:maxIneqQuasiContractive}
	Let $(S(t))_{t \ge 0}$ be a quasi-contractive $C_0$-semigroup with constant $\lambda>0$ on a $(2,D)$-smooth Banach space $X$. 

 \noindent
    Then for every $g \in L_\calP^0(\Omega;L^2(0,T;\gHX))$ the process $(\int_0^t S(t-s)g(s)\dWHs)_{t \in [0,T]}$ has a continuous modification, which satisfies, for all $0<p<\infty$,
    \begin{equation*}
        \E \sup_{t \in [0,T]} \bigg\| \int_0^t S(t-s)g(s) \dWHs \bigg\|^p \le C_{p,D}^p \|g\|_{L^p(\Omega;L^2(0,T;\gHX))}^p,
    \end{equation*}
    with a constant $C_{p,D}$ depending only on $p$ and $D$. For $2 \le p <\infty$ the inequality holds with $C_{p,D}=10\ee^{\lambda T}D \sqrt{p}$.
\end{theorem}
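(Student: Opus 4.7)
First, I would reduce to the contractive case. Setting $\tilde S(t)\ce \ee^{-\lambda t}S(t)$, a contractive $C_0$-semigroup, and $\tilde g(s)\ce \ee^{-\lambda s}g(s)$, one checks that
\[
\int_0^t S(t-s)g(s)\dWHs = \ee^{\lambda t}\int_0^t \tilde S(t-s)\tilde g(s)\dWHs,
\]
so it suffices to establish the bound for a contractive semigroup, paying a multiplicative factor $\ee^{\lambda T}$ in the constant.

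Second, by density of adapted step processes in $L^p_\calP(\Om;L^2(0,T;\gHX))$, I would first prove the estimate for $g$ constant on each $[t_{i-1},t_i)$ of a uniform partition with step $k=T/N$. For such $g$ the stochastic convolution evaluated at $t_j$ coincides with the exponential-Euler sum
\[
U^j\ce \sum_{i=1}^{j} S(t_j-t_{i-1})g_{i-1}\Delta W_i \qquad (\Delta W_i=W_H(t_i)-W_H(t_{i-1})),
\]
which satisfies the recursion $U^j = S(k)U^{j-1}+g_{j-1}\Delta W_j$. The bridge from the continuous supremum $\sup_{t\in[0,T]}\|\int_0^t S(t-s)g(s)\dWHs\|$ to the discrete maximum $\max_j \|U^j\|$, and the final passage back to general integrands, would be handled via strong continuity of $S$, a path-continuity argument, and standard density arguments, which simultaneously yield the continuous modification.

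The main obstacle is the discrete maximal inequality
\[
\E\max_{j\leq N}\|U^j\|^p \leq (10D\sqrt p)^p\,\E\Big(\sum_{i=1}^N\|g_{i-1}\|_{\gHX}^2\, k\Big)^{p/2}.
\]
In Hilbert spaces, the Sz.-Nagy--Foias unitary dilation reduces the convolution to a genuine martingale, after which Doob combined with BDG closes the argument; such a dilation is unavailable in $2$-smooth Banach spaces. Instead, I would exploit that for each fixed terminal index $n\leq N$ the shifted process
\[
M_j^{(n)}\ce \sum_{i=1}^{j} S(t_n-t_{i-1})g_{i-1}\Delta W_i, \qquad j=0,\ldots,n,
\]
is a genuine $(\filtrF_{t_j})_j$-martingale, because the coefficients $S(t_n-t_{i-1})g_{i-1}$ are $\filtrF_{t_{i-1}}$-measurable and independent of $j$. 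Pinelis' Doob-type maximal inequality for martingales in $(2,D)$-smooth Banach spaces (the key available tool replacing the Hilbertian dilation) yields
\[
\E\max_{j\leq n}\|M_j^{(n)}\|^p \leq (10D\sqrt p)^p\, \E\Big(\sum_{i=1}^n\|S(t_n-t_{i-1})g_{i-1}\|_{\gHX}^2\, k\Big)^{p/2},
\]
and by the left-ideal property of $\gHX$ (Proposition \ref{prop:leftIdeal}) together with contractivity of $S$ the right-hand side is bounded by $(10D\sqrt p)^p\,\E(\sum_i \|g_{i-1}\|_\gHX^2 k)^{p/2}$. Since $M_n^{(n)}=U^n$, this immediately controls each single $\E\|U^n\|^p$. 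The delicate point is to upgrade this pointwise-in-$n$ estimate into a maximal-in-$n$ estimate without losing the factor $N^{1/p}$ that a naive union bound would cost. I would attempt this by splitting $U^j = \sum_i g_{i-1}\Delta W_i + \sum_i (S(t_j-t_{i-1})-I)g_{i-1}\Delta W_i$, treating the first summand by a straight martingale maximal inequality in $(2,D)$-smooth spaces, and absorbing the second by a telescoping Abel-type rearrangement that iteratively invokes the $M^{(n)}$ bounds across grid points; this is precisely the martingale-based discrete maximal inequality foreshadowed in Lemma \ref{lem:PinelisBound}.

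Finally, letting $k\to 0$ and using the approximation step recovers the continuous maximal inequality with the explicit constant $10\ee^{\lambda T}D\sqrt p$ for $p\in[2,\infty)$, while the case $0<p<2$ follows from the $p=2$ bound through a standard Lenglart-type domination argument.
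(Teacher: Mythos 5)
This theorem is imported verbatim from \cite{JanMarkMainPaper} (their Theorem 4.1); the paper gives no proof of its own, so your attempt can only be measured against the strategy of that reference, which the present paper replays in discrete form in Lemma \ref{lem:PinelisBound}. Your skeleton is the right one: the rescaling $\tilde S(t)=\ee^{-\lambda t}S(t)$, $\tilde g(s)=\ee^{-\lambda s}g(s)$ correctly reduces to the contractive case at the cost of $\ee^{\lambda T}$, the discretisation by adapted step processes is the standard route, and you correctly identify that the Sz.-Nagy dilation must be replaced by a martingale-type maximal inequality in $(2,D)$-smooth spaces.

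The genuine gap is at the crucial step. Freezing the terminal index and applying Doob--Pinelis to each $M^{(n)}_j=\sum_{i\le j}S(t_n-t_{i-1})g_{i-1}\Delta W_i$ only controls $\E\|U^n\|^p$ for each single $n$, and your proposed repair --- a ``telescoping Abel-type rearrangement'' of the splitting $U^j=\sum_i g_{i-1}\Delta W_i+\sum_i(S(t_j-t_{i-1})-I)g_{i-1}\Delta W_i$ --- is not a proof: the second summand is itself a non-martingale convolution of exactly the same type, so the argument is circular, and it is not clear it can be closed without losing a factor $N^{1/p}$ or inflating the constant. The tool that resolves this is precisely the \emph{extended} Rosenthal--Burkholder inequality of Theorem \ref{thm:Pinelis} (Theorem 3.1 of \cite{JanMarkMainPaper}): the sequence $\widetilde M_j\ce U^j$ satisfies $\widetilde M_j=V_j\widetilde M_{j-1}+d_j$ with the deterministic contractions $V_j=S(k)$ and the martingale differences $d_j=S(k)g_{j-1}\Delta W_j$ (note your recursion should read $U^j=S(k)U^{j-1}+S(k)g_{j-1}\Delta W_j$ with your definition of $U^j$), and that theorem delivers the maximum over $j$ in one stroke, exactly as in the proof of Lemma \ref{lem:PinelisBound}. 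Two further points you would still need: the conditionally symmetric case of Theorem \ref{thm:Pinelis} gives $5p\|d^\ast\|_p+10D\sqrt p\,\|s(M)\|_p$, so to arrive at the clean constant $10D\sqrt p$ you must argue that $\|d^\ast\|_p\to 0$ as $k\to 0$ while $\|s(M)\|_p\to\|g\|_{L^p(\Omega;L^2(0,T;\gHX))}$ (this uses the left ideal property of Proposition \ref{prop:leftIdeal} and contractivity of $S(k)$); and the extrapolation to $0<p<2$ by Lenglart domination is fine but should be stated as applied to the continuous modification obtained in the limit.
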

Recall that for $p<1$, the expression on the right is only a seminorm of $g$.
Considering $S$ as the trivial semigroup, we recover continuity of Itô's isomorphism. In the case $p=2$, $H=\R$, and $X$ Hilbert, it is even an isometry known as \emph{Itô's isometry}.

\subsection{A version of the Rosenthal–Burkholder inequality}

On the fixed probability space $(\Om,\F,\P)$, we consider a finite filtration $(\F_j)_{j= 0}^\ell$, $\ell\in \N$, and denote by $\E_{\F_j}\ce\E(\cdot\mid\F_j)$ the
conditional expectation with respect to $\F_{j}$.
For an $X$-valued martingale $ (M_j)_{j= 0}^\ell$ with respect to $(\F_j)_{j= 0}^\ell$, we denote by
$(d_j)_{j=1}^\ell$ its difference sequence defined by
$d_j \ce M_j - M_{j-1}$.
Furthermore, let the non-negative random variables $M_j\ss$ (for $0\le j\le \ell$) and $d_j\ss$ and $s_j(M)$ (for $1\le j\le \ell)$ be given by
\begin{align*}
 M_j\ss  \ce\max_{0\le i\le j} \n M_i\n , \quad d_j\ss \ce\max_{1\le i\le j} \n d_i\n,  \quad  s_j(M)\ce\Bigl(\sum_{i=1}^j \E_{\F_{i-1}} \n d_{i}\n^2\Bigr)^{1/2},
\end{align*}
and set $M\ss \ce M\ss_\ell$, $d\ss \ce d\ss_\ell$, and $s(M) \ce s_\ell(M)$.

We call a mapping $V\from \Om\to \calL(X)$ such that $\om\mapsto V(\om)x$ is strongly measurable for all $x\in X$ a {\em random operator} on $X$ and a {\em random contraction} on $X$ if, additionally, its range consists of contractions. A sequence of random operators $(V_j)_{j \in \N}$ on $X$ is said to be {\em strongly predictable} in case each $V_jx$ is strongly $\filtrF_{j-1}$-measurable for all $x \in X$.

An adapted $X$-valued sequence $(\xi_j)_{j=1}^\ell$ is called {\em conditionally symmetric given $(\F_j)_{j=0}^\ell$}
if for all $1\le j\le \ell$ the random variables
 $\xi_j$ and $-\xi_j$ are conditionally equi-distributed given $\F_{j-1}$, i.e., for all Borel sets $B\in \calB(X)$ it holds that 
 $$ \E_{\F_{j-1}} \one_{\{\xi_j \in B\}} = \E_{\F_{j-1}}\one_{\{-\xi_j \in B\}}.$$

Recently, in \cite[Theorem 3.1]{JanMarkMainPaper} an extended version of Pinelis's version of the Rosenthal--Burkholder inequality (see \cite{Pin}) was proven. An alternative approach based on Bellman function techniques was found in \cite{Zo-Kr}. 

\begin{theorem}[\cite{JanMarkMainPaper}, Theorem 3.1]
\label{thm:Pinelis}
    Let $X$ be a $(2,D)$-smooth Banach space. Suppose that $(\widetilde{M}_j)_{j=0}^\ell$ is an adapted sequence of $X$-valued random variables, $(M_j)_{j=0}^\ell$ is an $X$-valued martingale with difference sequence $(d_j)_{j=1}^\ell$, $(V_j)_{j=1}^\ell$ is a sequence of random contractions on $X$ that is strongly predictable, and assume that we have $\widetilde{M}_0=M_0=0$ and
    \begin{equation*}
        \widetilde{M}_j=V_j\widetilde{M}_{j-1}+d_j,\quad j=1,\ldots,l.
    \end{equation*}
    Then for all $2 \le p <\infty$ we have
    \begin{equation*}
        \|(\widetilde{M})^\ast\|_p \le 30p\|d^\ast\|_p + 40D\sqrt{p} \|s(M)\|_p.
    \end{equation*}
    If, moreover, $(M_j)_{j=0}^\ell$ has conditionally symmetric increments, then
    \begin{equation*}
        \|(\widetilde{M})^\ast\|_p \le 5p\|d^\ast\|_p + 10D\sqrt{p} \|s(M)\|_p.
    \end{equation*}
\end{theorem}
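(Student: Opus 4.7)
The plan is to reduce to the martingale Pinelis--Rosenthal--Burkholder inequality in $(2,D)$-smooth Banach spaces applied to $(M_j)$, while handling the non-martingale factor introduced by the predictable random contractions $V_j$. Unwinding the recursion yields $\widetilde M_j = \sum_{i=1}^j T_{j,i}\,d_i$ with $T_{j,i}:=V_j V_{j-1}\cdots V_{i+1}$ and $T_{j,j}:=I$; these operator products are contractions but only $\F_{j-1}$-measurable (not $\F_{i-1}$-measurable), so $(\widetilde M_j)$ fails to be a martingale. The goal is to show that $\|(\widetilde M)^\ast\|_p$ is nevertheless controlled by the same quantities $\|d^\ast\|_p$ and $\|s(M)\|_p$ as in the pure martingale setting.

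For the $\|s(M)\|_p$ contribution I would use the stepwise 2-smoothness inequality for martingale differences. Viewing $V_j\widetilde M_{j-1}$ as an $\F_{j-1}$-measurable ``artificial centre'' and noting that $d_j = \widetilde M_j - V_j\widetilde M_{j-1}$ has vanishing conditional mean, Pisier's inequality gives
\[
\E_{\F_{j-1}}\|\widetilde M_j\|^2 \le \|V_j\widetilde M_{j-1}\|^2 + D^2\,\E_{\F_{j-1}}\|d_j\|^2 \le \|\widetilde M_{j-1}\|^2 + D^2\,\E_{\F_{j-1}}\|d_j\|^2,
\]
where contractivity of $V_j$ is used in the second step. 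Hence $\|\widetilde M_j\|^2 - D^2 s_j(M)^2$ is a supermartingale, and Doob's maximal inequality combined with a Lenglart-type interpolation produces the $40 D\sqrt p\,\|s(M)\|_p$ term. For the $\|d^\ast\|_p$ contribution I would perform a Davis decomposition $d_j = d_j' + d_j''$ separating ``small'' from ``large'' differences at the predictable threshold $d_{j-1}^\ast$, and feed each piece through the same recursion with the same $V_j$ to obtain $\widetilde M = \widetilde M' + \widetilde M''$. Since $\|T_{j,i}\| \le 1$ and at most finitely many indices contribute new maxima to the large-jump part $\widetilde M''$, the Gundy-type counting argument from the martingale case carries over and yields the $30 p\,\|d^\ast\|_p$ term.

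I expect the main obstacle to be this large-jump estimate: Pinelis's proof in the pure martingale setting relies on Rademacher randomisation of the martingale transform $\sum_i \eps_i d_i''$, and one must verify that interposing the contractions $T_{j,i}$ does not spoil the randomisation --- the delicate point being that $T_{j,i}$ is $\F_{j-1}$-measurable rather than $\F_{i-1}$-measurable, so some care is required when conditioning to preserve the adapted structure, with contractivity giving only pointwise domination. For the improved constants $(5p, 10D\sqrt p)$ under conditional symmetry, the Rademacher randomisation becomes redundant, since the conditional $\pm 1$-invariance of $d_j$ can be applied in each step directly, yielding the tighter numerical factors without the lossy randomisation constant.
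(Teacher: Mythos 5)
First, note that the paper does not prove this statement at all: it is imported verbatim as Theorem 3.1 of \cite{JanMarkMainPaper}, so there is no internal proof to compare against; your attempt has to be measured against the argument in that reference, which adapts Pinelis's proof of the Rosenthal--Burkholder inequality in $(2,D)$-smooth spaces. You do correctly isolate the one genuinely new ingredient needed for the perturbed recursion: since $V_j$ is a predictable random contraction, $V_j\widetilde{M}_{j-1}$ is $\mathcal{F}_{j-1}$-measurable and $\|V_j\widetilde{M}_{j-1}\|\le\|\widetilde{M}_{j-1}\|$, so the one-step $2$-smoothness inequality $\mathbb{E}_{\mathcal{F}_{j-1}}\|\widetilde{M}_j\|^2\le\|\widetilde{M}_{j-1}\|^2+D^2\,\mathbb{E}_{\mathcal{F}_{j-1}}\|d_j\|^2$ survives intact. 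This is exactly the observation that lets Pinelis's argument be run on $\widetilde{M}$ instead of $M$, and it also shows why your worry about $T_{j,i}=V_j\cdots V_{i+1}$ being only $\mathcal{F}_{j-1}$-measurable is a red herring: the proof never uses the unwound sum $\sum_{i}T_{j,i}d_i$, only the recursion one step at a time.

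However, the quantitative core of the theorem is not actually established by what you write, and the route you sketch would not deliver the stated constants. The whole content of the bound is the precise growth $c_1p\|d^*\|_p+c_2D\sqrt{p}\|s(M)\|_p$; obtaining the Gaussian-type factor $\sqrt{p}$ on the $s(M)$ term requires Pinelis's exponential supermartingale (a Bernstein-type tail bound for $\widetilde{M}^*$ in terms of $\|d^*\|_\infty$ and $\|s(M)\|_\infty$, obtained by feeding the one-step inequality into $\cosh$/$\exp$ of the norm and optimising the exponential parameter), not the quadratic supermartingale $\|\widetilde{M}_j\|^2-D^2s_j(M)^2$ followed by Doob and a ``Lenglart-type interpolation'': Doob applied to the quadratic supermartingale only gives the $p=2$ case, and Lenglart domination produces weak-type or low-moment estimates with the wrong constant structure. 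Likewise, the $d^*$ term and the improvement from $(30p,40D\sqrt{p})$ to $(5p,10D\sqrt{p})$ do not arise from a Davis decomposition with Rademacher randomisation of $\sum_i\varepsilon_i d_i''$; in the reference the conditionally symmetric case is proved first (symmetry improves the one-step inequality directly) and the general case is then deduced by a symmetrisation argument, which is precisely where the factors between the two sets of constants come from --- your proposal runs this reduction in the opposite direction and gives no mechanism producing the stated numerical factors. As it stands, you have the correct structural reduction but the actual inequality, with its specific $p$-dependence, remains unproved.
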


\subsection{Approximation of semigroups}

A fundamental part of the approximation of solutions to a stochastic evolution equation entails the temporal approximation of a semigroup by a scheme. The approximation behaviour is quantified as follows.

\begin{definition}
\label{def:orderScheme}
Let $X$ be a Banach space. An \emph{$\calL(X)$-valued scheme} is a function $R\from  [0,\infty) \to \calL(X)$. We denote $R_k \ce R(k)$ for $k \ge 0$. Let $Y$ be a Banach space that is continuously and densely embedded into $X$. If $A$ generates a $C_0$-semigroup $(S(t))_{t \ge 0}$ on $X$, an $\calL(X)$-valued scheme $R$ is said to \emph{approximate $S$ to order $\alpha>0$ on $Y$} or, equivalently, \emph{$R$ converges of order $\alpha$ on $Y$} if for all $T>0$ there is a constant $C_\alpha \ge 0$ such that
\begin{equation*}
    \|(S(jk)-R_k^j)u\|_X \le C_\alpha k^\alpha\|u\|_Y
\end{equation*}
for all $u \in Y$, $k>0$, and $j \in \N$ such that $jk \in [0,T]$.
An $\calL(X)$-valued scheme $R$ is said to be \emph{contractive} if $\|R_k\|_{\calL(X)} \le 1$ for all $k \ge 0$.
\end{definition}
Henceforth, the index for norms in the space $X$ will be omitted. In the linear deterministic case, the following schemes approximate $S$ to different orders:

\begin{itemize}
\item splitting scheme (S): $R_k = S(k)$, any order $\alpha >0$ on $X$;
\item implicit Euler (IE): $R_k = (1-kA)^{-1}$, order $\alpha \in (0,1]$ on $D((-A)^{2\alpha})$;
\item Crank-Nicolson (CN): $R_k = (2+kA)(2-kA)^{-1}$, order $\alpha \in (0,2]$ on $D((-A)^{3\alpha/2})$ provided that $R$ is contractive.
\end{itemize}

Note that contractivity of $(S(t))_{t\ge 0}$ implies sectoriality of $-A$ and thus the fractional powers $(-A)^\beta$ exist for $\beta>0$. As many commonly used schemes, (IE) and (CN) can be written as $R_k=r(-kA)$ for some function $r\from  \C_+ \to \C$, where $r(-kA)$ is defined via the $H^\infty$-calculus of $-A$. 

Common choices for the space $Y$ in Definition \ref{def:orderScheme} are domains of fractional powers of the negative of the generator $A$ of the semigroup. An important property of these spaces is their embedding into the real interpolation spaces with parameter $\infty$. That is, for $\alpha>0$
\begin{equation}
\label{eq:DAalphaembeds}
    D(A^{\alpha}) \hookrightarrow D_A(\alpha, \infty),
\end{equation}
where $D_A(\alpha,\infty)$ denotes the real interpolation space $(X,D(A))_{\alpha,\infty}$. See \cite{Lun,Tr1} for details on real interpolation spaces.

\subsection{Gronwall type lemmas}

We need the following variants of the classical Gronwall inequality from \cite[Lemmas 2.5 and 2.6]{KliobaVeraar23} in the continuous and the discrete version based on \cite[Proposition 5]{holteGronwall}.
\begin{lemma}\label{lem:gronwallvar}
Let $\phi\from [0,T]\to [0,\infty)$ be a continuous function and let $\alpha,\beta\ge 0$ be constants. If
\[\phi(t) \leq \alpha+\beta\Big(\int_0^t \phi(s)^2 ds\Big)^{1/2}, ~\text{ for } t\in [0,T],\]
then
\[\phi(t) \leq \alpha (1+\beta^2 t)^{1/2}\exp\Big(\frac12+\frac12 \beta^2 t\Big) , ~\text{ for } t\in [0,T].\]
\end{lemma}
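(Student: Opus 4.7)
The plan is to square the hypothesis so that it becomes a classical linear Gronwall inequality for the function $s\mapsto \phi(s)^2$, and then to choose the Young parameter used in the squaring as a function of the endpoint $t$ to optimise the resulting bound. Concretely, for fixed $t\in (0,T]$ with $\beta>0$, I introduce a parameter $\eps>0$ and apply Young's inequality $(a+b)^2\le (1+\eps)a^2+(1+\eps^{-1})b^2$ to the assumed bound on $\phi(s)$, obtaining for every $s\in [0,t]$ the linear integral inequality
\[
\phi(s)^2 \le (1+\eps)\alpha^2 + (1+\eps^{-1})\beta^2\int_0^s \phi(u)^2\, du .
\]

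Since $\phi^2$ is continuous and the coefficients on the right-hand side are constants, the classical (linear) Gronwall lemma gives $\phi(s)^2\le (1+\eps)\alpha^2\exp((1+\eps^{-1})\beta^2 s)$ for all $s\in[0,t]$. I then evaluate at $s=t$ and make the $t$-dependent choice $\eps=\beta^2 t$, which turns the exponent into $(1+\eps^{-1})\beta^2 t = \beta^2 t+1$ and the prefactor into $(1+\beta^2 t)\alpha^2$. Taking square roots yields exactly the desired estimate $\phi(t)\le \alpha(1+\beta^2 t)^{1/2}\exp(\tfrac12+\tfrac12\beta^2 t)$. The degenerate cases $\beta=0$ or $t=0$ are trivial, because the hypothesis then forces $\phi(t)\le\alpha$, which is already dominated by the right-hand side of the claim.

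The main obstacle is locating the correct choice of $\eps$. The naive choice $\eps=1$ (corresponding to $(a+b)^2\le 2a^2+2b^2$) combined with Gronwall produces only the much weaker estimate $\phi(t)\le \sqrt 2\,\alpha\,\ee^{\beta^2 t}$, with exponential rate $\beta^2 t$ rather than $\tfrac12\beta^2 t$. Recognising that the Young parameter may depend on the final time $t$, and thereby trading a polynomial factor $(1+\beta^2 t)^{1/2}$ for a halving of the rate in the exponential, is the one nonobvious step; all remaining ingredients (Young's inequality, the standard Gronwall lemma, continuity of $\phi$) are entirely routine.
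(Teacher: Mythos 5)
Your proof is correct. The paper does not prove this lemma itself but imports it from \cite[Lemma 2.5]{KliobaVeraar23} (in turn based on \cite[Proposition 5]{holteGronwall}), and your argument --- squaring the hypothesis via the parametrized Young inequality $(a+b)^2\le(1+\varepsilon)a^2+(1+\varepsilon^{-1})b^2$, applying the classical linear Gronwall lemma to $\phi^2$ on $[0,t]$, and then choosing $\varepsilon=\beta^2 t$ --- is precisely the computation that produces the stated constants $(1+\beta^2t)^{1/2}$ and $\exp(\tfrac12+\tfrac12\beta^2 t)$, so it matches the intended proof. The degenerate cases $\beta=0$ and $t=0$ are handled correctly as well.
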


\begin{lemma}
\label{lem:KruseGronwall}
    Let $(\varphi_j)_{j \ge 0}$ be a non-negative sequence and $\alpha,\beta \ge 0$. If
    \begin{equation*}
        \varphi_j \le \alpha + \beta\Big(\sum_{i=0}^{j-1} \varphi_i^2\Big)^{1/2}~\text{ for } j \ge 0,
    \end{equation*}
    then
    \begin{equation*}
        \varphi_j \le \alpha (1+\beta^2 j)^{1/2} \exp\Big(\frac12+\frac12\beta^2 j\Big)~\text{ for }j \ge 0.
    \end{equation*}
\end{lemma}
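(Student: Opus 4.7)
The plan is to linearise the square-root nonlinearity on the right-hand side via Young's inequality and then invoke the classical linear discrete Gronwall recursion ``$a_l \le c + d\sum_{i<l} a_i$ implies $a_l \le c(1+d)^l$''. Squaring the hypothesis yields
$$\varphi_l^2 \le \alpha^2 + 2\alpha\beta\Bigl(\sum_{i=0}^{l-1}\varphi_i^2\Bigr)^{1/2} + \beta^2 \sum_{i=0}^{l-1}\varphi_i^2,$$
so the only term that is not linear in $\sum \varphi_i^2$ is the middle cross term, which one must absorb using the inequality $2ab \le a^2/\delta + \delta b^2$.

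Concretely, I would fix a target index $j \ge 1$ (the cases $j=0$ and $\beta = 0$ being immediate), apply Young's inequality to the cross term with parameter $\delta = 1/(\beta^2 j)$, and derive the uniform linear estimate
$$\varphi_l^2 \le \alpha^2(1+\beta^2 j) + \Bigl(\beta^2 + \tfrac{1}{j}\Bigr)\sum_{i=0}^{l-1}\varphi_i^2, \qquad 0 \le l \le j.$$
The linear discrete Gronwall recursion then gives $\varphi_l^2 \le \alpha^2(1+\beta^2 j)(1+\beta^2+1/j)^l$; specialising to $l = j$, using $1+x \le e^x$, and taking square roots produces exactly $\varphi_j \le \alpha(1+\beta^2 j)^{1/2} e^{(1+\beta^2 j)/2}$.

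The main subtlety is that the Young-parameter $\delta$ must be chosen as a function of the \emph{target} index $j$ rather than uniformly; the argument is therefore rerun afresh for each $j$ rather than producing a single recursion valid for all indices at once. This $j$-dependent scaling is precisely what produces both the polynomial prefactor $(1+\beta^2 j)^{1/2}$ and the extra $e^{1/2}$ in the exponential, exactly mirroring the constants in the continuous analogue, Lemma~\ref{lem:gronwallvar}. Should this tuning feel ad hoc, a conceptually cleaner alternative is to encode the sequence as the step function $\tilde\phi(t) \ce \varphi_{\lfloor t \rfloor}$, observe that $\int_0^j \tilde\phi(s)^2 \ud s = \sum_{i=0}^{j-1}\varphi_i^2$, and appeal to a step-function extension of Lemma~\ref{lem:gronwallvar}; this reduces the discrete claim to its continuous counterpart at the cost of verifying that continuity is not essential in that lemma's proof.
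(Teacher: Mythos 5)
Your proof is correct: the $j$-dependent Young splitting $2\alpha\beta S_l^{1/2} \le \alpha^2\beta^2 j + S_l/j$ (where $S_l = \sum_{i=0}^{l-1}\varphi_i^2$), the linear recursion $a_l \le c + d\sum_{i<l} a_i \Rightarrow a_l \le c(1+d)^l$, and the final bound $(1+\beta^2+1/j)^j \le \ee^{1+\beta^2 j}$ all check out, as do the degenerate cases $j=0$ and $\beta=0$. The paper states this lemma without proof, citing \cite{KliobaVeraar23} and \cite{holteGronwall}, and your argument is precisely the standard one underlying those references, so there is nothing substantive to compare; your closing caveat that the step-function reduction to Lemma \ref{lem:gronwallvar} would require relaxing the continuity hypothesis there is also accurate, but the primary argument stands on its own.
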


\section{Convergence rates on $2$-smooth Banach spaces}
\label{sec:2smoothRates}

We consider the stochastic evolution equation with multiplicative noise
\begin{align}
\label{eq:StEvolEqnFG_WP}
    \Bigg\{\begin{split} \rmd U &=(A U + F(t,U))\,\rmd t  + G(t,U) \,\rmd W_H~~~\text{ on } [0,T],\\ U(0) &= u_0 \in L_{\filtrF_0}^p(\Omega;X)
    \end{split}
\end{align}
for $2 \le p <\infty$, $T>0$, and $A$ generating a $C_0$-semigroup $(S(t))_{t \ge 0}$ of contractions on a $2$-smooth Banach space $X$. In this section, we extend the pathwise uniform convergence rates obtained in \cite{KliobaVeraar23} for contractive time discretisation schemes on Hilbert spaces to $2$-smooth Banach spaces. In particular, this includes a generalisation of the well-posedness and stability results used for the main error estimate in Theorem \ref{thm:convergenceRate}.

The following assumption ensures well-posedness of the stochastic evolution equation \eqref{eq:StEvolEqnFG_WP}.
\begin{assumption} 
\label{ass:WP}
    Let $X$ be a $(2,D)$-smooth Banach space for some $D \ge 1$ and let $p \in [2,\infty)$.
    Let $F\from \Omega \times [0,T] \times X \to X$, $F(\omega, t,x) = \tilde{F}(\omega,t,x) + f(\omega,t)$ and $G\from \Omega \times [0,T] \times X \to \gHX$, $G(\omega, t,x) = \tilde{G}(\omega,t,x) + g(\omega,t)$ be strongly $\calP\otimes \calB(X)$-measurable and such that $\tilde{F}(\cdot,\cdot,0) = 0$ as well as $\tilde{G}(\cdot,\cdot,0) = 0$. Suppose that
    \begin{enumerate}[label=(\alph*)]
        \item\label{assItem:WP_Lipschitz} \emph{(global Lipschitz continuity on $X$)} there exist constants $\CF, \CG \ge 0$ such that for all $\omega \in \Omega, t \in [0,T]$ and $x,y\in X$, it holds that
        \begin{align*}
            \|\tilde{F}(\omega,t,x)-\tilde{F}(\omega,t,y)\| &\le \CF\|x-y\|,\\ \|\tilde{G}(\omega,t,x)-\tilde{G}(\omega,t,y)\|_\gHX &\le \CG\|x-y\|,
        \end{align*}
        \item\label{item:FG_WP_integrability} \emph{(integrability)} $f \in L^p_\calP(\Omega; L^1(0,T;X))$ and $g \in L^p_\calP(\Omega; L^2(0,T;\gHX))$.
    \end{enumerate}
\end{assumption}
Note that Assumption \ref{ass:WP} implies linear growth of $F$ and $G$, i.e., for all $\omega \in \Omega$, $t \in [0,T]$, and $x \in X$,
\begin{equation*}
    \|\tilde{F}(\omega,t,x)\| \le \CF \|x\|\quad\text{ and }\quad \|\tilde{G}(\omega,t,x)\|_\gHX \le \CG \|x\|.
\end{equation*}

Well-posedness shall be understood in the sense of existence and uniqueness of mild solutions to \eqref{eq:StEvolEqnFG_WP}.
\begin{definition}
    A $U\in L^0_{\calP}(\Omega;C([0,T];X))$ is called a \emph{mild solution} to \eqref{eq:StEvolEqnFG_WP} if a.s.\ for all $t \in [0,T]$
    \begin{equation*}
        U(t) = S(t)u_0 + \int_0^t S(t-s) F(s,U(s)) \,\rmd s + \int_0^t S(t-s) G(s,U(s)) \,\rmd W_H(s).
    \end{equation*}
\end{definition}

As a shorthand notation, we write 
\begin{equation*}
    \|f\|_{p,q,Z} \ce \|f\|_{L^p(\Omega;L^q(0,T;Z))},\quad \nn g \nn_{p,q,Z} \ce \|g\|_{L^p(\Omega;L^q(0,T;\gamma(H,Z)))}
\end{equation*}
for $p \in [2,\infty)$, $q \in [1,\infty]$, and $Z \seq X$.
The following well-posedness result is an extension of \cite[Thm.~4.3]{KliobaVeraar23} to $2$-smooth Banach spaces. 
\begin{theorem}
\label{thm:wellposed}
    Suppose that Assumption \ref{ass:WP} holds for some $p\in [2,\infty)$. Let $A$ be the generator of a $C_0$-contraction semigroup $(S(t))_{t \ge 0}$ on $X$ and let $u_0 \in L_{\calF_0}^p(\Omega;X)$.
   Then \eqref{eq:StEvolEqnFG_WP} has a unique mild solution $U \in L^p(\Omega;C([0,T];X))$. Moreover, there is a constant $C\ge 0$ depending only on $p$, $D$, $T$, $\CF$ and $\CG$ such that
    \begin{align*}
        \|U\|_{L^p(\Omega;C([0,T];X))} \le C\big(1+\|u_0\|_{L^p(\Omega;X)} + \|f\|_{p,1,X}+ \nn g\nn_{p,2,X} \big).
    \end{align*}
\end{theorem}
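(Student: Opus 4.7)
The plan is to prove existence and uniqueness by a Banach fixed point argument and to derive the a priori estimate in parallel from the same ingredients. Define the Picard map $\Phi$ on $L^p_{\calP}(\Omega;C([0,T];X))$ by
\[
\Phi(U)(t) \ce S(t)u_0 + \int_0^t S(t-s)F(s,U(s))\,\rmd s + \int_0^t S(t-s)G(s,U(s))\,\rmd W_H(s),
\]
which is well-defined thanks to the strong measurability and integrability in Assumption \ref{ass:WP}, together with the ideal property Proposition \ref{prop:leftIdeal} and Theorem \ref{thm:maxIneqQuasiContractive} applied to the contraction semigroup $(S(t))_{t\ge 0}$. I would first check $\Phi\from L^p(\Omega;C([0,T];X))\to L^p(\Omega;C([0,T];X))$ using contractivity of $S$, Minkowski's integral inequality, and the linear growth inherited from the Lipschitz assumption.

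For the a priori bound, set $\phi(t)\ce\|U\|_{L^p(\Omega;C([0,t];X))}$ for the candidate mild solution. Splitting $F=\tilde F+f$, $G=\tilde G+g$, contractivity of $S$ yields
\[
\phi(t)\le \|u_0\|_{L^p(\Omega;X)}+\|f\|_{p,1,X}+\CF\int_0^t\phi(s)\,\rmd s + C_{p,D}\Bigl(\nn g\nn_{p,2,X}+\CG\bigl(\textstyle\int_0^t \phi(s)^2\,\rmd s\bigr)^{1/2}\Bigr),
\]
where the stochastic convolution is controlled by Theorem \ref{thm:maxIneqQuasiContractive} with constant $C_{p,D}=10D\sqrt p$ (this is the precise point where $2$-smoothness is used). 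Bounding $\int_0^t\phi\le \sqrt{T}(\int_0^t\phi^2)^{1/2}$ puts the estimate into the form of Lemma \ref{lem:gronwallvar}, which closes the argument and yields the claimed bound with a constant depending only on $p,D,T,\CF,\CG$.

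For existence and uniqueness, I would repeat the same estimate applied to $U-V$, where $U$ and $V$ are two candidates with the same initial data and inhomogeneities: the linear and affine parts cancel, and one gets
\[
\|U-V\|_{L^p(\Omega;C([0,t];X))}\le \CF\int_0^t\|U-V\|_{L^p(\Omega;C([0,s];X))}\,\rmd s + 10D\sqrt p\, \CG\Bigl(\int_0^t\|U-V\|_{L^p(\Omega;C([0,s];X))}^2\,\rmd s\Bigr)^{1/2},
\]
from which Lemma \ref{lem:gronwallvar} forces $U=V$. Existence follows from Banach's fixed point theorem applied on a small interval $[0,T_0]$ where the same estimate shows $\Phi$ is a strict contraction (or, equivalently, by introducing an exponential weight $\|U\|_\lambda^p\ce \sup_{t\in [0,T]} e^{-\lambda t}\|U(t)\|_{L^p(\Omega;X)}^p$ for sufficiently large $\lambda$); a standard concatenation of solutions on $[T_0,2T_0],\ldots$ then extends the solution to $[0,T]$ and yields an element of $L^p(\Omega;C([0,T];X))$ by the a priori bound already established.

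The main obstacle is the structural mismatch between the deterministic convolution, which naturally lives in $L^1$ in time, and the stochastic convolution, whose maximal estimate in $2$-smooth Banach spaces is measured in the $L^2$ in time norm of the $\gamma$-radonifying norm. This forces the Gronwall inequality into the nonstandard form with a $\bigl(\int_0^t\phi^2\bigr)^{1/2}$ term, which is exactly what Lemma \ref{lem:gronwallvar} is tailored for. The replacement of Hilbert-space BDG by Theorem \ref{thm:maxIneqQuasiContractive} is the one conceptual step beyond the Hilbert-space proof of \cite{KliobaVeraar23}; everything else is a direct translation using $\|\cdot\|_{\gHX}$ and Proposition \ref{prop:leftIdeal} in place of the Hilbert-Schmidt norm.
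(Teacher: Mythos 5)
Your proposal is correct and follows essentially the same route as the paper, which proves this theorem by the contraction mapping argument of \cite[Thm.~4.3]{KliobaVeraar23} on (an adapted subspace of) $L^p(\Omega;C([0,\delta];X))$, with the Hilbert--Schmidt norm replaced by the $\gamma(H,X)$-norm and the Burkholder--Davis--Gundy inequality replaced by the maximal inequality of Theorem \ref{thm:maxIneqQuasiContractive}; your use of Lemma \ref{lem:gronwallvar} to handle the mixed $L^1$/$L^2$-in-time structure is exactly the intended mechanism. The only detail to be careful with is your parenthetical alternative via the weight $\sup_{t}e^{-\lambda t}\|U(t)\|_{L^p(\Omega;X)}^p$, which controls only the supremum of moments rather than the $L^p(\Omega;C([0,T];X))$-norm; your primary route (contraction on a small interval plus concatenation) is the one that actually works and matches the paper.
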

\begin{proof}
    The statement follows from an application of the contraction mapping theorem to the fixed point functional
    \begin{equation*}
        \Gamma v(t) \ce S(t)u_0 + \int_0^t S(t-s)F(s,v(s))\,\rmd s + \int_0^t S(t-s) G(s,v(s))\,\rmd W_H(s).
    \end{equation*}
    in the adapted subspace of $L^p(\Omega;C([0,\delta];X))$, as shown in \cite[Thm.~4.3]{KliobaVeraar23} for Hilbert spaces $X$. In order to apply the methods used in \cite{KliobaVeraar23}, we replace the space of Hilbert-Schmidt operators $\calL_2(H,X)$ by the space of $\gamma$-radonifying operators $\gamma(H,X)$ and applying the maximal inequality from Theorem \ref{thm:maxIneqQuasiContractive} instead of the Burkholder-Davis-Gundy inequality.
\end{proof}

Under a linear growth assumption on $F$ and $G$ on a $2$-smooth Banach space $Y$ embedding into $X$, the problem \eqref{eq:StEvolEqnFG_WP} is also well-posed on $Y$, as the following straightforward extension of \cite[Thm.~4.4]{KliobaVeraar23} illustrates.

\begin{theorem} 
\label{thm:wellposedY}
    Let $X$ and $Y$ be $(2,D)$-smooth Banach spaces, $Y \hra X$ and assume $A$ generates a $C_0$-contraction semigroup $(S(t))_{t \ge 0}$ on both $X$ and $Y$. Let $p \in [2,\infty)$, $u_0\in L^p_{\calF_0}(\Omega;Y)$ and let Assumption \ref{ass:WP} hold. Additionally, suppose that $f\in L^p_\calP(\Omega;L^1(0,T;Y))$, $g\in L_\calP^p(\Omega;L^2(0,T;\gHY))$, $F\from \Omega \times [0,T] \times Y \to Y$, $G\from  \Omega \times [0,T] \times Y \to \gHY$, and there are $\LF,\LG \ge 0$ such that for all $\omega \in\Omega$, $t \in [0,T]$ and $x \in Y$,
    \begin{equation*}
        \|\tilde{F}(\omega,t,x)\|_Y \le \LF(1+\|x\|_Y),~
        \|\tilde{G}(\omega,t,x)\|_\gHY \le \LG(1+\|x\|_Y).
    \end{equation*}
    Under these conditions, the unique mild solution $U\in L^p(\Omega;C([0,T];X))$ to \eqref{eq:StEvolEqnFG_WP} is in $L^p(\Omega;C([0,T];Y))$ and there is a constant $C\ge 0$ depending only on $p$, $D$, $T$, $\LF$, $\LG$ and $Y$ such that
    \begin{align*}
        \|U\|_{L^p(\Omega;C([0,T];Y))}\leq C\big(1+\|u_0\|_{L^p(\Omega;Y)}+ \|f\|_{p,1,Y}+\nn g\nn_{p,2,Y}\big).
    \end{align*}
\end{theorem}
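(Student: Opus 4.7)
The strategy is to combine the existence of a unique mild solution $U\in L^p(\Omega;C([0,T];X))$ supplied by Theorem \ref{thm:wellposed} with a Picard iteration carried out in parallel in $L^p(\Omega;C([0,T];Y))$, and then to lift the uniform bound on the iterates to the solution by a Fatou-type argument. This follows the pattern of \cite[Thm.~4.4]{KliobaVeraar23}, with the Hilbert--Schmidt space replaced by $\gHY$ and Burkholder--Davis--Gundy replaced by the maximal inequality of Theorem \ref{thm:maxIneqQuasiContractive} applied to $(S(t))_{t\ge 0}$ on the $(2,D)$-smooth space $Y$. A key structural ingredient for the compactness step is that every $2$-smooth Banach space is uniformly smooth and therefore reflexive.

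Concretely, set $V_0\equiv 0$ and iterate $V_{n+1}\ce\Gamma V_n$ for the mild-solution functional $\Gamma$ from the proof of Theorem \ref{thm:wellposed}. The Lipschitz hypothesis on $X$ in Assumption \ref{ass:WP} forces $V_n\to U$ in $L^p(\Omega;C([0,T];X))$. Separately, contractivity of $S$ on $Y$, Minkowski's inequality, the linear-growth bounds on $Y$, and Theorem \ref{thm:maxIneqQuasiContractive} applied on $Y$ yield by induction that each $V_n\in L^p(\Omega;C([0,T];Y))$ and that
\[
\phi_n(t)\ce\Big\|\sup_{s\in[0,t]}\|V_n(s)\|_Y\Big\|_{L^p(\Omega)}
\]
satisfies
\[
\phi_{n+1}(t)\le\alpha+\beta\Big(\int_0^t\phi_n(s)^2\ds\Big)^{1/2},
\]
where $\alpha$ absorbs $\|u_0\|_{L^p(\Omega;Y)}$, $\LF T+\|f\|_{p,1,Y}$, and $C_{p,D}(\LG\sqrt{T}+\nn g\nn_{p,2,Y})$, while $\beta$ depends only on $\LF,\LG,p,D,T$ (after Cauchy--Schwarz rewrites the drift integral as an $L^2$-norm). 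Passing to the monotone majorant $\chi_n\ce\max_{0\le k\le n}\phi_k$ shows that $\chi_n$ obeys the same recursion, so either a direct induction against the Gronwall majorant or an application of Lemma \ref{lem:gronwallvar} to the monotone limit yields a uniform bound $\phi_n(t)\le h(t)$ with $h(T)$ of the form asserted in the conclusion.

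To transfer this bound to $U$, extract a subsequence with $V_{n_k}\to U$ almost surely in $C([0,T];X)$. Fatou's lemma applied to $\sup_t\|V_{n_k}(t)\|_Y^p$ produces a random variable $C\in L^p(\Omega)$ with $C(\omega)\ge\liminf_k\sup_t\|V_{n_k}(\omega,t)\|_Y$ for a.e.\ $\omega$. Along a further $\omega$-dependent subsequence, $\{V_{n_{k_j}}(\omega,t)\}_j$ is $Y$-bounded for every fixed $t$, so reflexivity of $Y$ delivers a weakly convergent sub-subsequence in $Y$; the continuous embedding $Y\hra X$ is weak-to-weak continuous, forcing the weak $Y$-limit to coincide with the strong $X$-limit $U(\omega,t)$. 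This places $U(\omega,t)\in Y$ with $\|U(\omega,t)\|_Y\le C(\omega)$, and taking $L^p(\Omega)$-norms controls $\bigl\|\sup_t\|U(t)\|_Y\bigr\|_{L^p(\Omega)}$ by $h(T)$. Finally, $Y$-continuity of $U$ is read off the mild formula: $t\mapsto S(t)u_0$ is $Y$-continuous, $t\mapsto\int_0^tS(t-s)F(s,U(s))\ds$ is $Y$-continuous because $F(\cdot,U)\in L^1(0,T;Y)$ a.s.\ by linear growth and the bound just established, and the stochastic convolution admits a $Y$-continuous modification by Theorem \ref{thm:maxIneqQuasiContractive}; uniqueness of continuous modifications identifies $U$ with this $Y$-valued version.

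The main obstacle is precisely this last transfer step: $C([0,T];Y)$ is not reflexive, so no path-level weak compactness is available, and the argument has to descend to pointwise-in-$t$ weak compactness in the reflexive space $Y$ (where $2$-smoothness finally pays off) and then climb back up to path continuity by re-inspecting the mild equation.
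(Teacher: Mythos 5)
Your proposal is correct and follows essentially the same route the paper intends, namely the argument of \cite[Thm.~4.4]{KliobaVeraar23}: Picard iterates converging to $U$ in $X$, a uniform bound on their $C([0,T];Y)$-norms via linear growth on $Y$, Theorem \ref{thm:maxIneqQuasiContractive} applied on $Y$, and the Gronwall recursion, followed by a Fatou/weak-compactness transfer of the bound to $U$ and a re-reading of the mild formula for $Y$-continuity. Your only genuine addition is the observation that $(2,D)$-smoothness implies uniform smoothness and hence reflexivity of $Y$, which correctly replaces the Hilbert-space weak compactness used in the original proof.
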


Having established well-posedness, we now turn to stability. Let $R_k\from  X \to X$ be a time discretisation scheme with time step $k>0$ on a uniform grid $\{t_j=jk:~j=0,\ldots, N_k\}\subseteq [0,T]$ with final time $T=t_{N_k}>0$ and $N_k=\frac{T}{k}\in \N$ being the number of time steps. We consider the temporal approximations of the mild solution to \eqref{eq:StEvolEqnFG_WP}
given by $U^0 \ce u_0$ and
\begin{align}
\label{eq:defUjStab}
    U^j &\ce R_k U^{j-1} + k R_k F(t_{j-1},U^{j-1})+ R_k G(t_{j-1},U^{j-1})\Delta W_j
\end{align}
with Wiener increments $\Delta W_j \ce W_H(t_j)-W_H(t_{j-1})$ (see \eqref{eq:convradonW}). The above definition of $U^j$ can be reformulated as the discrete variation-of-constants formula
\begin{equation}
\label{eq:VoCUStab}
    U^j = R_k^ju_0+k\sum_{i=0}^{j-1}R_k^{j-i}F(t_i,U^i)+\sum_{i=0}^{j-1} R_k^{j-i} G(t_i,U^i)\Delta W_{i+1}
\end{equation}
for $j=0,\ldots,N_k$. 

The following stability result is a generalisation of \cite[Prop.~5.1]{KliobaVeraar23} to $2$-smooth Banach spaces. This requires replacing the dilation argument in the original proof by a martingale one based on Theorem \ref{thm:Pinelis}, which is the subject of the following lemma.

\begin{lemma}
\label{lem:PinelisBound}
    Let $X$ be a $(2,D)$-smooth Banach space, $N \in \N$ with $N \le N_k$, and $Q\from \Omega \times [0,T] \to \gHX$ be such that $Q_i \ce Q(\cdot,t_i)\in L^p(\Omega;\gHX)$ is $\calF_{t_i}$-measurable for $0 \le i \le N-1$. Suppose that $(R_k)_{k>0}$ is contractive. Then there is a constant $B_{p,D}\ge 0$ depending on $p$ and $D$ such that
    \begin{align*}
        \bigg\| \max_{0 \le j \le N}\Big\|\sum_{i=0}^{j-1} R_k^{j-i}Q_i \Delta W_{i+1}\Big\|\bigg\|_p \le B_{p,D} \Big(k\sum_{i=0}^{N-1} \|Q_i\|_{L^p(\Omega;\gHX)}^2\Big)^{1/2}.
    \end{align*}
\end{lemma}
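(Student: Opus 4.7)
The plan is to rewrite the discrete stochastic convolution as a perturbed martingale recursion and then invoke the Pinelis--Rosenthal--Burkholder inequality from Theorem \ref{thm:Pinelis}. Set
\[
  \widetilde{M}_j \ce \sum_{i=0}^{j-1} R_k^{j-i} Q_i \Delta W_{i+1}, \qquad j=0,1,\dots,N,
\]
with $\widetilde{M}_0 = 0$. A direct computation shows
\[
  \widetilde{M}_j = R_k \widetilde{M}_{j-1} + d_j, \qquad d_j \ce R_k Q_{j-1} \Delta W_j,
\]
so we are exactly in the setting of Theorem \ref{thm:Pinelis} with deterministic contractions $V_j = R_k$ (trivially strongly predictable). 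Since $Q_{j-1}$ is $\calF_{t_{j-1}}$-measurable and $\Delta W_j$ is independent of $\calF_{t_{j-1}}$ and symmetric, $(d_j)_{j=1}^N$ is a martingale difference sequence with conditionally symmetric increments. Theorem \ref{thm:Pinelis} in its symmetric form then gives
\[
  \bigl\|\widetilde{M}^\ast\bigr\|_p \le 5p\,\|d^\ast\|_p + 10D\sqrt{p}\,\|s(M)\|_p,
\]
where $M_j = \sum_{i=1}^j d_i$, and it remains to estimate both right-hand terms by $(k\sum_{i=0}^{N-1}\|Q_i\|_{L^p(\Omega;\gHX)}^2)^{1/2}$.

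For the quadratic-variation term, conditioning on $\calF_{t_{i-1}}$ and using that $\Delta W_i$ has the same law as a scaled cylindrical Gaussian together with the definition of the $\gamma$-radonifying norm yields
\[
  \E\!\bigl[\|d_i\|^2 \mid \calF_{t_{i-1}}\bigr] = k\,\|R_k Q_{i-1}\|_{\gHX}^2 \le k\,\|Q_{i-1}\|_{\gHX}^2
\]
by contractivity of $R_k$ and the ideal property (Proposition \ref{prop:leftIdeal}). Summing and applying Minkowski in $L^{p/2}(\Omega)$ (valid since $p\ge 2$) gives
\[
  \|s(M)\|_p^2 \le k \sum_{i=0}^{N-1} \|Q_i\|_{L^p(\Omega;\gHX)}^2.
\]

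The maximal term $\|d^\ast\|_p$ is handled crudely but sufficiently: since $p\ge 2$,
\[
  \|d^\ast\|_p^p \le \sum_{i=1}^N \E\|d_i\|^p,
\]
and the Kahane--Khintchine inequality, applied conditionally on $\calF_{t_{i-1}}$, bounds $\E\|d_i\|^p$ by $\kappa_{p,2}^p\, k^{p/2}\, \|Q_{i-1}\|_{L^p(\Omega;\gHX)}^p$ (again using contractivity of $R_k$ and the ideal property). The comparison $\|a\|_{\ell^p} \le \|a\|_{\ell^2}$ for $p\ge 2$ on finite sequences then produces
\[
  \|d^\ast\|_p \le \kappa_{p,2}\,k^{1/2}\,\Bigl(\sum_{i=0}^{N-1} \|Q_i\|_{L^p(\Omega;\gHX)}^2\Bigr)^{1/2}.
\]
Combining the two bounds gives the claim with a constant $B_{p,D}$ depending only on $p$ and $D$.

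The main subtlety I anticipate is the passage from the conditional Gaussian moment identity to a clean $\gHX$-bound in the general $2$-smooth setting: in Hilbert spaces this is an elementary Itô isometry, while here it requires using the definition of the $\gamma$-radonifying norm together with Kahane--Khintchine to convert the conditional $L^2$ identity into $L^p$ estimates. Once that conversion is set up, the remainder is a bookkeeping exercise, and the real conceptual work — replacing the dilation argument of \cite{KliobaVeraar23} — is already done by framing the recursion for $\widetilde{M}_j$ in the form required by Theorem \ref{thm:Pinelis}.
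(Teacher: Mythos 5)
Your proposal is correct and follows the paper's proof in its essential structure: the same recursion $\widetilde{M}_j = R_k\widetilde{M}_{j-1} + d_j$ with $d_j = R_kQ_{j-1}\Delta W_j$, the same verification of the martingale and conditional-symmetry hypotheses, the same application of Theorem \ref{thm:Pinelis}, and the same treatment of $\|s(M)\|_p$ via the conditional second-moment identity, contractivity, the ideal property, and Minkowski in $L^{p/2}(\Omega)$.

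The one place you diverge is the estimate of $\|d^\ast\|_p$. The paper bounds $\|d^\ast\|_p \le 2\|M^\ast\|_p$, applies Doob's maximal inequality to reduce to $\|M_N\|_p$, rewrites $M_N$ as a stochastic integral of a step function, and invokes the It\^o isomorphism (Theorem \ref{thm:maxIneqQuasiContractive} with the trivial semigroup) to land on the square-function quantity. You instead use the crude bound $\|d^\ast\|_p^p \le \sum_{i}\E\|d_i\|^p$, control each $\E\|d_i\|^p$ by conditional Kahane--Khintchine for the Gaussian vector $R_kQ_{i-1}\Delta W_i$ (which is legitimate: conditionally on $\calF_{t_{i-1}}$ this is a centered $X$-valued Gaussian with $\E[\|d_i\|^2\mid\calF_{t_{i-1}}]=k\|R_kQ_{i-1}\|_{\gHX}^2$ by the definition of the $\gamma$-norm), and finish with $\|\cdot\|_{\ell^p}\le\|\cdot\|_{\ell^2}$ for $p\ge 2$. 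Both routes give a constant depending only on $p$ and $D$, and neither degrades with $N$, so your argument is a valid substitute; it is arguably more self-contained (no Doob, no It\^o isomorphism needed for this term), at the price of being a lossier union-bound-type estimate that would not survive in settings where one needs the sharper maximal-function control. No gaps.
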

\begin{proof}
    The bound follows from an application of Theorem \ref{thm:Pinelis} as illustrated in Step $1$ and a simplification of the two terms emerging in Steps $2$ and $3$.

    \textit{Step 1.} Define $\widetilde{M}_j \ce \sum_{i=0}^{j-1} R_k^{j-i}Q_i \Delta W_{i+1}$ and $M_j \ce \sum_{i=1}^j R_kQ_{i-1}\Delta W_i$ for $0 \le j \le N$. Further, define $d_j \ce M_j-M_{j-1}$ and $V_j \ce R_k$ for all $1 \le j \le N$. Then $\widetilde{M}_0=M_0=0$ by construction and $(\tilde{M}_j)_{j=0}^N$ is adapted because $Q_i$ is $\filtrF_{t_i}$- and thus also $\filtrF_{t_{j-1}}$-measurable and $\Delta W_{i+1}$ is $\filtrF_{t_{i+1}}$- and thus also $\filtrF_{t_j}$-measurable. Furthermore, $(M_j)_{j=0}^{N}$ is an $X$-valued martingale with conditionally symmetric increments since it is adapted and for $0 \le \ell \le j \le N$
    \begin{align*}
        \E(M_j|\filtrF_{t_\ell}) = \sum_{i=1}^j R_kQ_{i-1} \E(\Delta W_i|\filtrF_{t_\ell})
        = \sum_{i=1}^\ell R_kQ_{i-1}\Delta W_i = M_\ell
    \end{align*}
    by independence of $\Delta W_i$ of $\filtrF_{t_\ell}$ for all $i \ge \ell+1$. Consequently, $(d_j)_{j=1}^N$ is a martingale difference sequence, and Theorem \ref{thm:Pinelis} is applicable. It yields the bound
    \begin{align}
    \label{eq:PinelisBoundUnsimplified}
        \bigg\| \max_{0 \le j \le N}\Big\|\sum_{i=0}^{j-1} R_k^{j-i}Q_i \Delta W_{i+1}\Big\|\bigg\|_p = \|(\widetilde{M})^\ast\|_p \le 5p\|d^\ast\|_p+10D\sqrt{p}\|s(M)\|_p,
    \end{align}
    where $d^\ast = \max_{1 \le j \le N} \|d_j\|$ and $s(M)^2= \sum_{i=0}^{N-1} \E(\|M_{i+1}-M_i\|^2|\mid\filtrF_{t_i})$.

    \textit{Step 2.} To simplify the first term, we first apply the triangle inequality and Doob's maximal inequality \cite[Thm.~3.2.2]{AnalysisBanachSpacesI} before rewriting the Wiener increments as stochastic integrals to apply Itô's isomorphism as in Theorem \ref{thm:maxIneqQuasiContractive}. Lastly, making use of Minkowski's inequality in $L^{p/2}(\Omega)$, contractivity of $R_k$ and the dominated convergence theorem in $L^p(\Omega)$, it follows that
    \begin{align}
    \label{eq:d*}
        \|d^*\|_p &= \Big\|\max_{1 \le j \le N} \|M_j-M_{j-1}\|\Big\|_p \le 2 \|M^*\|_p \le \frac{2p}{p-1} \|M_{N}\|_p\nonumber\\
        &= \frac{2p}{p-1} \bigg\| \sum_{i=0}^{N-1} \int_0^{t_N} \1_{(t_i,t_{i+1}]}(s) R_kQ_i\dWHs\bigg\|_{L^p(\Omega;X)}\nonumber\\
        &\le \frac{2pC_{p,D}}{p-1} \bigg\|\Big(\int_0^{t_N}\Big\| \sum_{i=0}^{N-1} \1_{(t_i,t_{i+1}]}(s) R_kQ_i \Big\|_\gHX^2 \ds\Big)^{1/2}\bigg\|_p\nonumber\\
        &= \frac{2pC_{p,D}}{p-1} \bigg\|\sum_{i=0}^{N-1}\int_{t_i}^{t_{i+1}}\| R_kQ_i\|_\gHX^2 \ds\bigg\|_{p/2}^{1/2}\nonumber\\
        &\le \frac{2pC_{p,D}}{p-1} \Big(k \sum_{i=0}^{N-1}\big\|\| Q_i\|_\gHX^2 \big\|_{p/2}\Big)^{1/2}
        = \frac{2pC_{p,D}}{p-1} \Big(k \sum_{i=0}^{N-1}\| Q_i\|_{L^p(\Omega;\gHX)}^2 \Big)^{1/2} .
    \end{align}

    \textit{Step 3.} Using that the Wiener increments $\Delta W_{i+1}$ are independent of $\calF_{t_i}$ and have variance $t_{i+1}-t_i=k$, we can bound the remaining term $\|s(M)\|_p$ in \eqref{eq:PinelisBoundUnsimplified} by
    \begin{align}
    \label{eq:s(M)}
        \|s(M)\|_p &= \bigg\|\Big(\sum_{i=0}^{N-1} \E\big(\|R_kQ_i\Delta W_{i+1}\|^2\mid \calF_{t_i}\big) \Big)^{1/2}\bigg\|_p\nonumber\\
        &\le \bigg\|\Big(\sum_{i=0}^{N-1} \|R_kQ_i\|_\gHX^2 \E(|\Delta W_{i+1}|^2\mid \filtrF_{t_i})\Big)^{1/2}\bigg\|_p \nonumber\\
        &=  \bigg\| k\sum_{i=0}^{N-1} \|R_kQ_i\|_\gHX^2\bigg\|_{p/2}^{1/2} 
        \le \Big(k \sum_{i=0}^{N-1} \| Q_i\|_{L^p(\Omega;\gHX)}^2\Big)^{1/2}.
    \end{align}
    The statement of the theorem is obtained with $B_{p,D}=10p^2(p-1)^{-1}C_{p,D}+10D\sqrt{p}$ from inserting \eqref{eq:d*} and \eqref{eq:s(M)} in \eqref{eq:PinelisBoundUnsimplified}.
\end{proof}

\begin{proposition}
\label{prop:stab}
    Let $X$ be a $(2,D)$-smooth Banach space, $p \in [2, \infty)$ and $u_0 \in L^p(\Omega;X)$. Suppose that Assumption \ref{ass:WP} holds and that $(R_k)_{k>0}$ is contractive. Then the discrete solution \eqref{eq:defUjStab} obtained using $(R_k)_{k>0}$ is stable. More precisely, for all $T>0$ there exists a constant $C_T$ independent of $N_k$ such that
    \begin{equation*}
        \bigg\| \max_{0 \le j \le N_k} \|U^j\|\bigg\|_p \le C_T\big(\|f\|_{p,\infty,X}+\nn g\nn_{p,\infty,X}+\|u_0\|_{L^p(\Omega;X)} \big) < \infty.
    \end{equation*}
\end{proposition}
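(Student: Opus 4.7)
The strategy is to start from the discrete variation-of-constants formula \eqref{eq:VoCUStab}, split it into its three natural terms, and derive a closed inequality for
\[
\phi_j \coloneqq \Bigl\|\max_{0\le i\le j}\|U^i\|\Bigr\|_p,
\]
which can then be resolved by the discrete Gronwall Lemma~\ref{lem:KruseGronwall}. Contractivity of $R_k$ immediately gives $\|R_k^j u_0\|\le \|u_0\|$, handling the initial-data term. For the drift, the pointwise estimate $\|k\sum_{i=0}^{j-1}R_k^{j-i}F(t_i,U^i)\|\le k\sum_{i=0}^{J-1}\|F(t_i,U^i)\|$ is nondecreasing in $j$; combining the splitting $F=\tilde F+f$ from Assumption~\ref{ass:WP} with the Lipschitz bound on $\tilde F$ and taking $L^p(\Omega)$-norms contributes a term bounded by $C_F k\sum_{i=0}^{J-1}\phi_i + T\|f\|_{p,\infty,X}$.

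The delicate term is the stochastic convolution $\sum_{i=0}^{j-1}R_k^{j-i}G(t_i,U^i)\Delta W_{i+1}$, for which I would apply Lemma~\ref{lem:PinelisBound} with $Q_i=G(t_i,U^i)$. The required $\F_{t_i}$-measurability of $Q_i$ follows by an easy induction from the recursion \eqref{eq:defUjStab}. The lemma produces a bound $B_{p,D}\bigl(k\sum_{i=0}^{J-1}\|G(t_i,U^i)\|_{L^p(\Omega;\gHX)}^2\bigr)^{1/2}$; splitting $G=\tilde G+g$, using the Lipschitz bound on $\tilde G$, and applying Minkowski in $\ell^2$ yields a contribution of the form $B_{p,D}C_G\bigl(k\sum_{i=0}^{J-1}\phi_i^2\bigr)^{1/2}+B_{p,D}T^{1/2}\nn g\nn_{p,\infty,X}$.

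Collecting all three bounds and using Cauchy--Schwarz, $k\sum_{i=0}^{J-1}\phi_i\le T^{1/2}(k\sum_{i=0}^{J-1}\phi_i^2)^{1/2}$, to absorb the drift term into square-function form brings the estimate into the hypothesis of Lemma~\ref{lem:KruseGronwall}:
\[
\phi_J \le \alpha + \beta\Bigl(\sum_{i=0}^{J-1}\phi_i^2\Bigr)^{1/2},
\]
with $\alpha$ linear in the three data norms on the right-hand side of the proposition, and $\beta=\sqrt{k}\,(C_F T^{1/2}+B_{p,D}C_G)$. Since $\beta^2 N_k\le T(C_F T^{1/2}+B_{p,D}C_G)^2$ is independent of $k$, Lemma~\ref{lem:KruseGronwall} delivers $\phi_{N_k}\le C_T\alpha$ with $C_T$ depending only on $T$, $p$, $D$, $C_F$, $C_G$, which is the claim; finiteness of $\alpha$ under Assumption~\ref{ass:WP} (strengthened by the $L^\infty$-in-time hypothesis on $f,g$ implicit in the right-hand side) gives the concluding $<\infty$.

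The main obstacle is precisely the stochastic convolution: the process $j\mapsto \sum_{i=0}^{j-1}R_k^{j-i}Q_i\Delta W_{i+1}$ is \emph{not} a martingale in $j$ because the exponent of $R_k$ shifts with $j$, so Doob's inequality cannot be used directly. In \cite{KliobaVeraar23} this is handled by dilating $R_k$ to a contractive semigroup on a larger Hilbert space, but such a dilation is unavailable on a general $2$-smooth Banach space, which is exactly why the martingale-based maximal inequality of Lemma~\ref{lem:PinelisBound} is indispensable. Once it is in place, the rest of the argument is structurally parallel to the Hilbert case of \cite[Prop.~5.1]{KliobaVeraar23}.
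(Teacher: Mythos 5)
Your proposal is correct and follows essentially the same route as the paper: the discrete variation-of-constants formula, contractivity for the initial-data term, linear growth of $\tilde F$ and $\tilde G$ together with Lemma \ref{lem:PinelisBound} for the stochastic convolution, Cauchy--Schwarz to put the drift contribution into square-function form, and the discrete Gronwall Lemma \ref{lem:KruseGronwall}. Your closing remark about why the convolution fails to be a martingale and why the martingale-based maximal inequality replaces the dilation argument of the Hilbert-space case is exactly the point the paper emphasises.
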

\begin{proof}
    Let $N \in \{0,\ldots,N_k\}$ and $\vp_N \ce \| \max_{0 \le j \le N} \|U^j\|\|_p$. Then the variation-of-constants formula \eqref{eq:VoCUStab} and contractivity of $R_k$ allow us to bound
    \begin{align}
    \label{eq:boundStabilityProof}
        \vp_N &\le \|u_0\|_{L^p(\Omega;X)} + k \sum_{i=0}^{N-1} \bigg\|\max_{0 \le j \le i}\|F(t_j,U^j)\|\bigg\|_p+ \bigg\|\max_{0 \le j \le N} \bigg\|\sum_{i=0}^{j-1} R_k^{j-i}G(t_i,U^i) \Delta W_{i+1}\bigg\|\bigg\|_p.
    \end{align}
    Invoking linear growth of $\tilde{F}$ and pathwise continuity of $f$ for the second term, we obtain the bound
    \begin{align}
    \label{eq:stabProof2ndTerm}
        k\sum_{i=0}^{N-1} \Big\|\max_{0 \le j \le i}\|F(t_j,U^j)\|\Big\|_p 
        &\le k\sum_{i=0}^{N-1}\Big(\CF\Big\|\max_{0\le j \le i}\|U^j\| \Big\|_p + \|f\|_{p,\infty,X}\Big)\nonumber\\
        &= t_N \|f\|_{p,\infty,X}  + \CF k \sum_{i=0}^{N-1} \varphi_i \le t_N \|f\|_{p,\infty,X}  + \CF \sqrt{t_N} \Big(k \sum_{i=0}^{N-1} \varphi_i^2 \Big)^{1/2},
    \end{align}
    where we have used the Cauchy-Schwarz inequality in the last step.\\
    To the last term in \eqref{eq:boundStabilityProof} we apply Lemma \ref{lem:PinelisBound} with $Q_i\ce G(t_i,U^i)$ for $0 \le i \le N-1$, which together with linear growth of $\tilde{G}$ yields
    \begin{align}
    \label{eq:stabProof3rdTerm}
        \bigg\|\max_{0 \le j \le N} \bigg\|\sum_{i=0}^{j-1}  R_k^{j-i}G(t_i,U^i) \Delta W_{i+1}\bigg\|\bigg\|_p &\le  B_{p,D} \Big( k \sum_{i=0}^{N-1} \|G(t_i,U^i)\|_{L^p(\Omega;\gHX)}^2 \Big)^{1/2}\nonumber\\
        &\le B_{p,D} \Big( k \sum_{i=0}^{N-1} \big(\nn g\nn_{p,\infty,X}+\CG\|U^i\|_{L^p(\Omega;X)}\big)^2 \Big)^{1/2}\nonumber\\
        &\le \sqrt{2} B_{p,D} \sqrt{t_N} \nn g\nn_{p,\infty,X} +\sqrt{2} B_{p,D} \CG \Big( k \sum_{i=0}^{N-1} \vp_i^2 \Big)^{1/2}.
    \end{align}
   Inserting \eqref{eq:stabProof2ndTerm} and  \eqref{eq:stabProof3rdTerm} in \eqref{eq:boundStabilityProof} followed by an application of the discrete version of Gronwall's inequality from Lemma \ref{lem:KruseGronwall} results in 
    \begin{align*}
        \vp_N^2&\le \sqrt{C_{\ee}}\ee^{C_{\ee}/2}\big(\|u_0\|_{L^p(\Omega;X)} + t_N \|f\|_{p,\infty,X}+ \sqrt{2} B_{p,D} \sqrt{t_N} \nn g\nn_{p,\infty,X} \big)
    \end{align*}
    with $C_{\ee}\ce 1+\CF^2t_N+2B_{p,D}^2\CG^2t_N$, which implies the desired statement for $N=N_k$ noting that $t_{N_k}=T$.
\end{proof}

Under the assumption of additional regularity in the structure of $F$ and $G$ and smooth initial data $u_0$, pathwise uniform convergence rates are obtained. We would like to draw the reader's attention to the difference in notation to Section \ref{sec:convergence}: As no additional regularity in the structure of nonlinearity and noise is assumed in that section, the following error estimate from Theorem \ref{thm:convergenceRate} does not apply to $U$ but only to its regularised counterpart $\mU$. This is the subject of Corollary \ref{cor:regDiscrError}.

\begin{assumption}\label{ass:FG_structured}
    Let $X,Y$ be $2$-smooth Banach spaces such that $Y \hra X$ continuously, and let $p \in [2,\infty)$.
    Let $F\from \Omega \times [0,T] \times X \to X, F(\omega, t,x) = \tilde{F}(\omega,t,x) + f(\omega,t)$ and $G\from \Omega \times [0,T] \times X \to \gHX, G(\omega, t,x) = \tilde{G}(\omega,t,x) + g(\omega,t)$ be strongly $\calP\otimes \calB(X)$-measurable and such that $\tilde{F}(\cdot,\cdot,0) = 0$ as well as $\tilde{G}(\cdot,\cdot,0) = 0$. Suppose that
    \begin{enumerate}[label=(\alph*)]
        \item\label{item:FGstruct_Lipschitz} \emph{(global Lipschitz continuity on $X$)} there exist constants $\CF, \CG \ge 0$ such that for all $\omega \in \Omega, t \in [0,T]$, and $x,y\in X$, it holds that
        \begin{align*}
            \|\tilde{F}(\omega,t,x)-\tilde{F}(\omega,t,y)\| &\le \CF\|x-y\|,\\
            \|\tilde{G}(\omega,t,x)-\tilde{G}(\omega,t,y)\|_\gHX &\le \CG\|x-y\|,
        \end{align*}
        \item \label{item:FGstruct_Hölder} \emph{(Hölder continuity with values in $X$)} for some $\alpha \in (0,1]$,
        \begin{align*}
            C_{\alpha,F}\ce \sup_{\omega\in \Omega, x \in X} [F(\omega,\cdot,x)]_\alpha <\infty,~C_{\alpha,G}\ce \sup_{\omega\in \Omega, x \in X} [G(\omega,\cdot,x)]_\alpha <\infty,
        \end{align*}
        \item \label{item:FGstruct_Yinvariance} \emph{($Y$-invariance)} $F\from \Omega \times [0,T] \times Y \to Y$ and $G\from \Omega \times [0,T] \times Y \to \gHY$ are strongly $\calP\otimes \calB(Y)$-measurable, $f \in L^p_\calP(\Omega; C([0,T];Y))$, and $g \in L^p_\calP(\Omega; C([0,T];\gHY))$,
        \item \label{item:FGstruct_linearGrowth}  \emph{(linear growth on $Y$)} there exist constants $\LF, \LG \ge 0$ such that for all $\omega \in \Omega, t \in [0,T]$, and $x\in Y$, it holds that
        \begin{equation*}
            \|\tilde{F}(\omega,t,x)\|_Y \le \LF(1+\|x\|_Y),~ \|\tilde{G}(\omega,t,x)\|_\gHY \le \LG(1+\|x\|_Y).
        \end{equation*}
    \end{enumerate}
\end{assumption}

Along the lines of the proof of \cite[Thm.~6.3]{KliobaVeraar23} in the Hilbert space case, we obtain the following error estimate in $2$-smooth Banach spaces. 

\begin{theorem}
\label{thm:convergenceRate}
    Suppose that Assumption \ref{ass:FG_structured} holds for some $\alpha \in (0,1]$ and $p \in [2,\infty)$. Let $A$ be the generator of a $C_0$-contraction semigroup $(S(t))_{t \ge 0}$ on both $X$ and $Y$.
    Let $(R_{k})_{k>0}$ be a time discretisation scheme that is contractive on $X$ and $Y$. Assume $R$ approximates $S$ to order $\alpha$ on $Y$. Suppose that $Y \hra D_A(\alpha,\infty)$ continuously if $\alpha \in (0,1)$ or $Y \hra D(A)$ continuously if $\alpha=1$. Let $u_0 \in L_{\calF_0}^p(\Omega;Y)$. Denote by $U$ the mild solution of \eqref{eq:StEvolEqnFG_WP} and by $(U^j)_{j=0,\ldots,N_k}$ the temporal approximations as defined in \eqref{eq:defUjStab}. Then for $N_k \ge 8$ there are constants $C_1,C_2,C_3,C_4 \ge 0$ independent of $k$ such that
    \begin{equation*}
        \bigg\lVert\max_{0 \le j \le N_k}\|U(t_j)-U^j\|\bigg\rVert_p
        \le C_1 k+ C_2k^{1/2}
        +\Big(C_3 +C_4\log \Big(\frac{T}{k}\Big)\Big)k^{\alpha}.
    \end{equation*}
    In particular, the approximations $(U^j)_j$ converge at rate $\min\{\alpha,\frac{1}{2}\}$ up to a logarithmic correction factor as $k \to 0$.
\end{theorem}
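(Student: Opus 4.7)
The plan is to follow the Hilbert-space proof of \cite[Thm.~6.3]{KliobaVeraar23} closely, substituting the Banach-space tools developed in the preceding subsections for their Hilbert-space counterparts. As a preparatory step I would invoke Theorem \ref{thm:wellposedY} to upgrade the mild solution to $U \in L^p(\Omega;C([0,T];Y))$, exploiting the $Y$-invariance and linear growth of $F$ and $G$ from Assumption \ref{ass:FG_structured}\,(c)--(d). This $Y$-valued pathwise regularity of $U$, and hence of $F(\cdot,U(\cdot))$ and $G(\cdot,U(\cdot))$, is what will feed into all subsequent scheme estimates.

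Writing $e_j := U(t_j) - U^j$ and subtracting the discrete variation-of-constants formula \eqref{eq:VoCUStab} from the mild-solution formula at $t = t_j$, I would split $e_j$ into the initial-data contribution $(S(t_j) - R_k^j)u_0$ together with, for each of drift and diffusion, three further pieces: a \emph{quadrature} piece replacing $S(t_j-s)F(s,U(s))$ (resp.\ $G$) by its piecewise-constant version $S(t_j-t_i)F(t_i,U(t_i))$ on each subinterval $[t_i,t_{i+1}]$; a \emph{scheme} piece involving $[S(t_j-t_i)-R_k^{j-i}]F(t_i,U(t_i))$ (resp.\ $G$); and a \emph{Lipschitz} piece $R_k^{j-i}[F(t_i,U(t_i))-F(t_i,U^i)]$ (resp.\ $G$). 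The initial-data and the two scheme pieces contribute $O(k^\alpha)$ via the order-$\alpha$ approximation assumption combined with Assumption \ref{ass:FG_structured}\,(d). The drift quadrature is $O(k)$ by combining the semigroup continuity bound $\|(I-S(h))y\|_X \lesssim h^\alpha\|y\|_Y$ (valid through $Y\hra D_A(\alpha,\infty)$, or $Y\hra D(A)$ when $\alpha=1$), the temporal H\"older continuity from Assumption \ref{ass:FG_structured}\,(b), and the regularity of $U$ inherited from the mild formula. The diffusion quadrature is $O(k^{1/2})$ via Theorem \ref{thm:maxIneqQuasiContractive} applied to the stochastic convolution, together with the H\"older regularity of $G$ and of the stochastic part of $U$. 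The two Lipschitz pieces reduce, by global Lipschitz continuity of $F,G$ on $X$, via Lemma \ref{lem:PinelisBound} applied to $Q_i := G(t_i,U(t_i))-G(t_i,U^i)$ on the stochastic side and via Cauchy--Schwarz on the drift side, to an expression of the form $\beta\bigl(k\sum_{i<j}\varphi_i^2\bigr)^{1/2}$ with $\varphi_j := \bigl\|\max_{i\le j}\|e_i\|\bigr\|_p$. Assembling these estimates gives $\varphi_j \le \alpha_k + \beta\bigl(k\sum_{i<j}\varphi_i^2\bigr)^{1/2}$ with $\alpha_k$ of the stated form, and Lemma \ref{lem:KruseGronwall} closes the argument.

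The main obstacle is the diffusion scheme piece $\sum_{i<j}[S(t_j-t_i)-R_k^{j-i}]G(t_i,U(t_i))\Delta W_{i+1}$: it is neither a stochastic convolution driven by $S$ nor a single discrete convolution driven by one contractive scheme, so neither Theorem \ref{thm:maxIneqQuasiContractive} nor Lemma \ref{lem:PinelisBound} applies directly, and the max over $j$ cannot be extracted as a martingale maximum in $j$. In \cite{KliobaVeraar23} this is circumvented by a dilation argument reducing $S$ to a unitary group, a tool unavailable in general $2$-smooth Banach spaces. I would therefore combine the telescoping identity $S(mk)-R_k^m = \sum_{\ell=0}^{m-1} R_k^\ell (S(k)-R_k) S((m-\ell-1)k)$ with Lemma \ref{lem:PinelisBound} applied to the inner discrete stochastic convolutions driven by the contractive scheme $S(k)$, and control the resulting outer maximum in $j$ through a careful dyadic decomposition of the index set; this last bookkeeping step is what produces the logarithmic correction factor $\log(T/k)$ in the final rate.
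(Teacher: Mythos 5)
Your skeleton --- upgrading $U$ to $L^p(\Omega;C([0,T];Y))$ via Theorem \ref{thm:wellposedY}, splitting the error into initial-value, quadrature, scheme and Lipschitz pieces, treating the continuous stochastic convolution with Theorem \ref{thm:maxIneqQuasiContractive} and the discrete ones with Lemma \ref{lem:PinelisBound}, and closing with Lemma \ref{lem:KruseGronwall} --- is exactly the route the paper takes: its proof is by reference to \cite[Thm.~6.3]{KliobaVeraar23}, with $\calL_2(H,\cdot)$ replaced by $\gamma(H,\cdot)$ and every dilation-based maximal estimate replaced by Lemma \ref{lem:PinelisBound}. You have also correctly isolated the only genuinely delicate term, the stochastic scheme piece, and correctly identified it as the source of the logarithm.

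Your proposed treatment of that term, however, has a gap. After exchanging the order of summation, the telescoping identity $S(mk)-R_k^m=\sum_{\ell=0}^{m-1}R_k^\ell(S(k)-R_k)S((m-\ell-1)k)$ turns the scheme piece into $\sum_{\ell=0}^{j-1}R_k^\ell(S(k)-R_k)v_{j-\ell-1}$ with $v_n=\sum_{i\le n}S(t_{n-i})Q_i\Delta W_{i+1}$. Each summand is of size $C_\alpha k^\alpha\max_n\|v_n\|_Y$ and no better --- there is no decay in $\ell$ --- so the triangle inequality over $\ell$ produces a factor $N_k$, i.e.\ a bound of order $N_kk^\alpha=Tk^{\alpha-1}$, and grouping the $\ell$'s dyadically does not help: the block $\ell\in[2^m,2^{m+1})$ still contributes $2^m$ terms each of size $k^\alpha$. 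The mechanism that actually produces the logarithm is different: one uses the cocycle splitting
\[
S(t_{j-i})-R_k^{j-i}=S(t_{j-l})\bigl(S(t_{l-i})-R_k^{l-i}\bigr)+\bigl(S(t_{j-l})-R_k^{j-l}\bigr)R_k^{l-i},
\]
iterated along a dyadic tree of intermediate indices $l$, so that the order-$\alpha$ approximation bound is invoked only once per dyadic scale --- $O(\log_2 N_k)$ times in total (whence the hypothesis $N_k\ge 8$) --- and at each scale it is paired with a maximal inequality for a discrete convolution driven by a \emph{single} contraction ($S(k)$ or $R_k$), which is exactly where Lemma \ref{lem:PinelisBound} substitutes for the Hilbert-space dilation. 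As written, your argument for this term does not close; the remainder of the proposal is sound. (A minor bookkeeping point: the drift quadrature piece is $O(k^\alpha)+O(k^{1/2})$ rather than $O(k)$, but since the final bound contains all three powers this does not affect the conclusion.)
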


As in the Hilbert space case, the logarithmic factor is not required if the splitting scheme is used, which is given by $R_k=S(k)$.

\begin{remark}
    Provided that the semigroup $(S(t))_{t \ge 0}$ has a dilation, the error bound from Theorem \ref{thm:convergenceRate} extends to the full time interval $[0,T]$. More precisely, for a piecewise constant extension $\tilde{U}\from [0,T] \to L^p(\Omega;X)$ of $(U^j)_{j=0,\ldots,N_k}$, there is a constant $C \ge 0$ such that
    \begin{equation}
    \label{eq:errorFullInterval}
        \bigg\lVert\sup_{t\in [0,T]}\|U(t)-\tilde{U}(t)\|\bigg\rVert_p
        \le Ck^{1/2}+C
        \Big(1 +\log \Big(\frac{T}{k}\Big)\Big)k^{\alpha}.
    \end{equation}
    under the assumptions of Theorem \ref{thm:convergenceRate} and an additional integrability assumption on $f$ and $g$.  If the time discretisation scheme used is the splitting scheme, i.e., $R_k=S(k)$ for $k>0$, the optimal rate $(1+\sqrt{\log(T/k)})k^{1/2}$ can be obtained in \eqref{eq:errorFullInterval}. It is optimal as it coincides with the modulus of continuity of the Brownian motion. The proof of \eqref{eq:errorFullInterval} carries over verbatim from the Hilbert space case, given the dilation of the semigroup. For further details, we refer to \cite[Section~6.3]{KliobaVeraar23}.
    
    There are two main cases known in which semigroups on non-Hilbert spaces have a dilation: positive semigroups on $L^p$-spaces for $2<p<\infty$ \cite{Fendler1997} and (analytic) semigroups whose generator admits an $H^\infty$-calculus of angle less than $\frac{\pi}{2}$ \cite{FroehlichWeis2006}.
\end{remark}

\section{Pathwise Uniform Convergence for Irregular Nonlinearities}
\label{sec:convergence}

Our aim is to prove pathwise uniform convergence of contractive time discretisation schemes for nonlinear stochastic evolution equations of the form 
\begin{equation}
\label{eq:StEvolEqnFG}
    \rmd U = (AU + F(t,U))\,\rmd t + G(t,U)\,\rmd W_H(t),~~U(0)=u_0 \in L^p(\Omega;X)
\end{equation}
with $t \in [0,T]$, $T>0$, on a $2$-smooth Banach space $X$ with norm $\|\cdot\|$, where $W_H$ is an $H$-cylindrical Brownian motion for some Hilbert space $H$ and $p \in [2,\infty)$. The operator $A$ is assumed to generate a contractive $C_0$-semigroup $(S(t))_{t\ge 0}$ on $X$. The main novelty of our paper lies in not assuming further regularity in the structure of the nonlinearity or the noise. That is, we merely assume that $F\from \Omega \times [0,T]\times X \to X$ and $G\from \Omega \times [0,T]\times X \to \gHX$ and impose no further conditions on the images $F(\Omega \times [0,T]\times Y)$ for some $Y \hra X$ or even on $F(\Omega \times [0,T]\times X)$ being proper, more regular subspaces of $X$. Moreover, we allow rough initial data $u_0 \in L^p(\Omega;X)$. 
Further assuming progressive measurability and global Lipschitz continuity of $F$ and $G$ as detailed in Assumption \ref{ass:FG_conv}, we have the existence of the unique mild solution to \eqref{eq:StEvolEqnFG} given by a fixed point of
\begin{equation}
\label{eq:mildSoldefU}
    U(t) = S(t)u_0 + \int_0^t S(t-s)F(s,U(s))\,\rmd s + \int_0^t S(t-s)G(s,U(s))\,\rmd W_H(s)
\end{equation}
for $t \in [0,T]$, cf. Theorem \ref{thm:wellposed}.

For time discretisation, we employ a contractive time discretisation scheme $R\from  [0,\infty) \to \calL(X)$ with time step $k>0$ on a uniform grid $\{t_j=jk:~j=0,\ldots, N_k\}\subseteq [0,T]$ with final time $T=t_{N_k}>0$ and $N_k=\frac{T}{k}\in \N$ being the number of time steps. As in the previous section, the discrete solution is given by $U^0 \ce u_0$ and
\begin{align}
\label{eq:defUj}
    U^j &\ce R_k U^{j-1} + k R_k F(t_{j-1},U^{j-1})+ R_k G(t_{j-1},U^{j-1})\Delta W_j
\end{align}
for $j=1,\ldots,N_k$ with Wiener increments $\Delta W_j \ce W_H(t_j)-W_H(t_{j-1})$.

We summarise the conditions imposed on $F$ and $G$.

\begin{assumption}
\label{ass:FG_conv}
    Let $X$ be a $(2,D)$-smooth Banach space for some $D \ge 1$ and let $p \in [2,\infty)$.
    Let $F\from \Omega \times [0,T] \times X \to X$ and $G\from \Omega \times [0,T] \times X \to \gHX$ be strongly $\calP\otimes \calB(X)$-measurable. Suppose that
    \begin{enumerate}[label=(\alph*)]
        \item\label{item:FGconv_Lipschitz} \emph{(global Lipschitz continuity)} there exist constants $\CF, \CG \ge 0$ such that for all $\omega \in \Omega, t \in [0,T]$, and $x,y\in X$, it holds that
        \begin{align*}
            \|F(\omega,t,x)-F(\omega,t,y)\| &\le \CF\|x-y\|,\\ \|G(\omega,t,x)-G(\omega,t,y)\|_\gHX &\le \CG\|x-y\|,
        \end{align*}
        \item \emph{(linear growth)} there exist constants $\LF, \LG \ge 0$ such that for all $\omega \in \Omega, t \in [0,T]$, and $x\in X$, it holds that
        \begin{align*}
            \|F(\omega,t,x)\| &\le \LF(1+\|x\|),\quad \|G(\omega,t,x)\|_\gHX \le \LG(1+\|x\|),
        \end{align*}
        \item\label{item:FGconv_Hölder} \emph{(Hölder continuity)} for some $\alpha \in (0,1]$, 
        \begin{align*}
            C_{\alpha,F}\ce \sup_{\omega\in \Omega, x \in X} [F(\omega,\cdot,x)]_\alpha <\infty,~C_{\alpha,G}\ce \sup_{\omega\in \Omega, x \in X} [G(\omega,\cdot,x)]_\alpha <\infty.
        \end{align*}
    \end{enumerate}
\end{assumption}

Note that Hölder continuity of $F$ as stated above implies pathwise uniform continuity. Condition \ref{item:FGconv_Hölder} can be weakened to
the existence of some $\alpha \in (0,1]$ such that
\begin{equation*}
    \sup_{x \in X} \sup_{0 \le s \le t \le T} \frac{F(\cdot,t,x)-F(\cdot,s,x)}{(t-s)^\alpha} \in L^p(\Omega)
\end{equation*}
 and likewise for $G$, i.e., pathwise Hölder continuity uniformly in $x \in X$ is sufficient together with existence of $p$-th moments of the Hölder seminorms. Assumption \ref{ass:FG_conv} implies Assumption \ref{ass:WP} with $\tilde{F} \ce F-F(\cdot,\cdot,0)$, $f\ce F(\cdot,\cdot,0)$, and likewise for $G$, whence \eqref{eq:StEvolEqnFG} has a unique mild solution. Compared to Assumption \ref{ass:WP}, only condition \ref{item:FGconv_Hölder} is added. Under these assumptions, we obtain the main result of this paper on pathwise uniform convergence of the temporal approximations.

\begin{theorem}
\label{thm:convergence}
    Suppose that Assumption \ref{ass:FG_conv} holds for some $p\in [2,\infty)$ and $\alpha\in (0,1]$. Further, suppose that $A$ generates a $C_0$-contraction semigroup on both $X$ and $D(A)$, and let $u_0 \in L_{\calF_0}^p(\Omega;X)$. Let $(R_k)_{k>0}$ be a time discretisation scheme that is contractive on both $X$ and $D(A)$ and that approximates $S$ to order $\alpha$ on $D(A)$. 
    Denote by $U$ the mild solution of \eqref{eq:StEvolEqnFG} and by $(U^j)_{j=0,\ldots,N_k}$ the temporal approximations as defined in \eqref{eq:defUj}. Define the piecewise constant extension $\tilde{U}\from [0,T] \to L^p(\Omega;X)$ by $\tilde{U}(t) \ce U^j$ for $t \in [t_j,t_{j+1})$, $0 \le j \le N_k-1$, and $\tilde{U}(T) \ce U^{N_k}$. Then
    \begin{align}
        \lim_{k \to 0} \bigg\| \sup_{t \in [0,T]} \|U(t)-\tilde{U}(t)\|\bigg\|_p =0.
    \end{align}
\end{theorem}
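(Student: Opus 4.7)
The plan is to follow the regularisation programme described in the introduction. For each $m \in \N$, I set
\[
\mF(\omega,t,x) \ce mR(m,A)F(\omega,t,x),\quad \mG(\omega,t,x) \ce mR(m,A)G(\omega,t,x),\quad \muo \ce mR(m,A)u_0.
\]
Since $(S(t))_{t\ge 0}$ is contractive, the Hille--Yosida theorem gives $\|mR(m,A)\|_{\calL(X)} \le 1$ and $mR(m,A) \to I$ strongly as $m \to \infty$; moreover $\mF$ takes values in $D(A)$ and, by the ideal property (Proposition \ref{prop:leftIdeal}), $\mG$ takes values in $\gamma(H,D(A))$. Viewing the regularised equation as a stochastic evolution equation on the $2$-smooth Banach space $D(A)$, Theorem \ref{thm:wellposedY} yields $\mU \in L^p(\Omega;C([0,T];D(A)))$, and Theorem \ref{thm:convergenceRate} applied with $Y=D(A)$ gives, for every fixed $m$,
\[
\Bigl\|\max_{0 \le j \le N_k} \|\mU(t_j) - \mU^j\|\Bigr\|_p \to 0 \quad (k \to 0).
\]
By the triangle inequality $\|U(t_j) - U^j\| \le \|U(t_j) - \mU(t_j)\| + \|\mU(t_j) - \mU^j\| + \|\mU^j - U^j\|$, it thus suffices to control the continuous regularisation error $U - \mU$ and the discrete regularisation error $\mU^j - U^j$ as $m \to \infty$, the latter being uniform in $k$.

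For the continuous regularisation error, I subtract the two mild solution formulae to get
\[
U(t) - \mU(t) = S(t)(u_0 - \muo) + \int_0^t S(t-s)[F(s,U) - \mF(s,\mU)]\,\rmd s + \int_0^t S(t-s)[G(s,U) - \mG(s,\mU)]\dWHs,
\]
split each bracket as $F(s,U) - F(s,\mU) + (I - mR(m,A))F(s,\mU)$ (and analogously for $G$), apply Lipschitz continuity on the first summand, Theorem \ref{thm:maxIneqQuasiContractive} on the stochastic integral, and invoke the continuous Gronwall inequality (Lemma \ref{lem:gronwallvar}) to obtain
\[
\Bigl\|\sup_{t\in[0,T]}\|U(t) - \mU(t)\|\Bigr\|_p \lesssim \|u_0 - \muo\|_{L^p(\Omega;X)} + \|(I - mR(m,A))F(\cdot,U)\|_{p,1,X} + \nn (I - mR(m,A))G(\cdot,U)\nn_{p,2,X}.
\]
Since $\{F(s,U(s)):s\in[0,T]\}$ is a.s.\ compact in $X$ (by pathwise continuity of $F\circ U$) and the family $\{mR(m,A)\}_m$ is uniformly bounded and strongly convergent to $I$, the convergence is uniform on this compact set, so the right-hand side vanishes a.s.\ and then in $L^p(\Omega)$ by dominated convergence with dominator $2L_F(1 + \sup_t\|U(t)\|) \in L^p(\Omega)$. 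The $\gamma$-radonifying version of the argument uses Proposition \ref{prop:leftIdeal} and the standard fact that $L_n R \to LR$ in $\gHX$ when $L_n \to L$ strongly and $\sup_n\|L_n\|<\infty$.

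The delicate step---and the main obstacle---is bounding $\mU^j - U^j$ uniformly in $k$. Taking the difference of the two discrete schemes yields the recursion
\[
\mU^j - U^j = R_k(\mU^{j-1} - U^{j-1}) + kR_k[\mF(\mU^{j-1}) - F(U^{j-1})] + R_k[\mG(\mU^{j-1}) - G(U^{j-1})]\Delta W_j,
\]
and the clever splitting alluded to in the introduction decomposes the drift bracket as
\[
\mF(\mU^{j-1}) - F(U^{j-1}) = mR(m,A)[F(\mU^{j-1}) - F(U^{j-1})] + (mR(m,A) - I)[F(U^{j-1}) - F(U(t_{j-1}))] + (mR(m,A) - I)F(t_{j-1},U(t_{j-1})),
\]
and analogously for $G$. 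The first summand is Lipschitz-controlled by $\|\mU^{j-1} - U^{j-1}\|$ using $\|mR(m,A)\|\le 1$; the second by $2L_F\|U(t_{j-1}) - U^{j-1}\|$, i.e.\ by the \emph{full} error $E^{j-1} \ce U(t_{j-1}) - U^{j-1}$; the third is a pure regularisation term evaluated along the exact solution $U$, \emph{independent of $k$}. Folding the recursion back onto $E^j$ via $\|\mU^j - U^j\| \le \|E^j\| + \|U(t_j) - \mU(t_j)\|$, applying Lemma \ref{lem:PinelisBound} to the discrete stochastic convolution arising from the analogous split of the noise bracket, and using Cauchy--Schwarz on the drift sum, I reach an inequality of the form
\[
\Bigl\|\max_{0 \le i \le j}\|E^i\|\Bigr\|_p \le \alpha_{m,k} + \beta\Bigl(k\sum_{i=0}^{j-1}\bigl\|\max_{0\le\ell\le i}\|E^\ell\|\bigr\|_p^2\Bigr)^{1/2},
\]
where $\beta$ depends only on $p,D,L_F,L_G,T$ and $\alpha_{m,k}$ bundles the continuous regularisation error, the discretisation error of the regularised problem, and the $L^p$-Riemann-sums of the pure-regularisation terms. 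The discrete Gronwall inequality (Lemma \ref{lem:KruseGronwall}, applied after the substitution $\beta\mapsto\beta\sqrt{k}$ so that $\beta^2 kj \le \beta^2 T$) then yields $\|\max_j\|E^j\|\|_p \lesssim \alpha_{m,k}$ with constants independent of $k$.

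It remains to show $\alpha_{m,k} \to 0$ uniformly in $k$ as $m \to \infty$. The continuous regularisation error was handled in the second paragraph. The pure-regularisation Riemann sums are dominated by $T^{1/p}\sup_{s\in[0,T]}\|(mR(m,A)-I)F(s,U(s))\|$ (and the $\gamma$-analogue), which vanish in $L^p(\Omega)$ by exactly the compactness-plus-strong-convergence argument used in the continuous case, crucially \emph{independently of $k$}. Given $\eps > 0$, I first fix $m$ large enough so that these two contributions are below $\eps/2$, then, using Theorem \ref{thm:convergenceRate} for this fixed $m$, choose $k$ so small that the discretisation error of the regularised problem is also below $\eps/2$; this gives $\|\max_j\|U(t_j) - U^j\|\|_p \to 0$ as $k \to 0$. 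Finally, for the piecewise constant extension $\tilde U$, I use
\[
\sup_{t\in[0,T]}\|U(t) - \tilde U(t)\| \le \max_{0 \le j \le N_k - 1}\sup_{t\in[t_j,t_{j+1})}\|U(t) - U(t_j)\| + \max_{0 \le j \le N_k}\|U(t_j) - U^j\|,
\]
where the first term vanishes in $L^p(\Omega)$ as $k \to 0$ by a.s.\ uniform continuity of $U$ on $[0,T]$ and dominated convergence with dominator $2\sup_t\|U(t)\| \in L^p(\Omega)$.
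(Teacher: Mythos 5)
Your proposal is correct and follows essentially the same route as the paper: resolvent regularisation, the three-term error decomposition, uniform convergence on compact sets for the pure regularisation terms evaluated along the exact solution $U$, Lemma \ref{lem:PinelisBound} for the discrete stochastic convolution, and the discrete Gronwall inequality with the limits taken in the order $m\to\infty$ first, then $k\to 0$. The only blemish is the folding inequality $\|\mU^j-U^j\|\le\|E^j\|+\|U(t_j)-\mU(t_j)\|$, which omits the term $\|\mU(t_j)-\mU^j\|$; since that term belongs to $\alpha_{m,k}$ anyway, the argument is unaffected.
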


The main ingredient of the proof of this theorem consists of regularising the nonlinearity, the noise, and the initial values by 
\begin{equation}
\label{eq:defFnGnuon}
    \mF \ce mR(m,A)F,\quad \mG \ce mR(m,A)G,\quad \muo \ce mR(m,A)u_0
\end{equation}
for $m \in \N$. By construction, $\mF$ maps to $D(A)$, $\mG$ maps to $\gamma(H,D(A))$, and $\muo \in L^p(\Omega;D(A))$, giving the desired additional regularity in structure. Assumption \ref{ass:FG_conv} also implies existence and uniqueness of the mild solution $\mU$ of the regularised problem
\begin{equation}
\label{eq:StEvolEqnDefnU}
   \mU = (A\mU + \mF(\mU))\,\rmd t + \mG(\mU)\,\rmd W_H(t),~~\mU(0)=\muo \in X
\end{equation}
for $m \in \N$. It is given by a fixed point of
\begin{equation*}
    \mU(t) = S(t)\muo + \int_0^t S(t-s)\mF(\mU(s))\,\rmd s + \int_0^t S(t-s)\mG(\mU(s))\,\rmd W_H(s).
\end{equation*}

The following proposition lists useful properties of the regularised quantities. 
\begin{proposition}
\label{prop:prop}
    Let Assumption \ref{ass:FG_conv} hold and let $\mF,\mG,\muo$ be as defined in \eqref{eq:defFnGnuon} for $m \in \N$. Suppose that $A$ generates a $C_0$-contraction semigroup $(S(t))_{t \ge 0}$. Let $u_0 \in L_{\calF_0}^p(\Omega;X)$. Then the following statements hold. 
    \begin{enumerate}
        \item \emph{($D(A)$-invariance)} $\mF\from \Omega \times [0,T]\times D(A) \to D(A)$ and $\mG\from \Omega \times [0,T]\times D(A) \to \gamma(H,D(A))$ are strongly $\calP \otimes \calB(D(A))$-measurable and $\muo \in L_{\calF_0}^p(\Omega;D(A))$.
        \item \emph{(uniform Lipschitz continuity)} There are $\CF,\CG \ge 0$ such that for all $m \in \N$, $\omega \in \Omega, t \in [0,T]$, and $x,y\in X$, it holds that
        \begin{align*}
            \|\mF(\omega,t,x)-\mF(\omega,t,y)\| &\le \CF\|x-y\|,\\
            \|\mG(\omega,t,x)-\mG(\omega,t,y)\|_\gHX &\le \CG\|x-y\|.
        \end{align*}
        \item \emph{(linear growth on $D(A)$)} For all $m \in \N$, there are constants $\LFm, \LGm \ge 0$ such that for all $\omega \in \Omega, t \in [0,T]$, and $x\in Y$, it holds that
        \begin{align*}
            \|\mF(\omega,t,x)\|_{D(A)} &\le \LFm(1+\|x\|_{D(A)}),\\
            \|\mG(\omega,t,x)\|_{\gamma(H,D(A))} &\le \LGm(1+\|x\|_{D(A)}).
        \end{align*}
        \item \emph{(pointwise convergence)} As $m \to \infty$, $\mF$ and $\mG$ converge pointwise to $F$ and $G$, respectively. Moreover, $\muo \to u_0$ in $L^p(\Omega;X)$ as $m \to \infty$.
    \end{enumerate}
\end{proposition}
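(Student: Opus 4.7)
The proof plan is to verify the four items in order, using standard properties of the Yosida regularisation $mR(m,A)$ combined with the mapping properties of $\gamma$-radonifying operators. Throughout, the crucial observation is that since $(S(t))_{t \ge 0}$ is a $C_0$-contraction semigroup, the Hille--Yosida theorem gives $\|mR(m,A)\|_{\calL(X)} \le 1$ for every $m\in\N$, and $mR(m,A)$ maps $X$ into $D(A)$ by definition of the resolvent.

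For item (1), the $D(A)$-invariance of $\mF$ is immediate from $\im R(m,A) \seq D(A)$; for $\mG$ I invoke the left ideal property (Proposition \ref{prop:leftIdeal}) with $L_2 = mR(m,A) \in \calL(X,D(A))$ to conclude $\mG(\omega,t,x) = mR(m,A) G(\omega,t,x) \in \gamma(H,D(A))$. Strong $\calP\otimes \calB(D(A))$-measurability follows from composing the strongly measurable $F,G$ with the bounded operator $mR(m,A)$, and the same argument applied to $u_0$ yields $\muo \in L^p_{\calF_0}(\Omega;D(A))$. For item (2), uniform Lipschitz continuity is a one-line estimate: Assumption \ref{ass:FG_conv}\ref{item:FGconv_Lipschitz} combined with $\|mR(m,A)\|_{\calL(X)} \le 1$ gives the bound for $\mF$, and Proposition \ref{prop:leftIdeal} applied to the difference $G(\omega,t,x) - G(\omega,t,y)$ yields the bound for $\mG$, both with the same constants $\CF, \CG$ as for $F,G$.

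For item (3), I will use the resolvent identity $A R(m,A) = mR(m,A) - I$ to compute, for the graph norm,
\[
\|A \mF(\omega,t,x)\| = \|m(mR(m,A) - I)F(\omega,t,x)\| \le 2m\|F(\omega,t,x)\| \le 2m\LF(1+\|x\|),
\]
and since $\|x\| \le \|x\|_{D(A)}$, together with $\|\mF(\omega,t,x)\| \le \LF(1+\|x\|)$ this yields the claim with $\LFm \ce (1+2m)\LF$. The same computation for $\mG$, combined with the ideal property to handle the $\gamma$-norm on $D(A)$, delivers the analogous bound with $\LGm \ce (1+2m)\LG$. The only subtlety is that the $m$-dependence blows up; this is intended and consistent with the rest of the paper (convergence rates for the regularised problem are not uniform in $m$).

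For item (4), pointwise convergence of $\mF$ rests on the standard semigroup-theoretic fact that $mR(m,A)y \to y$ in $X$ as $m \to \infty$ for every $y \in X$ (this is a consequence of Hille--Yosida and the density of $D(A)$), applied to $y = F(\omega,t,x)$. For $\mG$, the corresponding convergence in $\gamma(H,X)$ requires the $\gamma$-Fatou/dominated convergence principle: if $T_m \to I$ strongly on $X$ with $\|T_m\|_{\calL(X)} \le 1$ and $Q \in \gHX$, then $T_m Q \to Q$ in $\gHX$ (see \cite[Prop.~9.4.2]{AnalysisBanachSpacesII}); applying this with $T_m = mR(m,A)$ and $Q = G(\omega,t,x)$ gives the claim. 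Finally, for $\muo \to u_0$ in $L^p(\Omega;X)$, I combine pointwise $\P$-a.s.\ convergence with the uniform bound $\|\muo\|_X \le \|u_0\|_X \in L^p(\Omega)$ and invoke dominated convergence. I expect the only mildly delicate point to be the $\gamma$-norm convergence in item (4), which is the one step where the Banach setting (rather than Hilbert) genuinely matters; the remaining items are essentially bookkeeping.
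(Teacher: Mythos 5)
Your proposal is correct and follows essentially the same route as the paper: contractivity of $mR(m,A)$ via Hille--Yosida, the identity $AR(m,A)=mR(m,A)-I$ for the $D(A)$-growth bound with constant $(2m+1)L_F$, the ideal property for the $\gamma$-norms, and strong resolvent convergence for item (4). If anything you are slightly more explicit than the paper on item (4), where the paper only records $mR(m,A)\to I$ strongly and leaves implicit both the $\gamma(H,X)$-convergence theorem for $\mG$ and the dominated convergence argument for $\muo\to u_0$ in $L^p(\Omega;X)$; these are exactly the right supplements (though you may wish to double-check the precise reference number for the $\gamma$-convergence result in \cite{AnalysisBanachSpacesII}).
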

\begin{proof}
    \begin{enumerate}
        \item Continuity of $F(\omega,t,\cdot)\from  X \to X$ follows from Assumption \ref{ass:WP}\ref{assItem:WP_Lipschitz} for all $\omega \in \Omega, t \in [0,T]$, and thus also continuity as a mapping $F(\omega,t,\cdot)\from  D(A) \to X$. From the identity $AR(m,A)=mR(m,A)-I$ and continuity of the resolvent on $X$, we obtain continuity of $R(m,A)\from X \to D(A)$. Consequently, $\mF(\omega,t,\cdot)\from  D(A) \to D(A)$ is continuous. Hence, strong $\calP \otimes \calB(D(A))$-measurability of $\mF\from \Omega\times[0,T]\times D(A) \to D(A)$ follows from strong $\calP \otimes \calB(X)$-measurability as stated in Assumption \ref{ass:WP}. Likewise, strong $\calP \otimes \calB(D(A))$-measurability of $\mG$ can be derived. Lastly, since $R(m,A)$ maps to $D(A)$ and $u_0 \in L_{\calF_0}^p(\Omega;X)$, it holds that $\muo \in L_{\calF_0}^p(\Omega;D(A))$.
        \item First, we recall a folklore result from semigroup theory \cite[Thm.~1.10(iii)]{EngelNagel}: For contraction semigroups, the norm of the resolvent $R(\lambda,A)$ is bounded by $\|R(\lambda,A)\|_{\calL(X)} \le (\TextRe \lambda)^{-1}$ for all $\TextRe \lambda >0$. Hence, $\|mR(m,A)\|_{\calL(X)} \le m \cdot \frac{1}{m} = 1$ is contractive. Together with this observation, Lipschitz continuity of $F$ and $G$ implies uniform Lipschitz continuity of $\mF$ and $\mG$ with the same Lipschitz constant, respectively.
        \item By assumption, $F$ is of linear growth on $X$. Linear growth of $\mF$ on $D(A)$ with $\LFm=(2m+1)\LF$ follows from the identity $AR(m,A)=mR(m,A)-I$, observing that
        \begin{align*}
            \|\mF(\omega,t,x)\|_{D(A)} &= \|mAR(m,A)F(\omega,t,x)\| +\|mR(m,A)F(\omega,t,x)\|\\
            &
            \le \|m[mR(m,A)-I]F(\omega,t,x)\|+\|F(\omega,t,x)\|\\
            &\le (2m+1)\|F(\omega,t,x)\| \le (2m+1)\LF(1+\|x\|).
        \end{align*}
        Linear growth of $\mG$ with $\LGm=(2m+1)\LG$ follows analogously.
        \item It suffices to prove that $mR(m,A) \to I$ in the strong operator topology as $m \to \infty$. Since $A$ is densely defined and closed as the generator of a $C_0$-semigroup, this follows from  \cite[Lemma~3.4]{EngelNagel}. \qedhere
    \end{enumerate}
\end{proof}

For a comprehensive discretisation error analysis, we start by investigating the continuous regularisation error. The following lemma will prove helpful in doing so for $Z \in \{X,\gHX\}$ and with $\psi$ chosen based on the nonlinearity $F$ or noise $G$.

\begin{lemma}
\label{lem:uniformConvSOT}
    Let $Z$ be a Banach space, $\psi\from \Omega \times [0,T] \to Z$ have continuous paths almost surely and assume that 
    \[\sup_{t \in [0,T]} \|\psi(\cdot,t)\|_Z \in L^p(\Omega).\]
    Let $R_n,R \in \calL(Z)$, $n \in \N$, be such that $R_n \to R$ strongly as $n \to \infty$. Then
    \[
    \lim_{n \to \infty} \Big\| \sup_{t \in [0,T]} 
    \| (R_n-R)\psi(\cdot,t)\|_Z\Big\|_p =0.
    \]
\end{lemma}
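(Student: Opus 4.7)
My plan is to combine three standard ingredients: compactness of the path in $Z$, the fact that strong operator convergence is uniform on compact sets, and dominated convergence in $L^p(\Omega)$.

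First I would fix $\omega$ in the full-measure event on which $t \mapsto \psi(\omega,t)$ is continuous. The image $K_\omega \ce \{\psi(\omega,t) : t \in [0,T]\}$ is then compact in $Z$, being the continuous image of a compact set. By the Banach--Steinhaus theorem, strong convergence $R_n \to R$ implies uniform boundedness $M \ce \sup_{n \in \N}\|R_n\|_{\calL(Z)} < \infty$. An $\varepsilon$/net argument then shows that strong convergence is uniform on compact sets: given $\varepsilon>0$, cover $K_\omega$ by finitely many balls of radius $\varepsilon/(3(M+\|R\|))$ with centres $x_1,\ldots,x_N$, and choose $n$ large enough that $\|(R_n-R)x_i\|_Z < \varepsilon/3$ for every $i$; then for any $t \in [0,T]$, picking $x_i$ within $\varepsilon/(3(M+\|R\|))$ of $\psi(\omega,t)$ and splitting
\[
\|(R_n-R)\psi(\omega,t)\|_Z \le \|(R_n-R)(\psi(\omega,t)-x_i)\|_Z + \|(R_n-R)x_i\|_Z
\]
yields $\sup_{t\in [0,T]}\|(R_n-R)\psi(\omega,t)\|_Z < \varepsilon$. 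Hence $\sup_{t\in [0,T]}\|(R_n-R)\psi(\omega,t)\|_Z \to 0$ almost surely.

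Second, I would upgrade the a.s.\ convergence to $L^p$ convergence by the dominated convergence theorem. The uniform bound $M$ together with linearity yields the pointwise estimate
\[
\sup_{t\in [0,T]}\|(R_n-R)\psi(\omega,t)\|_Z \le (M+\|R\|_{\calL(Z)})\sup_{t\in [0,T]}\|\psi(\omega,t)\|_Z,
\]
and by hypothesis the right-hand side is in $L^p(\Omega)$. Applying dominated convergence in $L^p(\Omega)$ to the a.s.\ convergent, uniformly dominated sequence gives the claim.

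There is essentially no hard step here; the only point requiring care is the transition from pointwise strong convergence of $R_n$ to uniform convergence on the (random) compact trajectory, which is the finite $\varepsilon$-net argument combined with Banach--Steinhaus. Everything else is bookkeeping, and the moment hypothesis on $\sup_t \|\psi(\cdot,t)\|_Z$ is precisely what is needed to make dominated convergence applicable.
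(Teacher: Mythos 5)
Your proof is correct and follows the same route as the paper: compactness of the trajectory $\psi(\omega,[0,T])$, uniform convergence of $R_n\to R$ on compact sets (which the paper simply cites from Engel--Nagel while you spell out the Banach--Steinhaus plus $\varepsilon$-net argument), and dominated convergence in $L^p(\Omega)$ using the assumed integrability of $\sup_{t\in[0,T]}\|\psi(\cdot,t)\|_Z$.
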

\begin{proof}
    By continuity of paths of $\psi$, the set $\psi(\omega,[0,T]) \seq Z$ is compact for a.e. $\omega \in \Omega$. Since by assumption $R_n$ converges to $R$ in the strong operator topology, \cite[Proposition A.3]{EngelNagel} yields uniform convergence of $R_n$ to $R$ on compact sets in $Z$ as $n \to \infty$ for a.e. $\omega \in \Omega$. Hence,
    \[
        \sup_{t \in [0,T]} \|(R_n-R)\psi(\omega,t)\|_Z \xrightarrow[n\to\infty]{} 0 \quad \text{ for a.e. }\omega \in \Omega.
    \]
    Due to the assumed integrability of the supremum of $\psi$ in time, the desired statement follows from dominated convergence in $L^p(\Omega)$.
\end{proof}

\begin{lemma}[Convergence of continuous regularisation]
\label{lemma:N1regularisationContinuous}
    Suppose that Assumption \ref{ass:FG_conv} holds for some $p \in [2,\infty)$. Suppose that $A$ generates a $C_0$-contraction semigroup $(S(t))_{t \ge 0}$ on $X$ and let $u_0 \in L_{\calF_0}^p(\Omega;X)$. Denote by $U$ the mild solution of \eqref{eq:StEvolEqnFG} and by $\mU$ the mild solution of \eqref{eq:StEvolEqnDefnU} with $\mF,\mG,\muo$ as defined in \eqref{eq:defFnGnuon} for $m \in \N$. Then 
    \begin{equation}
    \label{eq:boundN1}
            \lim_{m \to \infty} \bigg\lVert\sup_{t \in [0,T]}\|U(t)-\mU(t)\|\bigg\rVert_p =0.
    \end{equation}
\end{lemma}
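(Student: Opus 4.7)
The plan is to subtract the mild-solution formulas for $U$ and $\mU$ and decompose the difference into five pieces:
\begin{align*}
U(t)-\mU(t) &= S(t)(u_0-\muo) + \int_0^t S(t-r)\bigl[F(r,U(r))-\mF(r,U(r))\bigr]\,\rmd r \\
&\quad + \int_0^t S(t-r)\bigl[\mF(r,U(r))-\mF(r,\mU(r))\bigr]\,\rmd r \\
&\quad + \int_0^t S(t-r)\bigl[G(r,U(r))-\mG(r,U(r))\bigr]\,\rmd W_H(r) \\
&\quad + \int_0^t S(t-r)\bigl[\mG(r,U(r))-\mG(r,\mU(r))\bigr]\,\rmd W_H(r).
\end{align*}
Writing $\phi_m(t) \ce \bigl\|\sup_{s\le t}\|U(s)-\mU(s)\|\bigr\|_p$, the aim is to bound each piece so as to obtain an inequality of Gronwall type with an additive term that vanishes as $m\to\infty$, and then to apply Lemma \ref{lem:gronwallvar}.

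The first piece is controlled by contractivity of $S$ together with Proposition \ref{prop:prop}(iv), yielding $\|u_0-\muo\|_{L^p(\Omega;X)}\to 0$. For the two \emph{Lipschitz pieces} (the third and fifth), I exploit that by Proposition \ref{prop:prop}(ii) the regularised maps $\mF,\mG$ inherit the Lipschitz constants $\CF,\CG$ of $F,G$. The third piece, after Cauchy--Schwarz and taking $L^p$-norms, is bounded by $\CF\sqrt T\bigl(\int_0^t \phi_m(r)^2\,\rmd r\bigr)^{1/2}$. For the fifth, Theorem \ref{thm:maxIneqQuasiContractive} combined with Minkowski's inequality in $L^{p/2}(\Omega)$ (valid since $p\ge 2$) produces an analogous bound $C_{p,D}\CG\bigl(\int_0^t \phi_m(r)^2\,\rmd r\bigr)^{1/2}$.

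The crux lies in the two \emph{regularisation pieces}. Here I invoke Lemma \ref{lem:uniformConvSOT} with $R_m\ce mR(m,A)$, which by Proposition \ref{prop:prop}(iv) converges strongly to $I$ on $X$, and by the ideal property (Proposition \ref{prop:leftIdeal}) also strongly on $\gHX$. Taking $\psi^F(r)\ce F(r,U(r))$, path continuity follows from H\"older continuity of $F$ in time (Assumption \ref{ass:FG_conv}(c)), Lipschitz continuity in the spatial variable, and path continuity of $U$, while the integrability $\sup_r\|\psi^F(r)\|\in L^p(\Omega)$ follows from linear growth of $F$ and the fact that $U\in L^p(\Omega;C([0,T];X))$ by Theorem \ref{thm:wellposed}. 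The lemma then delivers $\bigl\|\sup_r\|(I-mR(m,A))F(r,U(r))\|\bigr\|_p\to 0$, and the second piece is majorised by $T$ times this quantity. The fourth piece is treated analogously: Theorem \ref{thm:maxIneqQuasiContractive} reduces it to $C_{p,D}\|(I-mR(m,A))G(\cdot,U(\cdot))\|_{L^p(\Omega;L^2(0,T;\gHX))}$, and Lemma \ref{lem:uniformConvSOT} applied to $\psi^G(r)\ce G(r,U(r))$ with $Z=\gHX$, combined with the trivial estimate $\|\cdot\|_{L^2(0,T)}\le \sqrt T\,\|\cdot\|_{L^\infty(0,T)}$ and dominated convergence, shows this too vanishes as $m\to\infty$.

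Assembling the bounds yields $\phi_m(t)\le \alpha_m + \beta\bigl(\int_0^t \phi_m(r)^2\,\rmd r\bigr)^{1/2}$ on $[0,T]$, where $\alpha_m\to 0$ and $\beta\ce \CF\sqrt T + C_{p,D}\CG$ is independent of $m$. Continuity of $\phi_m$ follows from path continuity of $U,\mU$ and dominated convergence using $\sup_{s\in[0,T]}\|U(s)-\mU(s)\|\in L^p(\Omega)$, so Lemma \ref{lem:gronwallvar} applies and gives $\phi_m(T)\le \alpha_m(1+\beta^2 T)^{1/2}\exp\bigl(\tfrac12+\tfrac12\beta^2 T\bigr)\to 0$, which is \eqref{eq:boundN1}. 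The main technical difficulty I anticipate is the verification of the hypotheses of Lemma \ref{lem:uniformConvSOT} for the stochastic term, that is, the path continuity of $G(\cdot,U(\cdot))$ as a $\gHX$-valued process together with the $L^p$-integrability of its supremum; both properties ultimately follow from the H\"older/Lipschitz assumptions on $G$ combined with the well-posedness statement in Theorem \ref{thm:wellposed}.
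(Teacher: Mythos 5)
Your proposal is correct and follows essentially the same route as the paper: the same decomposition of $U-\mU$ (the paper groups it into three terms and then splits the drift and noise integrals, which is your five-piece split), the same use of Lemma \ref{lem:uniformConvSOT} with $\psi(t)=F(t,U(t))$ resp.\ $G(t,U(t))$ and $mR(m,A)\to I$ (extended to $\gHX$ via the ideal property), the same treatment of the Lipschitz pieces via Cauchy--Schwarz and Theorem \ref{thm:maxIneqQuasiContractive}, and the same conclusion via the continuous Gronwall lemma \ref{lem:gronwallvar}. No gaps.
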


\begin{proof}
    Let $\li{_m}{V}\ce U- \mU$ and $ \tau \in [0,T]$. Then $\li{_m}{V}$ is given by
    \begin{align*}
        \li{_m}{V}(t) &= S(t)[u_0-\muo]+\int_0^t S(t-s)[F(s,U(s))-\mF(s,\mU(s))]\,\rmd s\\
        &\phantom{= }+ \int_0^t S(t-s)[G(s,U(s))-\mG(s,\mU(s))]\,\rmd W_H(s),
    \end{align*}
    which implies
    \begin{align*}
        \li{_m}E_1(\tau) &\ce \bigg\lVert\sup_{t \in [0,\tau]}\|\li{_m}{V}(t)\|\bigg\rVert_p\\ &\le \bigg\lVert\sup_{t \in [0,\tau]} \|S(t)[u_0-\muo]\|\bigg\rVert_p+\bigg\lVert\sup_{t \in [0,\tau]}\int_0^t\|S(t-s)[F(s,U(s))-\mF(s,\mU(s))]\|\ds\bigg\rVert_p\\
        &\phantom{\ce }+ \bigg\lVert\sup_{t \in [0,\tau]}\bigg\|\int_0^t S(t-s)[G(s,U(s))-\mG(s,\mU(s))]\dWHs\bigg\|\bigg\rVert_p\\
        &\ec E_{1,1}(\tau)+E_{1,2}(\tau)+E_{1,3}(\tau).
    \end{align*}
    We proceed to bound the terms individually.  
    For the initial value term, contractivity of $S$, strong convergence of $mR(m,A)$ to $I$ on $X$ and dominated convergence in $L^p(\Omega)$ yield the existence of $m_0 \in \N$ such that for all $m \ge m_0$, 
    \begin{align}
    \label{eq:N11}
        E_{1,1}(\tau) &\le \|u_0-\muo\|_{L^p(\Omega;X)} = \|[I-mR(m,A)]u_0\|_{L^p(\Omega;X)} <\frac{\varepsilon}{3}.
    \end{align}
    Next, we estimate
    \begin{align*}
        E_{1,2}(\tau) &\le \bigg\|\int_0^\tau \|F(s,U(s))-\mF(s,U(s))\|\ds \bigg\|_p+\CF\bigg\|\int_0^\tau \|U(s)-\mU(s)\|\ds \bigg\|_p\\
        &\le \tau \bigg\|\sup_{s \in [0,\tau]} \|F(s,U(s))-\mF(s,U(s))\|\bigg\|_p + \CF \int_0^\tau \bigg\|\sup_{r \in [0,s]}\|\li{_m}V(r)\|\bigg\|_p\ds 
    \end{align*}
    using contractivity of $S$ and uniform Lipschitz continuity of $\mF$. By Theorem \ref{thm:wellposed}, $U$ almost surely has continuous paths. Combined with continuity of $F$ in time and space as follows from Assumption \ref{ass:FG_conv}, this implies that $\psi\from \Omega \times [0,T] \to X$, $\psi(\omega,t) \ce F(\omega,t,U(t))$ also has continuous paths almost surely. Furthermore, linear growth and Theorem \ref{thm:wellposed} imply
    \begin{align*}
        \bigg\|&\sup_{s \in [0,\tau]}\|F(s,U(s))\|\bigg\|_p \le \LF (1+\|U\|_{L^p(\Omega;C([0,T];X))})<\infty.
    \end{align*}
    Hence, Lemma \ref{lem:uniformConvSOT} applied to $\psi$ on $Z =X$ with $R_n=nR(n,A)$ and $R=I$ yields the existence of $m_1 \in \N$ such that for all $m \ge m_1$,
    \begin{equation*}
        \bigg\|\sup_{s \in [0,\tau]} \|F(s,U(s))-\mF(s,U(s))\|\bigg\|_p < \frac{\varepsilon}{3T}.
    \end{equation*}
    In conclusion, for $m \ge m_1$ the Cauchy-Schwarz inequality yields
    \begin{equation}
    \label{eq:N12}
        E_{1,2}(\tau) < \frac{\varepsilon}{3}+\CF \int_0^\tau \li{_m}E_1(s) \ds \le \frac{\varepsilon}{3}+\CF  T^{1/2} \Big(\int_0^\tau \li{_m}E_1(s)^2\ds\Big)^{1/2}.
    \end{equation}
    
    Via the maximal inequality from Theorem \ref{thm:maxIneqQuasiContractive}, the triangle inequality in $L^p(\Omega;L^2(0,\tau;\gHX))$, uniform Lipschitz continuity of $\mG$, and Fubini's theorem we obtain
    \begin{align*}
        E_{1,3}(\tau) &\le C_{p,D} \Big\lVert\Big(\int_0^\tau \|G(s,U(s))-\mG(s,\mU(s))\|_\gHX^2\ds\Big)^{1/2}\Big\rVert_p\\
        &\le C_{p,D} \Big\lVert\Big(\int_0^\tau \|G(s,U(s))-\mG(s,U(s))\|_\gHX^2\ds\Big)^{1/2}\Big\rVert_p\\
        &\phantom{\le }+ C_{p,D}\CG \Big\lVert\int_0^\tau \|U(s)-\mU(s)\|^2\ds\Big\rVert_{p/2}^{1/2}\\
        &\le C_{p,D}\tau^{1/2} \bigg\lVert \sup_{s\in [0,T]}\big\|G(s,U(s))-\mG(s,U(s))\big\|_\gHX\bigg\rVert_p\\
        &\phantom{\le }+C_{p,D}\CG \bigg( \int_0^\tau\bigg\|\sup_{r \in [0,s]} \|U(r)-\mU(r)\|\bigg\|_p^2\ds\bigg)^{1/2}.
    \end{align*}
    By the left ideal property of $\gHX$, see Proposition \ref{prop:leftIdeal}, the resolvent $R(m,A)$ extends to a linear and bounded operator on $\gHX$ for $m \in \rho(A)$. Hence, arguing as for the nonlinear terms, Lemma \ref{lem:uniformConvSOT} with $Z=\gHX$ and $\psi(\omega,t)=G(\omega,t,U(t))$ yields the existence of $m_2 \in \N$ such that for all $m \ge m_2$,
    \begin{equation*}
        \bigg\lVert \sup_{s\in [0,T]}\|G(s,U(s))-\mG(s,U(s))\|_\gHX\bigg\rVert_p < \frac{\varepsilon}{3C_{p,D}T^{1/2}}.
    \end{equation*}
    Hence, for $m \ge m_2$
    \begin{equation}
    \label{eq:N13}
        E_{1,3}(\tau) \le \frac{\varepsilon}{3} + C_{p,D}\CG\Big( \int_0^\tau \li{_m}E_1(s)^2\ds\Big)^{1/2}.
    \end{equation}
    Altogether, we deduce from \eqref{eq:N11}, \eqref{eq:N12}, and \eqref{eq:N13} that
    \begin{align*}
        \li{_m}E_1(\tau) &\le \varepsilon 
        +\beta_{p,D,T} \Big(\int_0^\tau \li{_m}E_1(s)^2\ds\Big)^{1/2}
    \end{align*}
    with $\beta_{p,D,T}\ce \CF T^{1/2}+C_{p,D}\CG$.
    An application of the continuous version of Gronwall's inequality from Lemma \ref{lem:gronwallvar} yields
    \begin{align*}
        \li{_m}E_1(\tau) \le \varepsilon \cdot(1+\beta_{p,D,T}^2 \tau)^{1/2}\exp\Big(\frac12+\frac12 \beta_{p,D,T}^2 \tau\Big).
    \end{align*}
    The required statement follows by setting $\tau=T$.
\end{proof}

The regularised discrete solution obtained by applying the contractive time discretisation scheme as in \eqref{eq:defUj} is given by the variation-of-constants formula
\begin{equation}
\label{eq:nUjVoC}
    \mU^j \ce R_k^j \muo + k \sum_{i=0}^{j-1} R_k^{j-i}\mF(t_i,\mU^i) + \sum_{i=0}^{j-1} R_k^{j-i}\mG(t_i,\mU^i) \Delta W_{i+1}
\end{equation}
for $j=0,\ldots,N_k$.

The next error investigated is the numerical discretisation error for the regularised problem, where we make use of the fact that $\mF$ and $\mG$ are mapping into spaces with additional regularity. By means of the regularisation, we are now in the position to apply the results from Section \ref{sec:2smoothRates}.

\begin{corollary}[Convergence of regularised discretisation]
\label{cor:regDiscrError}  
    Suppose that Assumption \ref{ass:FG_conv} holds for some $p\in [2,\infty)$ and $\alpha\in (0,1]$. Further, suppose that $A$ generates a $C_0$-contraction semigroup on both $X$ and $D(A)$, and let $u_0 \in L_{\calF_0}^p(\Omega;X)$. Let $(R_k)_{k>0}$ be a time discretisation scheme that is contractive on both $X$ and $D(A)$ and that approximates $S$ to order $\alpha$ on $D(A)$. Let $m \in \N$. Denote by $\mU$ the mild solution of \eqref{eq:StEvolEqnDefnU} with $\mF,\mG,\muo$ as defined in \eqref{eq:defFnGnuon} and by $(\mU^j)_{j=0,\ldots,N_k}$ the temporal approximations as defined in \eqref{eq:nUjVoC}. Then 
    \begin{equation}
    \label{eq:boundN2}
            \lim_{k \to 0}\bigg\lVert\max_{0 \le j \le N_k}\|\mU(t_j)-\mU^j\|\bigg\rVert_p =0.
    \end{equation}
\end{corollary}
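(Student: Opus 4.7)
The plan is to reduce Corollary~\ref{cor:regDiscrError} to a direct application of Theorem~\ref{thm:convergenceRate} with the choice $Y = D(A)$, since Proposition~\ref{prop:prop} has been engineered so that the regularised data $(\mF, \mG, \muo)$ land in this more regular space and enjoy all the structural properties needed. Observe first that $D(A)$ endowed with its graph norm is $(2,D)$-smooth by the elementary fact recorded at the end of Subsection~\ref{subsec:2smoothBanach}, so the setup of Theorem~\ref{thm:convergenceRate} makes sense. Since $m \in \N$ is fixed here, any $m$-dependence in the constants is harmless (the introduction already flags that this step is not uniform in $m$), which is what allows us to pay for the regularity the structured theorem demands.

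The bulk of the work is to check each condition of Assumption~\ref{ass:FG_structured} for the regularised data. Split $\mF = \tilde{\mF} + \mf$ with $\tilde{\mF}(\omega,t,x) \ce \mF(\omega,t,x) - \mF(\omega,t,0)$ and $\mf(\omega,t) \ce \mF(\omega,t,0) = mR(m,A)F(\omega,t,0)$, and analogously decompose $\mG$. Then condition (a) (global Lipschitz on $X$) is exactly Proposition~\ref{prop:prop}(2), condition (d) (linear growth on $D(A)$) is Proposition~\ref{prop:prop}(3), and the measurability and $D(A)$-invariance in condition (c) are Proposition~\ref{prop:prop}(1); the remaining integrability demand $\mf \in L^p_\calP(\Omega;C([0,T];D(A)))$ and $\mg \in L^p_\calP(\Omega;C([0,T];\gamma(H,D(A))))$ follows from the pathwise Hölder continuity in Assumption~\ref{ass:FG_conv}(c) together with the uniform bound $\|F(\cdot,\cdot,0)\| \le \LF$, since $mR(m,A) \in \calL(X,D(A))$ (and, by Proposition~\ref{prop:leftIdeal}, $mR(m,A) \in \calL(\gHX, \gamma(H,D(A)))$). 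Condition (b) is immediate from the contractivity of $mR(m,A)$ on $X$ and the ideal property on $\gHX$: for every $m$, $\omega$, and $x$,
\begin{equation*}
[\mF(\omega,\cdot,x)]_\alpha \le C_{\alpha,F}, \qquad [\mG(\omega,\cdot,x)]_\alpha \le C_{\alpha,G}.
\end{equation*}
The ambient hypotheses on $A$ and $(R_k)$ transfer verbatim, $\muo \in L^p_{\calF_0}(\Omega;D(A))$ by Proposition~\ref{prop:prop}(1), and the embedding requirement is trivial since $Y = D(A)$ embeds into itself for $\alpha = 1$ and into $D_A(\alpha,\infty)$ via $D(A) \hra D(A^\alpha)$ and \eqref{eq:DAalphaembeds} for $\alpha \in (0,1)$.

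With all assumptions in place, Theorem~\ref{thm:convergenceRate} applied to \eqref{eq:StEvolEqnDefnU} yields constants $C_1, C_2, C_3, C_4 \ge 0$ (depending on $m$) such that for all $k \in (0, T/8]$
\begin{equation*}
\bigg\lVert\max_{0 \le j \le N_k}\|\mU(t_j) - \mU^j\|\bigg\rVert_p \le C_1 k + C_2 k^{1/2} + \Big(C_3 + C_4 \log(T/k)\Big) k^\alpha,
\end{equation*}
and since $k^\alpha \log(T/k) \to 0$ as $k \to 0$ for every $\alpha > 0$, the right-hand side vanishes, giving \eqref{eq:boundN2}. The only delicate point to notice is precisely the $m$-dependence of the constants (in particular through $\LFm, \LGm \sim m$ from Proposition~\ref{prop:prop}(3)); this is the reason the argument cannot be closed by fixing $k$ first and sending $m \to \infty$, and it is exactly what makes the subsequent discrete regularisation step in Theorem~\ref{thm:convergence} the genuine difficulty of the paper rather than a cosmetic addition to the present corollary.
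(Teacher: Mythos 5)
Your proposal is correct and follows essentially the same route as the paper: verify that $(\mF,\mG,\muo)$ satisfy Assumption \ref{ass:FG_structured} with $Y=D(A)$ via Proposition \ref{prop:prop}, then invoke Theorem \ref{thm:convergenceRate} and let the logarithmically corrected rate tend to zero. You are in fact slightly more thorough than the paper's own proof, which leaves the embedding $D(A)\hra D_A(\alpha,\infty)$ and the integrability of the inhomogeneous parts $\mf,\mg$ implicit.
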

\begin{proof}
    First, we note that, as $X$, $D(A)$ is a $2$-smooth Banach space, see Subsection \ref{subsec:2smoothBanach}. Global Lipschitz continuity of $\mF,\mG$ on $X$, $D(A)$-invariance, and linear growth on $D(A)$ as stated in Assumption \ref{ass:FG_structured}\ref{item:FGstruct_Lipschitz}, \ref{item:FGstruct_Yinvariance}, and \ref{item:FGstruct_linearGrowth} were already proven in Proposition \ref{prop:prop}.
    Hölder continuity of $\mF$ and $\mG$ as in Assumption \ref{ass:FG_structured}\ref{item:FGstruct_Hölder} follows immediately from the respective Assumption \ref{ass:FG_conv}\ref{item:FGconv_Hölder} on $F$ and $G$. Lastly, $\muo=mR(m,A)u_0 \in L_{\calF_0}^p(\Omega;D(A))$ due to the regularising property of the resolvent. 
    
    Hence, Theorem \ref{thm:convergenceRate} is applicable to $\mU$ and its discretisation $(\mU^j)_{j=0,\ldots,N_k}$ with $Y=D(A)$, nonlinearity $\mF$, and noise $\mG$. It yields the desired convergence, even with a rate depending on the Hölder continuity in time of $F$ and $G$.
\end{proof}

Note that the convergence of the regularised discretisation is not uniform in the regularisation parameter $m \in \N$. This leads to additional challenges in the proof of the main result, which we now pass to.

\begin{proof}[Proof of Theorem \ref{thm:convergence}]
    Let $\lfloor t \rfloor \ce t_j$ for $t \in [t_j,t_{j+1})$, $0 \le j \le N_k-1$, and $\lfloor T \rfloor \ce T$. Then
    \begin{equation*}
        \bigg\|\sup_{t \in [0,T]} \|U(t)-\tilde{U}(t)\| \bigg\|_p 
        \le \bigg\|\sup_{t \in [0,T]} \|U(t)-U(\lfloor t\rfloor)\| \bigg\|_p +\bigg\|\max_{0 \le j \le N_k} \|U(t_j)-U^j\| \bigg\|_p.
    \end{equation*}
    Theorem \ref{thm:wellposed} implies pathwise continuity of the mild solution $U$. Clearly, $U$ is also uniformly continuous on $[0,T]$, which together with dominated convergence in $L^p(\Omega)$ yields convergence of the first term to $0$ as $k \to 0$. It remains to show convergence of the discretisation error. To this end, let $N \in \{0,\ldots,N_k\}$ and fix some $m \in \N$ to be determined later. We further decompose the discretisation error at the first $N+1$ grid points into three parts
    \begin{align}
    \label{eq:fullerrorBeginning}
        E(N) &\ce \bigg\| \max_{0 \le j \le N} \|U(t_j)-U^j\|\bigg\|_p \nonumber\\
        &\le \bigg\lVert\max_{0 \le j \le N}\|U(t_j)-\mU(t_j)\|\bigg\rVert_p+ \bigg\lVert\max_{0 \le j \le N}\|\mU(t_j)-\mU^j\|\bigg\rVert_p + \bigg\|\max_{0\le j \le N}\|\mU^j-U^j\|\bigg\|_p\nonumber\\
        &\ec \li{_m}E_1(N) + \li{_m}E_2(N) + \li{_m}E_3(N).
    \end{align}
    Note that $\li{_m}E_1(N)\to 0$ uniformly in $N$ as $m \to \infty$ as a consequence of Lemma \ref{lemma:N1regularisationContinuous}. Moreover, $\li{_m}E_2(N_k)\to 0$ as $k \to 0$ follows from Corollary \ref{cor:regDiscrError}. It remains to bound the remaining term $\li{_m}E_3(N)$. This will be done in terms of $\li{_m}E_1(N_k)$ and $\li{_m}E_2(N_k)$, which converge in the desired manner, and $E(i)$, $0 \le i \le N-1$, which is dealt with using a Gronwall argument as illustrated in Step 1. The bound itself will be obtained in Step 2 of the proof. 

    \textit{Claim.} Let $\ve >0$. We claim that there exist $m_0=m_0(\ve) \in \N$ and $C=C(p,D,F,G,T)\ge 0$ such that for $m \ge m_0$,
    \begin{align}
    \label{eq:claimStep1}
        \li{_m}E_3(N) \le \frac{\varepsilon}{2} + C[ \li{_m}E_1(N_k)+ \li{_m}E_2(N_k)]+ C\bigg(k \sum_{i=0}^{N-1} E(i)^2\bigg)^{1/2}.
    \end{align}
    \textit{Step 1.} 
    We show that the claim suffices to prove the convergence of $E(N_k)$ as $k \to 0$. Indeed, noting that $\li{_m}E_i(N) \le \li{_m}E_i(N_k)$ for $i=1,2$, we conclude from \eqref{eq:fullerrorBeginning} that
    \begin{equation*}
        E(N) \le \frac{\varepsilon}{2} + (C+1)[\li{_m}E_1(N_k)+\li{_m}E_2(N_k)]+ C\bigg(k \sum_{i=0}^{N-1} E(i)^2\bigg)^{1/2}.
    \end{equation*}
    An application of Gronwall's inequality from Lemma \ref{lem:KruseGronwall} results in
    \begin{equation*}
        E(N) \le \Big(\frac{\varepsilon}{2} + (C+1)[\li{_m}E_1(N_k)+\li{_m}E_2(N_k)]\Big)(1+C^2t_N)^{1/2}\exp\bigg(\frac{1+C^2t_N}{2}\bigg)
    \end{equation*}
    for all $m \ge m_0$. By Lemma \ref{lemma:N1regularisationContinuous}, there exists $m_1 \in \N$ such that 
    \begin{equation*}
        \li{_m}E_1(N_k) \le \bigg\| \sup_{t \in [0,T]} \|U(t)-\mU(t)\| \bigg\|_p \le \frac{\varepsilon}{2(C+1)}
    \end{equation*}
    for all $m \ge m_1$ and $N_k \in \N$. Fix some  $m \ge \max\{m_0,m_1\}$. Then
    \begin{equation*}
        E(N) \le \big(\varepsilon + (C+1)\li{_m}E_2(N_k)\big)(1+C^2T)^{1/2}\exp\bigg(\frac{1+C^2T}{2}\bigg).
    \end{equation*}
    Corollary \ref{cor:regDiscrError} gives $\li{_m}E_2(N_k) \to 0$ as $N_k \to \infty$ or, equivalently, $k \to 0$. Since $\ve>0$ was chosen arbitrarily, we conclude $E(N_k) \to 0$ as $k \to 0$, which proves the desired convergence statement.
    
    \textit{Step 2.} We proceed to prove the claim \eqref{eq:claimStep1} from Step $1$. The error can be divided into an initial value part, a nonlinear part, and a noise part according to
    \begin{align}
    \label{eq:N3split}
       \li{_m}E_3(N) &= \bigg\|\max_{0\le j \le N}\|\mU^j-U^j\|\bigg\|_p \nonumber\\
        &\le \bigg\|\max_{0\le j \le N}\|R_k^j(\muo-u_0)\|\bigg\|_p+\bigg\| \max_{0\le j \le N} \bigg\| k \sum_{i=0}^{j-1} R_k^{j-i}[F(t_i,U^i)-\mF(t_i,\mU^i)]\bigg\|\bigg\|_p\nonumber\\
        &\phantom{= }+ \bigg\| \max_{0\le j \le N} \bigg\|\sum_{i=0}^{j-1} R_k^{j-i}[G(t_i,U^i)-\mG(t_i,\mU^i)]\Delta W_{i+1}\bigg\|\bigg\|_p\nonumber\\
        &\ec E_{3,1}+E_{3,2}+E_{3,3},
    \end{align}
    where the dependence $E_{3,\ell}=E_{3,\ell}(N,m,k)$ for $\ell=1,2,3$ is omitted in the notation. We bound all three terms individually. First, we observe that by contractivity of $R_k$ and pointwise convergence of $mR(m,A) \to I$, there exists $m_2 \in \N$ such that for all $m \ge m_2$
    \begin{align}
    \label{eq:N31}
        E_{3,1} \le \|\muo-u_0\|_{L^p(\Omega;X)} = \|(mR(m,A)-I)u_0\|_{L^p(\Omega;X)}<\frac{\varepsilon}{6}.
    \end{align}
    Second, we consider the nonlinear part of the error. For $0 \le j \le N$ and $0 \le i \le j-1$, we estimate 
    \begin{align}
    \label{eq:splitNLtermsFFn}
        \|&R_k^{j-i}[F(t_i,U^i)-\mF(t_i,\mU^i)]\|
         \nonumber\\
         & \le \|F(t_i,U^i)-F(t_i,U(t_i))\|+\|F(t_i,U(t_i))-\mF(t_i,U(t_i))\|\nonumber\\
         &\phantom{\le }+ \|\mF(t_i,U(t_i))-\mF(t_i,\mU(t_i))\| + \|\mF(t_i,\mU(t_i))-\mF(t_i,\mU^i)\|\nonumber\\
         &\le \CF \|U(t_i)-U^i\| +\|F(t_i,U(t_i))-\mF(t_i,U(t_i))\|\nonumber\\
         &\phantom{\le }+\CF \|U(t_i)-\mU(t_i)\|+\CF \|\mU(t_i)-\mU^i\|,
    \end{align} 
    where in the last step we have used uniform Lipschitz continuity of $\mF$ and $F$. The reason for splitting the error this way is that the difference between $F$ and its regularised counterpart $\mF$ is evaluated in the values $U(t_i)$ of the mild solution at the time grid points. Since the mild solution has continuous paths, this enables us to apply Lemma \ref{lem:uniformConvSOT} as seen in the proof of Lemma \ref{lemma:N1regularisationContinuous}. This yields uniform convergence, in particular uniformly in the number of time steps. 
    Summing over $i$, multiplying by $k$, taking the maximum over all $j$ and taking norms in $L^p(\Omega)$, we conclude from Minkowski's inequality in $L^p(\Omega)$ that
    \begin{align*}
        E_{3,2} &= \bigg\| \max_{0\le j \le N} \bigg\| k \sum_{i=0}^{j-1} R_k^{j-i}[F(t_i,U^i)-\mF(t_i,\mU^i)]\bigg\|\bigg\|_p\\
        &\le \CF\bigg\| k \sum_{i=0}^{N-1} \|U(t_i)-U^i\|\bigg\|_p + \CF\bigg\|  k \sum_{i=0}^{N-1} \|U(t_i)-\mU(t_i)\|\bigg\|_p\\
        &\phantom{\le }+ \CF\bigg\|  k \sum_{i=0}^{N-1} \|\mU(t_i)-\mU^i\|\bigg\|_p+\bigg\|  k \sum_{i=0}^{N-1} \|F(t_i,U(t_i))-\mF(t_i,U(t_i))\|\bigg\|_p\\
        &\le \CF k \sum_{i=0}^{N-1} E(i) + \CF T [ \li{_m}E_1(N_k)+   \li{_m}E_2(N_k)]+ T \bigg\| \sup_{t \in [0,T]} \|F(t,U(t))-\mF(t,U(t))\|\bigg\|_p. 
    \end{align*}
    As demonstrated in detail in the proof of Lemma \ref{lemma:N1regularisationContinuous}, Lemma \ref{lem:uniformConvSOT} yields the existence of $m_3 \in \N$ such that for all $m \ge m_3$,
    \begin{equation*}
        \bigg\| \sup_{t \in [0,T]} \|F(t,U(t))-\mF(t,U(t))\|\bigg\|_p < \frac{\varepsilon}{6T}.
    \end{equation*}
    Consequently, from Cauchy-Schwarz's inequality it follows that for all $m \ge m_3$,
    \begin{equation}
    \label{eq:N32}
        E_{3,2}\le \CF\sqrt{T}\Big(k\sum_{i=0}^{N-1} E(i)^2\Big)^{1/2} +  \CF T [\li{_m}E_1(N_k)+  \li{_m}E_2(N_k)] + \frac{\varepsilon}{6}.
    \end{equation}
    To bound the last term $E_{3,3}$ in \eqref{eq:N3split}, we apply Lemma \ref{lem:PinelisBound} with $Q_i\ce G(t_i,U^i)-\mG(t_i,\mU^i)$.
    This yields
    \begin{align*}
        E_{3,3}
        &\le B_{p,D} \Big(k \sum_{i=0}^{N-1} \big\| \|[G(t_i,U^i)-\mG(t_i,\mU^i)]\|_\gHX\big\|_p^2\Big)^{1/2}
    \end{align*}
    with $B_{p,D} \ce 10D\sqrt{p}(\frac{10p^2}{p-1}+1)$ recalling that $C_{p,D}=10D\sqrt{p}$.
    As for the nonlinear terms in \eqref{eq:splitNLtermsFFn}, we split the term $\|G(t_i,U^i)-\mG(t_i,\mU^i)\|_\gHX$ in such a way that the difference of $G$ and $\mG$ is evaluated at $U(t_i)$ rather than the discrete approximations $U^i$ or $\mU^i$. After an application of the triangle inequality in $\ell^2(\{0,\ldots,N-1\};L^p(\Omega;\gHX))$ this results in
    \begin{align*}
        E_{3,3}
        &\le B_{p,D}\bigg[\CG \Big(k \sum_{i=0}^{N-1}\| U(t_i)-U^i \|_{L^p(\Omega;X)}^2\Big)^{1/2}\\
        &\phantom{\le }+\CG\Big(k \sum_{i=0}^{N-1}\| U(t_i)-\mU(t_i)\|_{L^p(\Omega;X)}^2\Big)^{1/2} +\CG\Big(k \sum_{i=0}^{N-1}\| \mU(t_i)-\mU^i\|_{L^p(\Omega;X)}^2\Big)^{1/2}\\
        &\phantom{\le }+ \Big(k \sum_{i=0}^{N-1}\big\|\| G(t_i,U(t_i))-\mG(t_i,U(t_i))\|_\gHX \big\|_p^2\Big)^{1/2}\bigg]\\
        &\le B_{p,D}\bigg[\CG \Big(k \sum_{i=0}^{N-1}E(i)^2\Big)^{1/2}+\CG\sqrt{T}[\li{_m}E_1(N_k)+\li{_m}E_2(N_k)]\\
        &\phantom{\le }+ \sqrt{T}\bigg\|\sup_{t \in [0,T]}\| G(t,U(t))-\mG(t,U(t))\|_\gHX \bigg\|_p\bigg].
    \end{align*}
    We recall from the proof of Lemma \ref{lemma:N1regularisationContinuous} that the left ideal property of $\gHX$ allows us to apply Lemma \ref{lem:uniformConvSOT} on $Z=\gHX$. We infer that there is $m_4 \in \N$ such that for all $m \ge m_4$ the bound
    \begin{equation*}
        \bigg\| \sup_{t \in [0,T]} \|G(t,U(t))-\mG(t,U(t))\|\bigg\|_p < \frac{\varepsilon}{6B_{p,D}\sqrt{T}}
    \end{equation*}
    holds. Thus, for $m \ge m_4$,
    \begin{align}
    \label{eq:N33}
        E_{3,3}
        &\le B_{p,D}\CG \bigg(k \sum_{i=0}^{N-1}E(i)^2\bigg)^{1/2}+B_{p,D}\CG\sqrt{T}[\li{_m}E_1(N_k)+\li{_m}E_2(N_k)]+\frac{\varepsilon}{6}.
    \end{align}
    Inserting the bounds \eqref{eq:N31}, \eqref{eq:N32}, and \eqref{eq:N33} into \eqref{eq:N3split} proves the claim \eqref{eq:claimStep1} with $C \ce \max\{\sqrt{T},1\} \cdot$ $(\CF\sqrt{T}+B_{p,D}\CG)$ and $m_0 \ce \max\{m_2,m_3,m_4\}$. This finishes the proof.
\end{proof}

\section{Application to the stochastic Schrödinger equation}
\label{sec:example}

To illustrate the convergence results from Section \ref{sec:convergence}, we consider the stochastic Schrödinger equation with a potential and linear multiplicative noise
\begin{align}
\label{eq:linearSchroedingerPotentialMultNoise}
    \Bigg\{\begin{split} \rmd u &= -\iu(\Delta + V) u \;\rmd t-\iu u\; \rmd W~~~ \text{ on }[0,T],\\
    u(0)&=u_0
    \end{split}
\end{align}
and its nonlinear variant with $\phi\from \C\to \C$ and $\psi\from \C \to \C$,
\begin{align}
\label{eq:SchrödingerNL}
    \Bigg\{\begin{split} \rmd u &= -\iu(\Delta u + V u + \phi(u))\;\rmd t-\iu \psi(u)\; \rmd W~~~ \text{ on }[0,T],\\
    u(0)&=u_0
    \end{split}
\end{align}
in $\R^d$ for $d \in \N$. Here, $\{W(t)\}_{t\ge 0}$ is a square integrable $\K$-valued $Q$-Wiener process, $\K\in \{\R,\C\}$, with respect to a normal filtration $(\filtrF_t)_{t \ge 0}$, $V$ is a $\K$-valued potential, $u_0$ is an $\filtrF_0$-measurable random variable, and $T>0$. Pathwise uniform convergence of contractive time discretisation schemes is known for this equation for sufficiently regular $V$, $Q$, and $u_0$, and convergence rates are at hand (\cite[Thm.~6.12, 6.13]{KliobaVeraar23}, \cite[Thm.~5.5]{AC18}). We aim at relaxing the regularity conditions imposed on the potential $V$ as well as the covariance operator $Q$ and allowing for rough initial data $u_0$, while maintaining pathwise uniform convergence.

Let $\sigma \ge 0$ and write $L^2=L^2(\R^d)$, $L^\infty=L^\infty(\R^d)$, and $H^\sigma=H^\sigma(\R^d)$. We will also be using the Bessel potential spaces $H^{\sigma,q}=H^{\sigma,q}(\R^d)$, which coincide with the classical Sobolev spaces $W^{\sigma,q}(\R^d)$ if $\sigma\in \N$ and $q\in (1, \infty)$. For details on these spaces, we refer the interested reader to \cite{bergh2012interpolationBesselPotential,Tr1}.

We are concerned with covariance operators $Q\in \calL(L^2)$ of trace class. More precisely, we assume that for an orthonormal basis $(h_n)_{n \in \N}$ of $L^2$, the covariance operator decomposes as
\begin{equation}
\label{eq:QtraceClass}
    Q=\sum_{n \in \N} \lambda_n (h_n \otimes h_n)\quad\text{ with }\quad \sum_{n \in \N} \lambda_n =C_\lambda<\infty,\quad \sup_{n \in \N} \big(\|h_n\|_{L^\infty}+\|h_n\|_{H^{\sigma,d/\sigma}}\big) < \infty
\end{equation}
for some constant $C_\lambda\ge 0$. For $\sigma=0$, $H^{\sigma,d/\sigma}$ should be interpreted as $L^\infty$. The conditions \eqref{eq:QtraceClass} are equivalent to $Q^{1/2} \in \calL(L^2,L^\infty\cap H^{\sigma,d/\sigma})$. While the last condition constitutes an additional regularity assumption on $Q$, still a wide range of operators is covered due to the Sobolev index of $H^{\sigma,d/\sigma}$ being $0$. In particular, $H^{\sigma,d/\sigma}$-regularity does not result in any Hölder regularity, not even continuity.

The following theorem on the linear Schrödinger equation covers among others the case $\sigma=0$, for which $H^{\sigma,d/\sigma}$ should be interpreted as $L^\infty$. More general nonlinearities can be treated when restricting considerations to only the case $\sigma=0$, see Theorem \ref{thm:NLSchrödinger}.

\begin{theorem}
\label{thm:linearSchrödinger}
    Let $d \in \N$, $\sigma \in [0,\frac{d}{2})$, and $p \in [2,\infty)$. Assume that $V \in L^\infty\cap H^{\sigma,\frac{d}{\sigma}}$ and $u_0 \in L_{\calF_0}^p(\Omega;H^\sigma)$. Suppose that the covariance operator $Q \in \calL(L^2)$ satisfies \eqref{eq:QtraceClass}. Let $(R_{k})_{k>0}$ be a time discretisation scheme that is contractive on $H^\sigma$ and $H^{\sigma+2}$. Assume $R$ approximates $S$ to some order $\alpha \in (0,1]$ on $H^{\sigma+2}$. Denote by $U$ the mild solution of the linear stochastic Schrödinger equation with multiplicative noise \eqref{eq:linearSchroedingerPotentialMultNoise}
    and by $(U^j)_{j=0,\ldots,N_k}$ the temporal approximations as defined in \eqref{eq:defUj}.   
    Define the piecewise constant extension $\tilde{U}:[0,T] \to L^p(\Omega;X)$ by $\tilde{U}(t) \ce U^j$ for $t \in [t_j,t_{j+1})$, $0 \le j \le N_k-1$, and $\tilde{U}(T) \ce U^{N_k}$. Then
    \begin{align}
        \lim_{k \to 0} \bigg\| \sup_{t \in [0,T]} \|U(t)-\tilde{U}(t)\|_{H^\sigma}\bigg\|_p =0.
    \end{align}
\end{theorem}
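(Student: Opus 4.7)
The plan is to realise \eqref{eq:linearSchroedingerPotentialMultNoise} as an instance of \eqref{eq:StEvolEqnFG} and then invoke Theorem \ref{thm:convergence}. The natural choices are $X=H^\sigma$, generator $A=-\iu\Delta$ with domain $D(A)=H^{\sigma+2}$, nonlinearity $F(u)\ce -\iu V u$, and, after writing $W=Q^{1/2}W_H$ for an $L^2$-cylindrical Brownian motion $W_H$, noise $G\colon H^\sigma\to\gamma(L^2,H^\sigma)$ defined by $G(u)h\ce -\iu u\cdot Q^{1/2}h$.

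For the semigroup, $-\iu\Delta$ is skew-adjoint on $L^2$ with domain $H^2$ and the Schrödinger group $S(t)=\ee^{-\iu t\Delta}$, acting as a Fourier multiplier of modulus one, is unitary on every $H^\sigma$ and therefore contractive. Since $S(t)$ commutes with $A$, it preserves the graph norm and hence is a $C_0$-contraction (in fact unitary) group on $H^{\sigma+2}$ as well. The contractivity and approximation-order assumptions placed on $R_k$ in the statement are therefore precisely what Theorem \ref{thm:convergence} requires.

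Since $F$ and $G$ are autonomous, the Hölder-in-time condition \ref{item:FGconv_Hölder} of Assumption \ref{ass:FG_conv} holds trivially. For Lipschitz continuity and linear growth I would rely on the Sobolev product inequality
\[
\|fg\|_{H^\sigma}\lesssim \|f\|_{L^\infty}\|g\|_{H^\sigma}+\|f\|_{H^{\sigma,d/\sigma}}\|g\|_{L^{2d/(d-2\sigma)}},
\]
combined with the Sobolev embedding $H^\sigma\hookrightarrow L^{2d/(d-2\sigma)}$, valid precisely because $\sigma<d/2$; in the boundary case $\sigma=0$ the Bessel-potential summand collapses, with $H^{0,\infty}$ interpreted as $L^\infty$. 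Applied with $f=V$, $g=u_1-u_2$ this immediately yields Lipschitz continuity of $F$ with constant controlled by $\|V\|_{L^\infty\cap H^{\sigma,d/\sigma}}$. For $G$, since $H^\sigma$ is Hilbert one has $\gamma(L^2,H^\sigma)=\calL_2(L^2,H^\sigma)$; using the eigenbasis $(h_n)$ of $Q$ with eigenvalues $(\lambda_n)$ from \eqref{eq:QtraceClass}, so that $Q^{1/2}h_n=\sqrt{\lambda_n}\,h_n$, the same product estimate gives
\[
\|G(u_1)-G(u_2)\|_{\gamma(L^2,H^\sigma)}^2=\sum_n\lambda_n\|h_n(u_1-u_2)\|_{H^\sigma}^2\lesssim \Big(\sup_n\bigl(\|h_n\|_{L^\infty}+\|h_n\|_{H^{\sigma,d/\sigma}}\bigr)\Big)^{\!2}\Big(\sum_n\lambda_n\Big)\|u_1-u_2\|_{H^\sigma}^2,
\]
and both factors are finite by \eqref{eq:QtraceClass}. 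Together with $u_0\in L^p_{\calF_0}(\Omega;H^\sigma)$ this verifies Assumption \ref{ass:FG_conv} in full, so Theorem \ref{thm:convergence} yields the claimed pathwise uniform convergence.

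The main obstacle is really the verification of the $\gamma$-radonifying bound for $G$: the Hilbert--Schmidt sum only converges because each $h_n$ can be inserted into the product estimate with uniform control in both $L^\infty$ and $H^{\sigma,d/\sigma}$, which is exactly the content of the slightly technical assumption \eqref{eq:QtraceClass}. Everything else is semigroup bookkeeping already encoded in Theorem \ref{thm:convergence}.
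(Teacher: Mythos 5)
Your proposal is correct and follows essentially the same route as the paper: the same identifications $X=H^\sigma$, $F(u)=-\iu Vu$, $G(u)=-\iu M_uQ^{1/2}$, the same Sobolev product estimate combined with the embedding $H^\sigma\hookrightarrow L^{2d/(d-2\sigma)}$ to verify Lipschitz continuity of $F$ and the Hilbert--Schmidt bound for $G$ via \eqref{eq:QtraceClass}, followed by an application of Theorem \ref{thm:convergence}. The only cosmetic difference is that you verify contractivity of the Schrödinger group on $H^\sigma$ and $H^{\sigma+2}$ directly via unitarity of the Fourier multiplier, where the paper cites \cite[Lemma~2.1]{AC18}.
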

\begin{proof}
    Let $X \ce H^\sigma$. The semigroup generated by $A= -\iu \Delta$ is contractive on both $X$ and $D(A)=H^{\sigma+2}$ \cite[Lemma~2.1]{AC18}. We claim that Assumption \ref{ass:FG_conv} is satisfied for $F(\omega,t,u) \ce -\iu V u$ and $G(\omega,t,u) \ce -\iu M_u Q^{1/2}$ with $M_u$ denoting the multiplication operator associated to $u$. At first, we show Lipschitz continuity of $F$ on $X$. Let $q_1 = \frac{2d}{d-2\sigma}$ and $q_2=\frac{d}{\sigma}$. Then $\frac{1}{q_1}+\frac{1}{q_2}=\frac{1}{2}$ and $q_1<\infty$ because $d>2\sigma$. By classical Sobolev and Bessel potential space embeddings \cite[Thm.~6.5.1]{bergh2012interpolationBesselPotential}, $H^\sigma$ embeds into $L^{q_1}$. Hence, an application of the product estimate \cite[Prop.~2.1.1]{TaylorBook} yields
    \begin{align}
    \label{eq:applyTaylor}
        \|F(u)\|_{H^\sigma}&=\|V\cdot u\|_{H^\sigma} \lesssim \|V\|_{H^{\sigma,q_2}}\|u\|_{L^{q_1}} + \|V\|_{L^\infty}\|u\|_{H^\sigma}\lesssim (\|V\|_{H^{\sigma,d/\sigma}}+\|V\|_{L^\infty})\|u\|_{H^\sigma}
    \end{align}
    for $u \in H^\sigma$, i.e., linear growth of $F$. Lipschitz continuity of $F$ follows from the above considerations, noting that $F$ is linear.
    
    Next, Lipschitz continuity and linear growth of $G$ are to be derived from the trace class condition of $Q$. Set $H=L^2$ and let $Q=\sum_{n \in \N} \lambda_n (h_n \otimes h_n)$ with $(h_n)_{n\in \N}$ and $\lambda_n$ as in \eqref{eq:QtraceClass}. Since $H^\sigma$ is a Hilbert space, it suffices to consider Hilbert-Schmidt norms. Using the product estimate from \eqref{eq:applyTaylor} in the inequality marked with $(*)$, we calculate
    \begin{align*}
        \|G(u)\|_{\calL_2(L^2,H^\sigma)}^2 &= \|M_u Q^{1/2}\|_{\calL_2(L^2,H^\sigma)}^2 = \sum_{n \in \N} \|u Q^{1/2}h_n\|_{H^\sigma}^2 = \sum_{n \in \N} \lambda_n \|u \cdot h_n\|_{H^\sigma}^2\\
        &\overset{(*)}{\lesssim} \sum_{n \in \N} \lambda_n (\|h_n\|_{H^{\sigma,d/\sigma}}+\|h_n\|_{L^\infty})^2\|u\|_{H^\sigma}^2 \\
        &\le C_\lambda \sup_{n \in \N} \big(\|h_n\|_{H^{\sigma,d/\sigma}}+\|h_n\|_{L^\infty}\big)^2 \|u\|_{H^\sigma}^2.
    \end{align*}
    Linearity of $G$ yields Lipschitz continuity of $G$. In conclusion, Assumption \ref{ass:FG_conv} is satisfied.
    Hence, Theorem \ref{thm:convergence} is applicable and yields the desired convergence statement.
\end{proof}

Note that the convergence can be arbitrarily slow. More precisely, in the general case, there is no $\alpha >0$ such that 
\[ \bigg\| \sup_{t \in [0,T]} \|U(t)-\tilde{U}(t)\|_{H^\sigma}\bigg\|_p \le k^\alpha.\]
Previous results yielding a convergence rate \cite[Theorem~6.3]{KliobaVeraar23} are not applicable in the setting of Theorem \ref{thm:linearSchrödinger}. Clearly, rough initial data prohibits an application of quantified results. However, even for smooth initial data, a convergence rate is out of reach due to the lack of regularity of the potential $V$ and the covariance $Q$. 
Since the embedding $H^\sigma \hra L^{\frac{2d}{d-2\sigma}}$ and the product estimate \cite[Prop.~2.1.1]{TaylorBook} are sharp, there is no $\tilde{\sigma}>\sigma$ such that $F$ or $G$ are mappings of linear growth on $H^{\tilde{\sigma}}$. Consequently, the smoother space required for a convergence rate cannot be found in the setting of this section.

To cover proper nonlinearities as in \eqref{eq:SchrödingerNL}, estimates of the form
\begin{equation}
\label{eq:LipschitzHsigma}
    \|\psi(u)-\psi(v)\|_{H^\sigma} \lesssim \|u-v\|_{H^\sigma}\quad (u,v \in H^\sigma)
\end{equation}
are required to show Lipschitz continuity of $G$. Estimates of this kind are out of reach for $\sigma >0$ in the general case, see \cite[p.~30]{KliobaVeraar23}. In particular, Nemytskij maps are not Lipschitz on $H^\sigma$ for any $\sigma>0$. 

\begin{theorem}
\label{thm:NLSchrödinger}
    Let $d \in \N$ and $p \in [2,\infty)$. Assume that $V \in L^\infty$  and $u_0 \in L_{\calF_0}^p(\Omega;L^2)$. Suppose that the covariance operator $Q \in \calL(L^2)$ satisfies \eqref{eq:QtraceClass}. Let $(R_{k})_{k>0}$ be a time discretisation scheme that is contractive on $L^2$ and $H^2$. Assume $R$ approximates $S$ to some order $\alpha \in (0,1]$ on $H^2$. Let $\phi,\psi\from \C\to \C$ be Lipschitz continuous and such that $\phi(0) = \psi(0) = 0$. Denote by $U$ the mild solution of the nonlinear stochastic Schrödinger equation with multiplicative noise \eqref{eq:SchrödingerNL}
    and by $(U^j)_{j=0,\ldots,N_k}$ the temporal approximations as defined in \eqref{eq:defUj}.   
    Define the piecewise constant extension $\tilde{U}\from [0,T] \to L^p(\Omega;X)$ by $\tilde{U}(t) \ce U^j$ for $t \in [t_j,t_{j+1})$, $0 \le j \le N_k-1$, and $\tilde{U}(T) \ce U^{N_k}$. Then
    \begin{align}
        \lim_{k \to 0} \bigg\| \sup_{t \in [0,T]} \|U(t)-\tilde{U}(t)\|_{L^2}\bigg\|_p =0.
    \end{align}
\end{theorem}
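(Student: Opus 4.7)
The plan is to deduce the theorem directly from the main result Theorem~\ref{thm:convergence} applied with $X = L^2$ and $D(A) = H^2$, for $A = -\iu\Delta$. The Schrödinger semigroup is unitary, hence contractive, on both $L^2$ and $H^2$, and contractivity of $(R_k)_{k>0}$ on both spaces together with its order-$\alpha$ approximation on $H^2$ is part of the hypothesis. Since $u_0 \in L_{\filtrF_0}^p(\Omega; L^2)$ is given, everything reduces to verifying Assumption~\ref{ass:FG_conv} for
\[
F(\omega,t,u) \ce -\iu(Vu + \phi(u)), \qquad G(\omega,t,u) \ce -\iu M_{\psi(u)} Q^{1/2}.
\]
Both $F$ and $G$ are autonomous, so the Hölder-in-time condition~\ref{item:FGconv_Hölder} of Assumption~\ref{ass:FG_conv} is trivially satisfied with vanishing Hölder seminorms for any exponent $\alpha\in(0,1]$.

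For the drift, the bound $\|Vu - Vv\|_{L^2} \le \|V\|_{L^\infty}\|u-v\|_{L^2}$ handles the potential term. For the Nemytskij part, Lipschitz continuity of $\phi$ on $\C$ gives the pointwise inequality $|\phi(z_1) - \phi(z_2)| \le L_\phi |z_1 - z_2|$, which integrates to $\|\phi(u) - \phi(v)\|_{L^2} \le L_\phi\|u-v\|_{L^2}$. Combined with $\phi(0) = 0$, this yields global Lipschitz continuity and linear growth of $F$ on $L^2$. This is precisely the step that fails on $H^\sigma$ for $\sigma > 0$, as noted in the remark following Theorem~\ref{thm:linearSchrödinger}, and is the reason the result is confined to the endpoint $\sigma=0$.

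For the diffusion, since $L^2$ is Hilbert the $\gamma$-norm coincides with the Hilbert-Schmidt norm, and the computation from the proof of Theorem~\ref{thm:linearSchrödinger}, using \eqref{eq:QtraceClass} with $\sigma = 0$ so that $H^{\sigma,d/\sigma}$ is read as $L^\infty$, delivers
\[
\|G(u) - G(v)\|_{\calL_2(L^2,L^2)}^2 = \sum_{n \in \N} \lambda_n \|(\psi(u)-\psi(v))\, h_n\|_{L^2}^2 \le C_\lambda \sup_{n\in\N}\|h_n\|_{L^\infty}^2 \cdot L_\psi^2 \|u-v\|_{L^2}^2,
\]
where we used the pointwise bound $|(\psi(u)-\psi(v))(x)| \le L_\psi |u(x) - v(x)|$ coming from Lipschitz continuity of $\psi$. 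Taking $v = 0$ and exploiting $\psi(0)=0$ yields the analogous linear growth bound.

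With all parts of Assumption~\ref{ass:FG_conv} verified for $X=L^2$, Theorem~\ref{thm:convergence} applies and gives the claimed pathwise uniform convergence. There is essentially no obstacle beyond choosing the right function-space framework: the only genuinely noteworthy point, and the reason more regular versions are out of reach, is that Nemytskij composition maps induced by general Lipschitz $\phi$ and $\psi$ are only Lipschitz on $L^p$-type spaces, forcing the choice $\sigma=0$ and in particular precluding a convergence rate.
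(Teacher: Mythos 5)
Your proposal is correct and follows essentially the same route as the paper: verify Assumption \ref{ass:FG_conv} for $F(u)=-\iu(Vu+\phi(u))$ and $G(u)=-\iu M_{\psi(u)}Q^{1/2}$ on $X=L^2$ (using $\|V\|_{L^\infty}$, the pointwise Lipschitz bounds on $\phi,\psi$, and the trace-class condition \eqref{eq:QtraceClass} with $\sigma=0$), and then invoke Theorem \ref{thm:convergence}. Your explicit remarks that the time-Hölder condition is vacuous for autonomous coefficients and that the Nemytskij obstruction forces $\sigma=0$ are consistent with the paper's discussion and add nothing that would change the argument.
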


Naturally, the result extends to non-vanishing $\sigma$ in specific cases where Lipschitz continuity on $H^\sigma$ is known. 

\begin{proof}
    We show that Theorem \ref{thm:convergence} is applicable with $F(\omega,t,u) \ce -\iu (V u+\phi(u))$ and $G(\omega,t,u) \ce -\iu M_{\psi(u)} Q^{1/2}$ on $X=L^2$. Analogously to the proof of Theorem \ref{thm:linearSchrödinger}, we obtain the bound
    \begin{equation*}
        \|G(u)-G(w)\|_{\calL_2(L^2,L^2)} \lesssim \sqrt{2C_\lambda} \sup_{n \in \N} \|h_n\|_{L^\infty} \|\psi(u)-\psi(w)\|_{L^2} \le \sqrt{2C_\lambda} C_\psi \sup_{n \in \N} \|h_n\|_{L^\infty} \|u-w\|_{L^2}
    \end{equation*}
    for $u,w \in L^2$, from which we can deduce Lipschitz continuity of $G$.  
    Linear growth of $G$ follows from $G(0)=0$.
    In the same way, one can see that $F(u) = -\iu(Vu+\phi(u))$ is Lipschitz and of linear growth on $L^2$. The statement directly follows from Theorem \ref{thm:convergence}.
\end{proof}

\subsubsection*{Data availability statement} Data sharing is not applicable to this article, as no datasets were created or analysed as part of the current study.

\subsubsection*{Conflict of interest statement} The second author is supported by the VICI subsidy VI.C.212.027 of the Netherlands Organisation for Scientific Research (NWO). The first author received funding by the German Academic Exchange Service (DAAD) to conduct research at Delft University of Technology for six months in 2022. The authors have no further conflicts of interest to declare.


\end{document}